\def\blfootnote{\gdef\@thefnmark{}\@footnotetext}
\tikzset{
    partial ellipse/.style args={#1:#2:#3}{
        insert path={+ (#1:#3) arc (#1:#2:#3)}
    }
}
\numberwithin{equation}{section} % Number equations within sections (i.e. 1.1, 1.2, 2.1, 2.2 instead of 1, 2, 3, 4)
\numberwithin{figure}{section} % Number figures within sections (i.e. 1.1, 1.2, 2.1, 2.2 instead of 1, 2, 3, 4)
\numberwithin{table}{section} % Number tables within sections (i.e. 1.1, 1.2, 2.1, 2.2 instead of 1, 2, 3, 4)
\newcommand{\into}{\hookrightarrow}
\newcommand{\Hom}{\text{Hom}}
\newcommand{\Ker}{\text{Ker}}
\newcommand{\Tr}{\operatorname{Tr}}
\newcommand{\Sym}{\text{Sym}}
\DeclareMathOperator{\ob}{Ob}
\newcommand{\mf}[1]{\mathfrak{#1}}
\newcommand{\mc}[1]{\mathcal{#1}}
\newcommand{\ZZ}{\mathbb{Z}}
\newcommand{\RR}{\mathbb{R}}
\newcommand{\NN}{\mathbb{N}}
\newcommand{\KK}{\mathbb{K}}
\newcommand{\QQ}{\mathbb{Q}}
\newcommand{\WW}{\mathbb{W}}
\newcommand{\Strat}{\mathbb{S}}
\newcommand{\kk}{\Bbbk}
\newcommand{\lam}[1]{\lambda^{(#1)}}
\newcommand{\lamtild}{\tilde{\lambda}}
\newcommand{\mutild}{\tilde{\mu}}
\newcommand{\g}{\mathfrak{g}}
\newcommand{\h}{\mathfrak{h}}
\newcommand{\gtild}{\tilde{\mathfrak{g}}}
\newcommand{\UEAdotZg}{\dot{U}^{\ZZ}(\mf{g})}
\newcommand{\UEAdotZgtild}{\dot{U}^{\ZZ}(\gtild)}
\newcommand{\QGdotZ}{\dot{U}_q^{\ZZ}}
\newcommand{\VZq}[1]{V_q^{\ZZ}(#1)}
\newcommand{\VZqulam}{V_q^{\ZZ}(\ulambda)}
\newcommand{\VZ}[1]{V^{\ZZ}(#1)}
\newcommand{\VZulam}{V^{\ZZ}(\ulambda)}
\newcommand{\VZlamtild}{V^{\ZZ}(\lamtild)}
\newcommand{\curr}{U(\mf{g}[t])}
\newcommand{\currdot}{\dot{U}(\mf{g}[t])}
\newcommand{\currk}{U_{\kk}(\g[t])}
\newcommand{\currdotk}{\dot{U}_{\kk}(\g[t])}
\newcommand{\currK}{U_{\KK}(\g[t])}
\newcommand{\currdotK}{\dot{U}_{\KK}(\g[t])}
\newcommand{\currtild}{U_{\KK}(\gtild[t])}
\newcommand{\currdottild}{\dot{U}_{\KK}(\gtild[t])}
\newcommand{\WZtild}{W_{\KK}^{Z}(\lamtild)}
\newcommand{\Wulam}{W(\ulambda)}
\newcommand{\Wulamk}{W_{\kk}(\ulambda)}
\newcommand{\WWulam}{\WW(\ulambda)}
\newcommand{\WWulamk}{\WW_{\kk}(\ulambda)}
\newcommand{\bA}{\mathbb{A}}
\newcommand{\ATP}{\mathbb{A}_{\ulambda}}
\newcommand{\Rtild}{\tilde{R}}
\newcommand{\Rhat}{\hat{R}}
\newcommand{\Rtildhat}{\hat{\tilde{R}}}
\newcommand{\varep}{\varepsilon}
\newcommand{\catQG}{\mc{U}}
\newcommand{\catQGdot}{\dot{\mc{U}}}
\newcommand{\catQGk}{\dot{\mc{U}}_{\kk}(\g)}
\newcommand{\catQGstar}{\mc{U}^*}
\newcommand{\catQGdotstar}{\dot{\mc{U}}^*}
\newcommand{\catQGstark}{\dot{\mc{U}}_{\kk}^*(\g)}
\newcommand{\catQGstarK}{\dot{\mc{U}}_{\KK}^*(\g)}
\newcommand{\catQGstarKtild}{\dot{\mc{U}}_{\KK}^*(\gtild)}
\newcommand{\E}{\mc{E}}
\newcommand{\cyclo}[1]{\mc{U}^{#1}}
\newcommand{\cyclolam}{\mc{U}^{\lambda}}
\newcommand{\cycloulam}{\mc{U}^{\ulambda}}
\newcommand{\cyclolamk}{\mc{U}_{\kk}^{\lambda}(\g)}
\newcommand{\defcyclolam}{\check{\mc{U}}^{\lambda}}
\newcommand{\defcyclolamk}{\check{\mc{U}}_{\kk}^{\lambda}(\g)}
\newcommand{\cyclostar}[1]{\mc{U}^{#1,*}}
\newcommand{\cyclolamstar}{\mc{U}^{\lambda,*}}
\newcommand{\cycloulamstar}{\mc{U}^{\ulambda,*}}
\newcommand{\cyclolamstark}{\mc{U}_{\kk}^{\lambda,*}(\g)}
\newcommand{\defcyclolamstar}{\check{\mc{U}}^{\lambda,*}}
\newcommand{\cyclostartild}{\mc{U}_{\KK}^{\lamtild,*}(\gtild)}
\newcommand{\TPaux}{\widetilde{\mc{X}}^{\ulambda}}
\newcommand{\TPauxstar}{\widetilde{\mc{X}}^{\ulambda,*}}
\newcommand{\TP}{\mc{X}^{\ulambda}}
\newcommand{\TPstar}{\mc{X}^{\ulambda,*}}
\newcommand{\defTP}{\check{\mc{X}}^{\ulambda}}
\newcommand{\defTPstar}{\check{\mc{X}}^{\ulambda,*}}
\newcommand{\defTPlam}{\check{\mc{X}}^{\lambda}}
\newcommand{\defTPlamstar}{\check{\mc{X}}^{\lambda,*}}
\newcommand{\genTP}{\mc{G}^{\ulambda}}
\newcommand{\TrTP}{\Tr(\mc{X}^{\ulambda})}
\newcommand{\TrTPstar}{\Tr(\mc{X}^{\ulambda,*})}
\newcommand{\TrdefTP}{\Tr(\check{\mc{X}}^{\ulambda})}
\newcommand{\TrdefTPstar}{\Tr(\check{\mc{X}}^{\ulambda,*})}
\newcommand{\TrgenTP}{\Tr(\mc{G}^{\ulambda})}
\newcommand{\TrZgenTP}{\Tr^{Z}(\mc{G}^{\ulambda})}
\newcommand{\Itild}{\tilde{I}}
\newcommand{\Xtild}{\tilde{X}}
\newcommand{\eptild}{\tilde{\varepsilon}}
\newcommand{\ytild}{\tilde{y}}
\newcommand{\psitild}{\tilde{\psi}}
\newcommand{\vtild}{\tilde{v}}
\newcommand{\EndQ}{\TPstar(Q_m,Q_m)}
\newcommand{\EndP}{\cycloulamstar(P_{\nu},P_{\nu})}
\newcommand{\ulambda}{\underline{\lambda}}
\newcommand{\uz}{\underline{z}}
\newcommand{\ui}{\underline{i}}
\newcommand{\uj}{\underline{j}}
\theoremstyle{definition}
\theoremstyle{remark}
\newtheorem{remark}{Remark}[section]
\theoremstyle{plain}
\newtheorem{definition}[remark]{Definition}
\newtheorem{corollary}[remark]{Corollary}
\newtheorem{proposition}[remark]{Proposition}
\newtheorem{lemma}[remark]{Lemma}
\newtheorem{theorem}[remark]{Theorem}
\newtheorem{introtheorem}{Theorem}
\newtheorem{introprop}[introtheorem]{Proposition}
\tikzset{wei/.style={draw=red,double=red!40!white,double distance=1.5pt,thin}}
\tikzset{bdot/.style={fill,circle,color=blue,inner sep=3pt,outer sep=0}}
\tikzset{dir/.style={postaction={decorate,decoration={markings,
    mark=at position .8 with {\arrow[scale=1.3] {}}}}}}
\tikzset{rdir/.style={postaction={decorate,decoration={markings,
    mark=at position .8 with {\arrow[scale=1.3]{>}}}}}}
\tikzset{edir/.style={postaction={decorate,decoration={markings,
    mark=at position .2 with {\arrow[scale=1.3] {}}}}}}
\begin{document}

\blfootnote{2010 \textit{Mathematics Subject Classification}: 17B10.

\textit{Key words and phrases}: current algebra, categorifed quantum group, tensor product algebra}

\title{Trace decategorification of tensor product algebras}
\author{Christopher Leonard and Michael Reeks}
\maketitle

\begin{abstract}
    We show that in ADE type the trace of Webster's categorification of a tensor product of irreducibles for the quantum group is isomorphic to a tensor product of Weyl modules for the current algebra $\dot{U}(\mf{g}[t])$. This extends a result of Beliakova, Habiro, Lauda, and Webster who showed that the trace of the categorified quantum group $\dot{\mc{U}}^*(\mf{g})$ is isomorphic to $\dot{U}(\mf{g}[t])$, and the trace of a cyclotomic quotient of $\dot{\mc{U}}^*(\mf{g})$, which categorifies a single irreducible for the quantum group, is isomorphic to a Weyl module for $\dot{U}(\mf{g}[t])$. We use a deformation argument based on Webster's technique of unfurling 2-representations.
\end{abstract}

\tableofcontents

\section{Introduction}

Categorification is the process of enriching algebraic objects by increasing their categorical dimension by one, for example by passing from a set to a category. Many categories have been constructed whose split Grothendieck groups are, by design, isomorphic to important objects in Lie theory. Recently  following \cite{BGHL14}, there has been an effort to determine the trace decategorifications of these categories. For example, the traces of the Heisenberg category, diagrammatic Hecke category, and categorified quantum group have been identified with the W-algebra $W_{1+\infty}$ (\cite{CLLS18}), the semidirect product of the Weyl group and a polynomial algebra (\cite{EL16}), and the corresponding current algebra (\cite{BHLZ14} and \cite{BHLW17}), respectively.

The \emph{trace} of a $\kk$-linear category $\mc{C}$, denoted $\Tr(\mc{C})$, is the $\kk$-vector space
\begin{equation*}
	\Tr(\mathcal{C})= \bigg( \bigoplus_{x\in \ob(\mathcal{C})} \mc{C}(x,x) \bigg) \big/\operatorname{span}_{\Bbbk}\{fg-gf\}, 
\end{equation*}

\noindent with the span taken over all $f:x\rightarrow y$ and $g: y\rightarrow x$ for $x,y\in \ob(\mc{C})$. The trace of a category is invariant under passage to the Karoubi envelope, which makes it particularly convenient for diagrammatic categories.

The trace and split Grothendieck group are related by the \emph{Chern character map} $h_{\mc{C}}: K^{\kk}_0(\mc{C}) \rightarrow \Tr(\mc{C})$ which sends the isomorphism class of an object to the trace class of the identity morphism on that object. It is often injective but not surjective. In particular, if $\mc{C}$ is a graded category and $\mc{C}^*$ is the category obtained from $\mc{C}$ by enlarging morphism spaces to include morphisms of non-zero degree, then $\Tr(\mc{C}^*)$ is a graded vector space and the image of $h_{\mc{C}^*}$ is concentrated in degree zero, so the trace of $\mc{C}^*$ is considerably richer than its Grothendieck group.

Higher representation theory is concerned with categorifying Lie algebras (and related algebras) and their representation theory. For a given Cartan datum, Rouquier \cite{Rou08} and Khovanov-Lauda \cite{KL10} independently constructed a graded 2-category, the \emph{categorified quantum group} $\catQGdot=\catQGdot(\g)$, whose split Grothendieck group $K_0(\catQGdot)$ is isomorphic to the corresponding quantum group $\QGdotZ=\QGdotZ(\g)$. Khovanov and Lauda's presentation of $\catQGdot$ is diagrammatic; the 2-morphism spaces are spanned by oriented string diagrams.

The deformed and undeformed \emph{cyclotomic quotients}, denoted $\defcyclolam$ and $\cyclolam$ respectively, are graded 2-representations of $\catQGdot$ associated to a given dominant weight $\lambda$. Kang-Kashiwara \cite{KK12} and Webster \cite{Web17} independently showed that $K_0(\defcyclolam)$ and $K_0(\cyclolam)$ are both isomorphic to the highest weight module for $\QGdotZ$ of weight $\lambda$. In addition, Webster constructed a graded 2-representation $\TP$ of $\catQGdot$ for any sequence $\ulambda=(\lam{1},\ldots,\lam{n})$ of dominant weights whose split Grothendieck group is isomorphic to the tensor product of highest weight modules with these weights (actually in \cite{Web17} Webster worked with the \emph{tensor product algebra} $T^{\ulambda}$, but it will be convenient for us to work with the category $\TP\cong T^{\ulambda}~\text{-pmod}$ defined in \cite{Web16}). Morphisms spaces in $\TP$ are spanned by string diagrams containing red strings that separate the tensor factors.

The trace of $\catQGdotstar$ is a graded algebra and graded 2-representations of $\catQGdotstar$ become graded representations of $\Tr(\catQGdotstar)$. In \cite{BHLZ14} the authors showed that $\Tr(\catQGdotstar(\mf{sl}_2))$ is isomorphic to (the idempotent form of) the \emph{current algebra} $\dot{U}(\mf{sl}_2[t])$ with the indeterminate $t$ in degree 2. The analogous statement for $\mf{sl}_3$ was proved in \cite{Ziv14} and this was extended to any $\g$ of ADE type in \cite{BHLW17}, where the authors also identified $\Tr(\defcyclolamstar)$ and $\Tr(\cyclolamstar)$ with global $\WW(\lambda)$ and local $W(\lambda)$ Weyl modules for $\currdot$ respectively.

In this paper we extend the results of \cite{BHLW17} to the starred tensor product categorification $\TPstar$ and its deformed counterpart $\defTPstar$ defined in \cite{Web12}. Our main theorem is:

\begin{introtheorem}\label{thm:introtheorem}
    Take $\g$ of ADE type and a sequence $\ulambda=(\lam{1},\ldots,\lam{n})$ of dominant weights. There are isomorphisms of graded $\currdot$-algebras
    \begin{align}\label{eq:introtheorem}
        \begin{split}
            \WWulam:=\WW(\lam{1})\otimes\cdots\otimes\WW(\lam{n})&\longrightarrow \Tr(\defTPstar) \\
            \Wulam:=W(\lam{1})\otimes\cdots\otimes W(\lam{n})&\longrightarrow \Tr(\TPstar).
        \end{split}
    \end{align}
\end{introtheorem}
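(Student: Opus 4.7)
The plan is to construct the maps in~(\ref{eq:introtheorem}) from the universal property of global and local Weyl modules, and then to prove they are isomorphisms via Webster's unfurling technique, reducing to the cyclotomic situation already handled in \cite{BHLW17}.

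\textbf{Constructing the maps.} Because $\defTPstar$ is a 2-representation of $\catQGdotstar$, its trace $\Tr(\defTPstar)$ is automatically a graded module for $\Tr(\catQGdotstar)\cong\currdot$. To produce the homomorphism $\WWulam\to\Tr(\defTPstar)$ I would identify a distinguished element sitting in the top weight $\lam{1}+\cdots+\lam{n}$, namely the trace class of the identity on the highest weight object of $\defTPstar$ (the string of labelled red lines with no black strands attached). Using the universal property of the global Weyl module, it then suffices to verify that this class is cyclic, lies in the correct weight, and satisfies the defining polynomial and annihilation relations of $\WWulam$. These checks should reduce to diagrammatic bubble-slide and dot-migration computations in the style of \cite{BHLW17,Web12}. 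The map out of $\Wulam$ in the undeformed case is built by the same recipe.

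\textbf{Unfurling and generic decomposition.} The main technical ingredient is Webster's unfurling. Formally adjoining indeterminates $z^{(k)}$ to the $k$th red string produces a generic 2-representation, and specializing the $z^{(k)}$ to sufficiently distinct scalars yields an equivalence (up to Morita equivalence) with the external tensor product of single-weight deformed cyclotomic quotients $\defcyclostar{\lam{1}}\boxtimes\cdots\boxtimes\defcyclostar{\lam{n}}$. Since traces of $\kk$-linear categories are multiplicative under external tensor product, and $\Tr(\defcyclolamstar)\cong\WW(\lambda)$ by \cite{BHLW17}, this identifies $\Tr(\defTPstar)$ at generic parameters with $\WW(\lam{1})\otimes\cdots\otimes\WW(\lam{n})$. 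The hard part will be showing that the homomorphism of the previous step is compatible with this decomposition: concretely, one needs enough flatness of $\Tr(\defTPstar)$ in the deformation parameters to propagate the generic-fiber isomorphism back to the original trace, together with a graded-character comparison to exclude any remaining kernel.

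\textbf{Undeformed case and equivariance.} Setting all $z^{(k)}$ to zero on the categorical side specializes $\defTPstar$ to $\TPstar$, while on the algebraic side this same specialization sends $\WW(\lam{k})$ to the local Weyl module $W(\lam{k})$. Compatibility of trace with this specialization yields the second isomorphism in~(\ref{eq:introtheorem}) from the first. Throughout, $\currdot$-equivariance is automatic: the maps are constructed from the $\catQGdotstar$-action on each side, so they intertwine the induced $\Tr(\catQGdotstar)\cong\currdot$-actions by functoriality of trace.
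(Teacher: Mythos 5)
Your overall roadmap — deform, unfurl to reduce to a cyclotomic quotient, invoke \cite{BHLW17}, and then descend via flatness — is the right one and matches the strategy of the paper. However, there are three places where the proposal has a genuine gap or misidentifies a key object, and these are exactly where the hard work lives.

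\textbf{The generic fiber is not an external tensor product of deformed cyclotomic quotients.} Webster's unfurling does not present $\defTPstar\otimes_{\ATP}\KK$ as $\defcyclostar{\lam{1}}\boxtimes\cdots\boxtimes\defcyclostar{\lam{n}}$. Rather, the idempotent completion $\genTP$ carries a 2-representation of the categorified quantum group $\catQGstarKtild(\gtild)$ for the larger Lie algebra $\gtild=\g^{\oplus Z}$, and is equivalent to a single \emph{undeformed} cyclotomic quotient $\cyclostartild$ at the weight $\lamtild=\sum_{i,z}\Lambda_{(i,z)}$ (Theorem~\ref{Thm:GenTPEquiv}). The indexing set $Z$ is not just $\{1,\ldots,n\}$; there is one deformation variable for each pair $(i,k)$ and each copy of $\Lambda_i$ appearing in $\lam{k}$. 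Correspondingly, the algebraic object at the generic point is the $Z$-\emph{twisted} local Weyl module $W^Z_{\KK}(\lamtild)$ for $\currdottild$, which is then matched with $\WWulam\otimes_{\ATP}\KK$ via \cite[Corollary~6]{CFK10}. The twist by the automorphisms $\sigma_z$ of~\eqref{eq:TwistedAction} is not cosmetic; without it the $\currdottild$-actions on the two sides disagree, and you would not be able to compare the map $\varphi$ with the generic isomorphism.

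\textbf{The upper bound is not supplied.} Upper semi-continuity of dimension gives $\dim_\kk\Tr(\defTPstar)\otimes_{\ATP}\kk\geq\dim_\KK\Tr(\defTPstar)\otimes_{\ATP}\KK$, so the generic computation alone only gives a \emph{lower} bound on $\dim_\kk\TrTPstar$. You need the complementary upper bound $\dim_\kk\TrTPstar\leq\dim_\kk\Wulam$ before you can conclude freeness, and ``a graded-character comparison to exclude any remaining kernel'' is not a proof of it. The paper gets this from the standardly stratified structure: $\TrTPstar$ is spanned by diagrams with no red–black crossings (Lemma~\ref{lem:TraceSpanning}), one decomposes $\TrTPstar$ by the composition $\nu$ recording how many black strings sit between consecutive red strings, and constructs surjections $\Tr(\cycloulamstar)_\nu\twoheadrightarrow\TrTPstar_{\leq\nu}/\TrTPstar_{<\nu}$ from the associated graded of this filtration. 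That reduces the bound to the cyclotomic trace computation of \cite{BHLW17}. This step is essential and not an afterthought.

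\textbf{The map cannot be built from a naive universal property of $\WWulam$.} The tensor product $\WWulam$ is not presented by a single generator and relations; it is not obviously cyclic as a left $\currdot$-module. The paper instead constructs $\varphi$ on the Verma-like module $M(\ulambda)=\bigotimes_k M(\lam{k})$, by induction on $n$ using Frobenius reciprocity for the parabolic $\mf{p}=\mf{h}\oplus\mf{n}[t]$ (Lemma~\ref{lemma:map}), and then shows $\varphi$ is a map of $(\currdot,\Pi^{\otimes n})$-bimodules using the bubble-slide identity of Proposition~\ref{prop:SymFuncTPC}. The descent from $M(\ulambda)$ to $\WWulam$ is then obtained after the fact by comparing kernels at the generic point using freeness. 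This is subtler than verifying that the empty diagram ``satisfies the defining relations of $\WWulam$'': one has to show two different maps out of $M(\ulambda)\otimes_{\Pi^{\otimes n}}\ATP$ have the same kernel.

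The equivariance argument in your last paragraph is fine, and your reading of how the undeformed isomorphism follows from the deformed one by applying $\otimes_{\ATP}\kk$ is also correct (given the flatness you have to establish).
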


\noindent These maps are uniquely characterized by the fact that they commute with the action of $\Tr(\catQGdotstar)\cong\currdot$, and that adding an additional tensor factor in $\WWulam$ or $\Wulam$ and taking its cyclic vector corresponds to drawing an additional red string on the left of a diagram.

The algebra $\Sym$ of symmetric functions is central to how we relate $\currdot$ and $\catQGdotstar$ and their (2-)representations. Let $\Pi=\bigotimes_{i\in I}\Sym$, where $I$ is the indexing set for simple roots of $\g$. This is a graded Hopf algebra with $p_{i,r}$, the $r$th power-sum symmetric function in the $i^{\text{th}}$ copy of $\Sym$, in degree $2r$ and coproduct $\delta$ given by
\begin{equation}
    \delta(p_{i,r})=p_{i,r}\otimes 1+1\otimes p_{i,r}.
\end{equation}

For $\mu\in X$, there is a commutative diagram of graded algebras:
\begin{equation}
    \begin{tikzcd}
          & \Pi \ar[ld] \ar[rd, "b_{\mu}"] &  \\ 
        U(\h[t]) \ar[rr] &  & \catQGdotstar(1_{\mu},1_{\mu})
    \end{tikzcd}
\end{equation}

\noindent The left diagonal map (a map of Hopf algebras) sends $p_{i,r}\mapsto \xi_i\otimes t^r$ and the map $b_{\mu}$ sends complete homogeneous (resp.\ elementary) symmetric functions to clockwise (resp.\ counter-clockwise) bubbles. The horizontal map agrees with the isomorphism $\currdot\cong \Tr(\catQGstar)$ after passing to the trace.

In \cite{CFK10} the authors showed that each global Weyl module $\WW(\lam{k})$ carries a graded right action of $\Pi$ and this action factors through the surjection ${a_k:\Pi\to \bA_{\lam{k}}=\bigotimes_{i\in I}\Sym_{\langle i,\lam{k}\rangle}}$, where $\Sym_m$ denotes the algebra of symmetric polynomials in $m$ variables. So the tensor product $\WWulam$ is naturally a graded $(\currdot,\ATP)$-bimodule, where $\ATP=\bigotimes_{k=1}^n\bA_{\lam{k}}$. If we consider $\ATP$ as a subalgebra of a polynomial algebra $\kk[Z]$ in a set of indeterminates $Z$ then $\Wulam$ is obtained from $\WWulam$ by tensoring with the left $\ATP$-module $\kk$ on which all $z\in Z$ act as zero.

The deformed tensor product categorification $\defTPstar$ is obtained by deforming the defining relations in $\TPstar$ over $\ATP$ so that a dot on a string, rather than being nilpotent, has spectrum contained in the set of indeterminates $Z$. It is enriched over graded right $\ATP$-modules and the undeformed category $\TPstar$ is obtained from $\defTPstar$ by tensoring morphism spaces with the left $\ATP$-module $\kk$ as above. The spectrum of a dot is determined by the action of bubbles, so it is important for us to understand how these interact with red strings in $\defTPstar$. In particular we show the following:
\begin{introprop}\label{prop:introprop}
    Take $\mu\in X$ and $1\leq k\leq n$. Recall the coproduct $\delta$ on $\Pi$, the map $b_{\mu}$ sending elements of $\Pi$ to bubbles, and the projection $a_k:\Pi\to \bA_{\lam{k}}\leq \ATP$. If $f\in \Pi$ and $\delta(f)=\sum_s g_s\otimes g_s'$ then in $\defTPstar$:
        \begin{equation}
        \begin{tikzpicture}[very thick, baseline=(current bounding box.center)]
            \draw[rounded corners] (-2.25,.3) rectangle (-.3,-.3) node[midway]{$b_{\mu+\lambda^{(k)}}(f)$};
            \draw[wei] (0,.5)--(0,-.5);
            \node at (0,-.7){$(k)$};
            \node[scale=1.3] at (.5,.5){$\mu$};
        \end{tikzpicture}
        \ \ = \ 
        \ 
        \begin{tikzpicture}[very thick, baseline=(current bounding box.center)]
            \draw[wei] (-.5,.5)--(-.5,-.5);
            \node[scale=1.4] at (-3,-.3){$\displaystyle \sum_s $};
            \draw[rounded corners] (-2.1,.3) rectangle (-.65,-.3) node[midway]{$a_k(g_s')$};
            \node at (-.5,-.7){$(k)$};
            \draw[rounded corners] (0,.3) rectangle (1.05,-.3) node[midway]{$b_\mu(g_s)$};
            \node[scale=1.3] at (1.4,.5){$\mu$};
        \end{tikzpicture} 
    \end{equation}
\end{introprop}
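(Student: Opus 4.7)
The plan is to reduce the identity to algebra generators of $\Pi$ and then invoke an explicit bubble-slide relation in $\defTPstar$. I will first verify that if the claimed identity holds for $f_1$ and $f_2$, then it holds for $f_1 f_2$. This is essentially formal: $b_\mu$ and $b_{\mu+\lam{k}}$ are algebra homomorphisms (bubbles in a common region compose by multiplication in $\Sym$), the map $a_k:\Pi\to\bA_{\lam{k}}$ is a ring homomorphism, and $\bA_{\lam{k}}\subseteq\ATP$ acts centrally on $2$-morphisms in $\defTPstar$. Applying the claimed identity first to $f_2$, commuting the resulting $\ATP$-scalar past the $f_1$-bubble, and then applying the identity to $f_1$ reproduces $\delta(f_1 f_2)=\delta(f_1)\delta(f_2)$ term by term, using that $\delta$ is itself an algebra map.

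Since $\Pi=\bigotimes_{i\in I}\Sym$ is freely generated as an algebra by the power sums $p_{i,r}$, and since these are primitive with $\delta(p_{i,r})=p_{i,r}\otimes 1+1\otimes p_{i,r}$, the proposition reduces to proving

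\begin{equation*}
b_{\mu+\lam{k}}(p_{i,r})\cdot(\mathrm{red}_k) \;=\; a_k(p_{i,r})\cdot(\mathrm{red}_k)+(\mathrm{red}_k)\cdot b_\mu(p_{i,r})
\end{equation*}

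\noindent for each $i\in I$ and $r\geq 1$. The cleanest way to handle all $r$ simultaneously is to assemble the bubbles $b_\mu(p_{i,r})$ into a single generating series in a formal variable $u$ via Newton's identities, converting between power sums and the complete/elementary symmetric functions on which $b_\mu$ was originally defined; the identity then becomes an equality of rational functions in $u$.

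The final step is to verify this generating-series identity directly from the defining relations of $\defTPstar$. By construction of the deformed category, a dot on an $i$-colored strand adjacent to the red string of color $k$ has spectrum contained in the generators $z_{k,i,1},\ldots,z_{k,i,\langle i,\lam{k}\rangle}$ of $\bA_{\lam{k}}$. Evaluating the $i$-colored bubble as it slides past the red string via the standard ``bubble through a line'' relations of the categorified quantum group, specialized to this spectral data, produces a correction term whose $r$-th coefficient is precisely the $r$-th power sum in the $z_{k,i,j}$, i.e.\ $a_k(p_{i,r})$. The hard part is this last calculation: tracking the deformation data through the bubble-slide carefully enough that the correction term is recognizable as $a_k(p_{i,r})$ rather than some opaque element of $\ATP$. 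The preceding reduction steps are essentially formal.
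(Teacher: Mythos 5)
Your reduction to algebra generators is correct: the identity is indeed multiplicative in $f$ (using that $b_\mu$, $b_{\mu+\lam{k}}$, $a_k$, and $\delta$ are all algebra maps and that $\ATP$ acts centrally), and $\Pi$ is freely generated by the $p_{i,r}$ as a commutative algebra. The idea of using generating series is also sound---the paper records the identity for $e_i(x)$, $h_i(x)$, and $p_i(x)$ in \eqref{eq:SlidesTPGenFunc1}--\eqref{eq:SlidesTPGenFunc3}.

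However, the step you flag as ``the hard part'' is exactly where the proposal breaks down, and the difficulty is not addressed. You say the identity for $p_{i,r}$ can be obtained by ``evaluating the $i$-colored bubble as it slides past the red string via the standard `bubble through a line' relations of the categorified quantum group, specialized to this spectral data.'' This conflates two different things. The bubble-slide relations of the categorified quantum group (Lemma~\ref{Lem:SlidesCatQG}) move bubbles past \emph{black} strings; moving a bubble past a \emph{red} string is precisely the identity being proved here, and the only input available is the defining deformed relation \eqref{eq:DeformedRels1}, which moves a single black cup/cap, not a closed loop, through a red string. To slide a bubble one must open it, push the two edges across one at a time, and recombine, and this procedure only applies directly when the bubble is \emph{real} (i.e.\ carries a non-negative number of actual dots). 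Whether the clockwise or the counterclockwise $i$-bubble of a given degree is real depends on the sign of $\langle i,\mu\rangle$ relative to $\lam{k}_i$. Consequently there is no single weight-independent computation that handles $b_{\mu+\lam{k}}(p_{i,r})$ uniformly: a power sum is a fixed combination of bubbles of both orientations and degrees, some of which may be fake for the weight at hand. The paper's proof sidesteps this by choosing a weight-dependent generating set---$e_{i,s}$ when $\langle i,\mu\rangle\leq -\lam{k}_i$, $h_{i,s}$ when $\langle i,\mu\rangle\geq 0$, and a mixed set in the intermediate range, bootstrapped via Lemma~\ref{lem:BubblesSymFun}(iii)---together with the algebraic identities that recover the full algebra $\Sym$ from each such set. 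Without a comparable case analysis your final step is not just unproved but does not have a uniform argument of the kind you indicate. (Minor: the indeterminates $z\in Z_i^{(k)}$ are not generators of $\bA_{\lam{k}}$; $\bA_{\lam{k}}$ is the ring of symmetric polynomials in them.)
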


\noindent The label $(k)$ on the red string indicates that this corresponds to the weight $\lam{k}$.

In the rest of the introduction we outline the proof of Theorem~\ref{thm:introtheorem}. We show that $\TrdefTPstar$ and $\TrTPstar$ are spanned by the classes of diagrams with no crossings between red and black strings, so the maps in \eqref{eq:introtheorem} are surjective if they are well-defined. Combining this with a filtration of $\TrTPstar$ coming from the standardly stratified structure on $\TPstar$, the results of \cite{BHLW17} allow us to derive an upper bound for the dimension of the trace:
\begin{equation}\label{eq:introdims}
    \dim_{\kk} \TrTPstar\leq \dim_{\kk}\Wulam.
\end{equation}

\noindent It remains to show that these dimensions are equal and the maps are well-defined. We do this by showing the maps are well-defined \emph{at the generic point} using Webster's ``unfurling'' (\cite{Web15}), and apply the upper semi-continuity of dimension under deformation.

Let $\KK=\overline{\kk(Z)}$, the algebraic closure of the field of rational polynomials in $Z$. In \cite{Web15} Webster showed that the idempotent completion of the extension of scalars $\defTPstar\otimes_{\ATP}\KK$ carries a 2-representation of the categorified quantum group $\catQGdotstar(\gtild)$ for a larger Lie algebra $\gtild=\bigoplus_{z\in Z}\g$, called an \emph{unfurling} of $\g$, and moreover it is equivalent to a cyclotomic quotient of $\catQGdotstar(\gtild)$. The corresponding statement for Weyl modules was proved in \cite{CFK10}: $\WWulam\otimes_{\ATP}\KK$ is isomorphic to a tensor product of local Weyl modules for $\currdot$ indexed by $Z$; that is, to a single local Weyl module for $\dot{U}(\gtild[t])$ (actually the module structures on these local Weyl modules are ``twisted'' according to the parameter $z$).

Combining these two pictures with \cite{BHLW17} allows us to construct an isomorphism
\begin{equation}\label{eq:introgeneric}
    \WWulam\otimes_{\ATP}\KK\longrightarrow \TrdefTPstar\otimes_{\ATP}\KK.
\end{equation}

\noindent In particular their dimensions are equal, so the upper bound \eqref{eq:introdims} and the upper semi-continuity of dimension under deformation implies that $\TrdefTPstar$ is flat over $\ATP$. Moreover, Proposition~\ref{prop:introprop} implies that the right actions of $\ATP$ on $\WWulam$ and $\TrdefTPstar$ are compatible and so flatness allows us to lift \eqref{eq:introgeneric} to a bimodule isomorphism $\WWulam\to\TrdefTPstar$. Tensoring over $\ATP$ with $\kk$ gives the isomorphism ${\Wulam\to\TrTPstar}$.

The structure of the paper is as follows. In Section 2 we recall preliminaries on quantum groups, current algebras, and Weyl modules. In Section 3 we recall the categorified quantum group $\catQGdot$, 2-representations, and cyclotomic quotients. In Section 4 we recall the undeformed and deformed tensor product algebras and prove Proposition~\ref{prop:introprop}. In Section 5 we adapt Webster's theory of unfurling to our situation to determine the structure of $\defTPstar$ at the generic point. In Section 6 we use the results of the previous section to show that morphism spaces in $\defTPstar$ are free over $\ATP$. In Section 7 we recall the process of trace decategorification and start investigating the structure of $\TrTPstar$. Finally in Section 8 we prove our main result, Theorem~\ref{thm:introtheorem}.

\subsubsection*{Acknowledgements} 

The authors are incredibly grateful to Ben Webster who outlined how the main result could be proved and who was generous with his time and knowledge throughout the project. The authors also thank Weiqiang Wang for initially suggesting the project. 

\subsubsection*{Conventions}

Throughout the article $\kk$ will denote a fixed field of characteristic zero. For an integer $m$, $[1,m]$ denotes the set of integers $k$ such that $1\leq k\leq m$.

Let $\mc{C}$ be a small $\kk$-linear category (we will generally just say ``category'' and ``functor'' and the reader can assume that everything is small and linear). We write $\ob(\mc{C})$ for the set of objects in $\mc{C}$, $\mc{C}(x,y)$ for the $\kk$-vector space of morphisms from $x$ to $y$, and $1_x\in\mc{C}(x,x)$ for the identity morphism on $x$.

If $\mc{C}$ is a graded category with grading shift $\langle 1\rangle$ we let $\mc{C}^*$ denote the category with the same objects as $\mc{C}$ and morphism spaces given by
\begin{equation}
    \mc{C}^*(x,y)=\bigoplus_{t\in \ZZ}\mc{C}(x,y\langle t\rangle),
\end{equation}

\noindent Morphism spaces in $\mc{C}^*$ are $\ZZ$-graded with $\mc{C}(x,y\langle t\rangle)$ in degree $t$.

Let $K_0(\mc{C})$ denote the split Grothendick group of $\mc{C}$ and write $[x]\in K_0(\mc{C})$ for the class of $x\in\ob(\mc{C})$. Write $K_0^{\kk}(\mc{C})=K_0(\mc{C})\otimes_{\ZZ}\kk$. If $\mc{C}$ is graded then grading shift induces a $\ZZ[q^{\pm 1}]$ action on $K_0(\mc{C})$ and $K_0(\mc{C}^*)\cong K_0(\mc{C})\otimes_{\ZZ[q^{\pm 1}]}\ZZ$ where $q$ acts on $\ZZ$ as 1.

\section{Quantum groups and current algebras}\label{Sec:Current}

We fix notation and recall some standard results about current algebras and their modules. The main reference for current algebras is \cite{CFK10}.

\subsubsection{Cartan datum}\label{section:CartanDatum}

Fix a symmetric simply-laced Cartan datum consisting of:
\begin{itemize}
    \item a free $\ZZ$-module $X$, the weight lattice;
    \item a finite indexing set $I$, simple roots $\alpha_i\in X$ and fundamental weights $\Lambda_i\in X$ for $i\in I$;
    \item simple coroots $\alpha_i^{\vee}\in X^{\vee}:=\Hom_{\ZZ}(X,\ZZ)$ for $i\in I$;
    \item a symmetric bilinear form $(-,-)$ on $X$.
\end{itemize}

\noindent Write $\langle -,-\rangle:X^{\vee}\times X\to \ZZ$ for the canonical pairing. Assume that
\begin{itemize}
    \item $(\alpha_i,\alpha_i)=2$ for all $i\in I$;
    \item $\langle i,\lambda \rangle:=\langle \alpha_i^{\vee},\lambda \rangle=(\alpha_i,\lambda)$ for $i\in I$ and $\lambda\in X$;
    \item $(\alpha_i,\alpha_j)\in\{ 0,-1\}$ for $i,j\in I$ with $i\neq j$;
    \item $\langle i,\Lambda_j\rangle=\delta_{ij}$ for $i,j\in I$.
\end{itemize}

\noindent For $i,j\in I$ we will write $\langle j,i\rangle:=\langle \alpha^{\vee}_j,\alpha_i\rangle$. Let $X^+=\bigoplus_{i\in I} \ZZ_{\geq 0}\Lambda_i$ be the set of dominant weights in $X$. We sometimes write $+I$ for $I$ and define a set of formal negatives $-I=\{ -i \mid i\in I\}$. Define the ``absolute value'' $\lvert \pm i\rvert=i$. For $i,j\in I$ let $\alpha_{-i}=-\alpha_i\in X$ and $\langle j,-i\rangle=-\langle j,i\rangle $.

We assume that the Cartan datum is of finite type, so the Cartan matrix $(\langle i,j\rangle)_{i,j\in I}$ is invertible. Let $\Gamma$ denote the corresponding graph without loops or multiple edges. It has vertex set $I$ and an edge between $i$ and $j$ if any only if $\langle i,j\rangle=-1$. For the rest of the article we fix an orientation on $\Gamma$.

\subsubsection{The quantum group}\label{subsec:QG}

Let $q$ be an indeterminate and let $U_q=U_q(\mf{g})$ denote the quantum group associated to the Cartan datum above. This is the $\QQ(q)$-algebra generated by $E_i$, $F_i$, and $K_{\mu}$ for $i\in I$ and $\mu\in X^{\vee}$ and subject to familiar relations (we use the same conventions as in \cite[\S3.1]{BHLW17}.

The quantum group is a Hopf algebra with coproduct $\Delta_q$ given by
\begin{equation*}
    \Delta_q(E_i)=E_i\otimes 1+K_i\otimes E_i,\quad \Delta_q(K_{\mu})=K_{\mu}\otimes K_{\mu},\quad \Delta_q(F_i)=F_i\otimes K_{-i}+1\otimes F_i
\end{equation*}

\noindent for $i\in I$ and $\mu\in X^{\vee}$. Let $U_q^{\ZZ}$ denote the integral form of $U_q$; the $\ZZ[q^{\pm 1}]$-subalgebra of $U_q$ generated by the $K_{\mu}$ for $\mu\in X^{\vee}$ and the divided powers $E_i^{(a)}$ and $F_i^{(a)}$ for $i\in I$, $a\in \NN$.

Let $\dot{U}_q=\dot{U}_q(\mf{g})$ denote the idempotent form of $U_q$; the locally unital $\QQ(q)$-algebra obtained from $U_q$ by adjoining mutually orthogonal idempotents $1_{\lambda}$ for $\lambda\in X$ satisfying
\begin{equation}
    E_i1_{\lambda}=1_{\lambda+\alpha_i}E_i,\quad K_{\mu}1_{\lambda}=1_{\lambda}K_{\mu}=q^{\langle \mu,\lambda\rangle}1_{\lambda},\quad F_i1_{\lambda}=1_{\lambda-\alpha_i}F_i.
\end{equation}

\noindent The algebra $\dot{U}_q$ decomposes as a direct sum
\begin{equation}
    \dot{U}_q=\bigoplus_{\lambda,\mu\in X} 1_{\mu}U_q1_{\lambda}.
\end{equation}

\noindent Let ${\dot{U}_q^{\ZZ}=\bigoplus 1_{\mu}U_q^{\ZZ}1_{\lambda}}$ denote the integral form of $\dot{U}_q$.

For $\lambda\in X^+$, let $V_q(\lambda)$ denote the irreducible $\dot{U}_q$-module generated by a highest weight vector $v_{\lambda}$ of weight $\lambda$ and let $V_q^{\ZZ}(\lambda)=\dot{U}_q^{\ZZ}v_{\lambda}$ be its integral form. For a sequence $\ulambda=(\lam{1},\ldots,\lam{n})$ of dominant weights, let
\begin{equation}
    V_q^{\ZZ}(\ulambda)=V_q^{\ZZ}(\lam{1})\otimes_{\ZZ[q^{\pm 1}]}\cdots\otimes_{\ZZ[q^{\pm 1}]} V_q^{\ZZ}(\lam{n}),
\end{equation}

\noindent regarded as a $\dot{U}_q^{\ZZ}$-module via the coproduct $\Delta_q$.

\subsubsection{The current algebra}\label{Subsec:CurrentDef}

Let $\g$ be the semisimple Lie algebra over $\QQ$ determined by the Cartan matrix. For $i\in I$, we write $e_i$ and $f_i$ for the root vectors of weights $\alpha_i$ and $-\alpha_i$ respectively, and write $\xi_i=[e_i,f_i]$. We will sometimes also write $e_{-i}$ for $f_i$. Let $\mf{h}=\text{span}_{\QQ} \{\xi_i\mid i\in I\}$ be the canonical Cartan subalgebra of $\g$ and $\mf{n}=\langle e_i\mid i\in I\rangle$ the sum of the positive root spaces.

Let $t$ be an indeterminate and set $\mf{g}[t]:=\mf{g}\otimes_{\QQ}\QQ[t]$ with Lie bracket given by
\begin{equation}
    [x\otimes t^r,y\otimes t^s]=[x,y]\otimes t^{r+s}
\end{equation}

\noindent for $x,y\in\mf{g}$ and $r,s\in\NN$.

\begin{definition}
    The \emph{current algebra} $U(\mf{g}[t])=U_{\kk}(\mf{g}[t])$ of $\mf{g}$ over $\kk$ is the universal enveloping algebra of $\mf{g}[t]$ over the field $\kk$. It is $\ZZ$-graded with $x\otimes t^r$ in degree $2r$ for $x\in \mf{g}$ and $r\in \NN$.
\end{definition}

\noindent This is a Hopf algebra with coproduct sending $x$ to $x\otimes 1+1\otimes x$ for all $x\in \mf{g}[t]$.

The current algebra has an idempotent form $\dot{U}(\mf{g}[t])$; the locally unital $\kk$-algebra obtained from $U(\mf{g}[t])$ by adjoining mutually orthogonal idempotents $1_{\lambda}$ for $\lambda\in X$ satisfying
\begin{equation*}
    (e_i\otimes t^r)1_{\lambda}=1_{\lambda+\alpha_i}(e_i\otimes t^r),\quad(\xi_i\otimes t^r)1_{\lambda}=1_{\lambda}(\xi_i\otimes t^r),\quad
    (f_i\otimes t^r)1_{\lambda}=1_{\lambda-\alpha_i}(f_i\otimes t^r)
\end{equation*}

\noindent and
\begin{equation}
    (\xi_i\otimes 1)1_{\lambda}=1_{\lambda}(\xi_i\otimes 1)=\langle i,\lambda\rangle1_{\lambda}
\end{equation}

\noindent for $i\in I$ and $r\in \NN$. The idempotent form is also $\ZZ$-graded. We will pass freely between weight modules for $U(\mf{g}[t])$ with integral weights and $\dot{U}(\mf{g}[t])$-modules.

\subsubsection{Symmetric functions}\label{subsec:SymmetricFunc}

Let $\Sym$ denote the ring of symmetric functions over $\kk$ and define $\Pi:=\bigotimes_{i\in I}\Sym$. We denote power sum, elementary, and complete homogeneous symmetric functions in the $i^{\text{th}}$ component of $\Pi$ by $\{p_{i,r}\}_{r\geq 1}$, $\{e_{i,r}\}_{r\geq 1}$, and $\{h_{i,r}\}_{r\geq 1}$ respectively. We consider $\Pi$ as a $\ZZ$-graded algebra with $p_{i,r}$, $e_{i,r}$, and $h_{i,r}$ in degree $2r$.

It will be useful for us to consider the generating functions
\begin{equation}
    p_i(x)=\sum_{r=1}^{\infty}p_{i,r}x^r,\quad e_i(x)=\sum_{r=0}^{\infty} e_{i,r}x^r,\quad h_i(x)=\sum_{r=0}^{\infty} h_{i,r}x^r,
\end{equation}

\noindent where $x$ is a formal indeterminate (by convention $h_{i,0}=e_{i,0}=1$). If we regard the $i$th copy of $\Sym$ in $\Pi$ as consisting of symmetric functions in countably many variables $y_1,y_2,\ldots$ then
\begin{equation}\label{eq:GenFuncVariables}
    p_i(x)=\sum_{j=1}^{\infty}\frac{y_jx}{1-y_jx},\quad e_i(x)=\prod_{j=1}^{\infty} (1+y_jx),\quad h_i(x)=\prod_{j=1}^{\infty}(1-y_jx)^{-1}.
\end{equation}

For $n\in \NN$, let $\Sym_n$ denote the polynomial algebra in $n$ variables over $\kk$, also $\ZZ$-graded with each variable having degree 2. For $m,n\in\NN$, there is an inclusion
\begin{equation}
    \Sym_{m+n}\longrightarrow \Sym_m\otimes \Sym_n.
\end{equation}

\noindent Taking the direct limit over $m$ and $n$ this gives a map from $\Sym$ to $\Sym\otimes\Sym$, and tensoring over $I$ copies yields a coproduct $\delta:\Pi\to \Pi\otimes \Pi$. In terms of the generating functions:
\begin{equation*}
    \delta(p_i(x))=p_i(x)\otimes 1+1\otimes p_i(x),\quad \delta(e_i(x))=e_i(x)\otimes e_i(x),\quad \delta(h_i(x))=h_i(x)\otimes h_i(x).
\end{equation*}

For any $\lambda\in X$, there is an isomorphism of graded Hopf algebras
\begin{align}\label{eq:CartanAndSym}
    \begin{split}
        \Pi  & \longrightarrow 1_{\lambda}U(\mf{h}[t])1_{\lambda} \\
        p_{i,r} & \longmapsto 1_{\lambda}(\xi_i\otimes t^r)1_{\lambda}
    \end{split}
\end{align}

\noindent for $i\in I$ and $r\geq 1$. Note in particular that it intertwines $\delta$ with the coproduct on $U(\mf{h}[t])$.

%\pageline

%There is an injective homomorphism of graded Hopf algebras
%
%\begin{equation}\label{eq:CartanAndSym}
%    \Pi  \longrightarrow U(\mf{h}[t])
%\end{equation}

%\noindent sending $p_{i,r}$ to $\xi_i\otimes t^r$ for $i\in I$ and $r\geq 1$. Note in particular that it intertwines $\delta$ with the coproduct on $U(\mf{h}[t])$. The map isn't an isomorphism since $\xi_i\otimes 1$ doesn't lie in the image, but these elements act as scalars on weight modules so for our purposes the algebras are essentially interchangeable.
%
%Observe that this isn't quite an isomorphism because $\xi_i\otimes 1$ isn't in the image. However, since all the modules we consider are weight modules, $\xi_i\otimes 1$ will always act as a scalar and so this difference is irrelevant in practice.

For $\lambda\in X^+$ define a $\ZZ$-graded algebra
\begin{equation}
    \bA_{\lambda}:=\bigotimes_{i\in I}\Sym_{\langle i,\lambda\rangle}.
\end{equation}

%\noindent To differentiate from the analogous symmetric functions in $\Pi$, we denote power sum, elementary, and complete homogeneous symmetric polynomials in the $i^{\text{th}}$ component of $\bA_{\lambda}$ by $\{p_{i,r}(z)\}_{r\geq 0}$, $\{e_{i,r}(z)\}_{r\geq 0}$, and $\{h_{i,r}(z)\}_{r\geq 0}$ respectively. There is a surjective homomorphism of graded algebras $a_{\lambda}:\Pi\to \bA_{\lambda}$ sending $p_{i,r}$ to $p_{i,r}(z)$ for $i\in I$ and $r\in \NN$. Moreover, this factors through $U(\mf{h}[t])$ by sending $\xi_i\otimes 1$ to $p_{i,0}(z)=\langle i,\lambda\rangle$.

\noindent There is a surjective homomorphism of graded algebras $\Pi\to \bA_{\lambda}$ sending $p_{i,r}$, $e_{i,r}$, and $h_{i,r}$ to the corresponding symmetric polynomials. %Moreover, this factors through $U(\mf{h}[t])$ by sending $\xi_i\otimes 1$ to $\langle i,\lambda\rangle$ (the degree zero power-sum symmetric polynomial in $\Sym_{\langle i,\lambda\rangle}$).

%We will frequently use the fact that $\bA_{\lambda}$ is a graded local ring; that is, it has a unique graded simple module which we denote by $\kk$.

\subsubsection{Weyl modules}\label{subsec:Verma}

Let $\mf{p}=\mf{h}\oplus \mf{n}[t]\subseteq \mf{g}[t]$. For $\lambda\in X^+$, let $M(\lambda)$ denote the Verma-like module
\begin{equation}
    M(\lambda)=U(\mf{g}[t])\otimes_{U(\mf{p})}\kk_{\lambda},
\end{equation}

\noindent where $\kk_{\lambda}$ is the one-dimensional $\mf{h}$-module on which each $\xi_i$ acts by $\langle i,\lambda\rangle$, induced up to $\mf{p}$. Let $m_{\lambda}:=1\otimes 1\in M(\lambda)$. Then $M(\lambda)$ is generated by $m_{\lambda}$ subject to the following relations:
\begin{equation}\label{eq:VermaPresentation}
    \mf{n}[t]\cdot m_{\lambda}=0, \quad (\xi_i\otimes 1)\cdot m_{\lambda}=\langle i,\lambda\rangle m_{\lambda}
\end{equation}

\noindent for $i\in I$. It is a graded $U(\mf{g}[t])$-module with $m_{\lambda}$ in degree 0. There is a right action of $\Pi$ on $M(\lambda)$ given by
\begin{equation}
    (um_{\lambda})\cdot p_{i,r}=u(\xi_i\otimes t^r)m_{\lambda}
\end{equation}

\noindent for $i\in I$, $r\geq 1$, and $u\in U(\mf{g}[t])$. This implicitly uses the identification in \eqref{eq:CartanAndSym} of $\Pi$ with $1_{\lambda}U(\mf{h}[t])1_{\lambda}$, and makes $M(\lambda)$ a graded $(U(\mf{g}[t]),\Pi)$-bimodule.

The \emph{global Weyl module} $\WW(\lambda)$ is the $U(\mf{g}[t])$-module quotient of $M(\lambda)$ by the relation
\begin{equation}
    (f_i\otimes 1)^{\langle i,\lambda\rangle+1}\cdot m_{\lambda}=0
\end{equation}

\noindent for $i\in I$. It is also $\ZZ$-graded. We write $w_{\lambda}$ for the image of $m_{\lambda}$ in $\WW(\lambda)$. The right action of $\Pi$ on $M(\lambda)$ descends to an action on $\WW(\lambda)$. In fact we have the following:

\begin{theorem}\cite{CFK10}\label{thm:WeylFree}
    The action of $\Pi$ on $\WW(\lambda)$ factors through a faithful action of $\bA_{\lambda}$. Moreover, $M(\lambda)$ and $\WW(\lambda)$ are free 
    right $\Pi$- and $\bA_{\lambda}$-modules respectively.
\end{theorem}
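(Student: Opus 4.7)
The plan splits naturally in two: $M(\lambda)$ is handled directly by PBW, while $\WW(\lambda)$ requires Garland's identity together with a semicontinuity argument. For $M(\lambda)$, decompose $\g[t] = \mf{p} \oplus t\mf{h}[t] \oplus \mf{n}^-[t]$; PBW then gives a vector-space isomorphism $M(\lambda) \cong U(\mf{n}^-[t]) \otimes_{\kk} U(t\mf{h}[t])$, where the second factor is identified with $\Pi$ via \eqref{eq:CartanAndSym}. Since $U(\mf{h}[t])$ is commutative, for $n \in U(\mf{n}^-[t])$ and $h \in U(t\mf{h}[t])$ we have $(nh\, m_\lambda) \cdot p_{i,r} = nh(\xi_i \otimes t^r) m_\lambda = n\bigl(h \cdot (\xi_i \otimes t^r)\bigr) m_\lambda$, i.e., the right action is just multiplication on the second tensor factor. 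Hence $M(\lambda)$ is free over $\Pi$ on any basis of $U(\mf{n}^-[t])$.

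For $\WW(\lambda)$, first show that the $\Pi$-action factors through the surjection $\Pi \twoheadrightarrow \bA_\lambda$. The defining relation $(f_i \otimes 1)^{\langle i, \lambda \rangle+1} w_\lambda = 0$, processed via Garland's identity in $U(\g[t])$, yields a family of polynomial identities among the elements $\xi_i \otimes t^r$ acting on $w_\lambda$. Under $p_{i,r} \leftrightarrow \xi_i \otimes t^r$ these identities are precisely the universal ones expressing power sums in more than $\langle i, \lambda\rangle$ variables in terms of $p_{i,1}, \ldots, p_{i,\langle i,\lambda\rangle}$, so they cut out the kernel of $\Pi \twoheadrightarrow \bA_\lambda$. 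Because the right $\Pi$-action commutes with the left $\g[t]$-action and $w_\lambda$ is cyclic, the factorization extends from $w_\lambda$ to all of $\WW(\lambda)$.

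To establish freeness of $\WW(\lambda)$ over $\bA_\lambda$, study the family of local Weyl modules $W_\mathbf{a} := \WW(\lambda) \otimes_{\bA_\lambda} \kk_\mathbf{a}$ indexed by $\mathbf{a} \in \operatorname{MaxSpec}(\bA_\lambda)$. The Garland-type relations together with PBW estimates show that $\WW(\lambda)$ is finitely generated over $\bA_\lambda$ and give a uniform upper bound $\dim W_\mathbf{a} \leq \prod_i (\dim \WW(\Lambda_i))^{\langle i, \lambda \rangle}$. For a generic $\mathbf{a}$ with pairwise distinct coordinates, identify $W_\mathbf{a}$ with a tensor product of evaluation modules of fundamental Weyl modules, attaining this upper bound. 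Upper semicontinuity of fiber dimension then forces equality on every fiber, and together with the polynomial-ring structure of $\bA_\lambda$ this yields freeness. Faithfulness is then immediate: a nonzero free module over the integral domain $\bA_\lambda$ has zero annihilator. The main obstacle will be the generic dimension calculation — constructing compatible cyclic vectors realizing the surjection $W_\mathbf{a} \twoheadrightarrow \bigotimes_i W(\Lambda_i)^{\otimes \langle i,\lambda\rangle}$ at pairwise distinct evaluation parameters and then matching dimensions to promote the surjection to an isomorphism; Garland's upper bound and semicontinuity handle the rest.
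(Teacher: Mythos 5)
The paper's ``proof'' of this theorem is really just a pointer to the literature (CFK10 Theorem~4 for the first claim, CP01/CL06/FL07/BN04 for freeness of $\WW(\lambda)$, PBW for $M(\lambda)$), so what you have written is a reconstruction of the cited arguments rather than something to compare against a proof in the text. Your decomposition $\g[t] = \mf{p} \oplus t\h[t] \oplus \mf{n}^-[t]$ and the resulting identification of the right $\Pi$-action with multiplication on the $U(t\h[t])$ factor is exactly the PBW argument the paper alludes to for $M(\lambda)$, and invoking Garland's identity to show the $\Pi$-action on $\WW(\lambda)$ kills the kernel of $\Pi \to \bA_\lambda$ is indeed how CFK10 proves the first claim. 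Deducing faithfulness from freeness is logically fine (though CFK10 get it more cheaply: the $\lambda$-weight space of $\WW(\lambda)$ is already a free rank-one $\bA_\lambda$-module).

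The gap is in the freeness argument for $\WW(\lambda)$, and specifically in the sentence claiming that ``Garland-type relations together with PBW estimates\dots give a uniform upper bound $\dim W_{\mathbf a} \le \prod_i (\dim W(\Lambda_i))^{\langle i,\lambda\rangle}$.'' You have the semicontinuity skeleton oriented correctly --- generic fibers are minimal, the generic fiber is a tensor product of evaluation modules by a Chinese-remainder argument, and matching a uniform upper bound to the generic value would give constant fiber dimension and hence freeness over the polynomial ring $\bA_\lambda$. But that sharp upper bound is precisely the hard direction of the Chari--Pressley conjecture, and Garland plus PBW estimates do \emph{not} deliver it outside $\mathfrak{sl}_2$: Chari--Pressley themselves could only get a crude bound in higher rank, which is why it remained a conjecture. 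The actual proofs (CL06 for $\mathfrak{sl}_{r+1}$, FL07 in the simply-laced case, with BN04 supplying the canonical-basis input) proceed by identifying the local Weyl module at the origin with a Demazure module for the affine algebra and invoking the Demazure character formula for the product dimension. So the ``main obstacle'' you flag at the end is actually the easy half (CRT at distinct parameters gives the generic isomorphism almost for free); the hard half is the upper bound on $\dim W_0$, and it needs Demazure-module theory, not elementary PBW counting. Your outline is the right one, but as written it elides exactly the step that the paper's citations are carrying.
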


\begin{proof}
    The first claim is \cite[Theorem~4]{CFK10}. The fact that $\WW(\lambda)$ is a free $\bA_{\lambda}$-module follows from the work of \cite{CP01}, \cite{CL06}, \cite{FL07}, and \cite{BN04}. See \cite[\S7.2]{CFK10} for a more detailed discussion. Finally $M(\lambda)$ is free by the PBW theorem.
\end{proof}

\noindent In particular, $\WW(\lambda)$ is a graded $(U(\mf{g}[t]),\bA_{\lambda})$-bimodule.

The \emph{local Weyl module} $W(\lambda)$ associated to $\lambda\in X^+$ is 
\begin{equation}
    W(\lambda):=\WW(\lambda)\otimes_{\bA_{\lambda}}\kk,
\end{equation}

\noindent where $\kk$ is the unique simple graded $\bA_{\lambda}$-module. Equivalently, $W(\lambda)$ is the $U(\mf{g}[t])$-module quotient of $\WW(\lambda)$ by the relation
\begin{equation}
    \mf{h}t[t]\cdot w_{\lambda}=0.
\end{equation}

\noindent It is a graded $U(\mf{g}[t])$-module.

\subsubsection{Notation for tensor products}\label{subsec:TPWeylMods}

For the rest of the paper we fix $n\in \NN$ and a sequence $\ulambda=(\lam{1},\ldots,\lam{n})$ of dominant weights. For $k\in [1,n]$ and $i\in I$ let $\lam{k}_i:=\langle i,\lam{k}\rangle$. Define
\begin{equation}
    M(\ulambda)=\bigotimes_{k=1}^n M(\lam{k}),\quad \WWulam=\bigotimes_{k=1}^n \WW(\lam{k}), \quad \Wulam=\bigotimes_{k=1}^n W(\lam{k}).
\end{equation}

\noindent Each of these is a graded $\curr$-module via the coproduct.

Define also
\begin{equation}
    \ATP=\bigotimes_{k=1}^n\bA_{\lam{k}}=\bigotimes_{i\in I}\bigotimes_{k=1}^n\Sym_{\lam{k}_i}
\end{equation}

\noindent and write $a_k$ for the projection $\Pi\to\bA_{\lam{k}}\subseteq\ATP$. We regard elements of $\Sym_{\lam{k}_i}$ as symmetric polynomials in a set $Z_i^{(k)}$ of $\lam{k}_i$ indeterminates. Define disjoint unions
\begin{equation}
    Z^{(k)}:=\coprod_{i\in I} Z^{(k)}_i,\qquad Z_i:=\coprod_{k\in [1,n]} Z^{(k)}_i\qquad Z:=\coprod_{k\in[1,n]}Z^{(k)}.
\end{equation}

The algebra $\ATP$ is graded local ring. Let $\kk$ denote the unique simple graded $\ATP$-module (on which all $z\in Z$ act as zero) and let $\KK:=\overline{\kk(Z)}$ (the algebraic closure of the field of rational functions in the indeterminates $Z$). We consider $\ATP$ as embedded in $\KK$.

There is a graded $(\curr,\Pi^{\otimes n})$-bimodule structure on $M(\ulambda)$ where $\Pi^{\otimes n}$ acts component-wise (note that the right actions are defined component-wise, not using the coproduct $\delta$). This induces a graded $(\curr,\ATP)$-bimodule structure on $\WWulam$ from which we obtain $\Wulam=\WWulam\otimes_{\ATP}\kk$.

\section{Categorified quantum groups}

In this section we recall the definition of the categorified quantum group ${\catQGdot=\catQGk}$. We state an isomorphism between the tensor product $\Pi=\bigotimes_{i\in I}\Sym$ of symmetric functions and bubbles in $\catQGdot$ and formulate bubble slides in $\catQGdot$. Finally we recall the notion of a (graded) 2-representation of $\catQGdot$ and the definition of the deformed $\defcyclolam=\defcyclolamk$ and undeformed $\cyclolam=\cyclolamk$ cyclotomic quotients of $\catQGdot$.

\subsection{Definition}\label{subsec:CatQG}

We use the ``cyclic'' formulation of the categorified quantum group defined in \cite{BHLW16}, where it is denoted $\mc{U}^{cyc}_Q(\mf{g})$. It is shown in Theorem~2.1 in loc.\ cit.\ that $\mc{U}^{cyc}_Q(\mf{g})$ is equivalent to the 2-category defined in \cite{CL15}.

Note that we read our diagrams from \emph{right to left} and bottom to top following \cite{BHLW16}, \cite{CL15}, \cite{KL10}, and \cite{Lau10}. In contrast, Webster (\cite{Web15}, \cite{Web16}, \cite{Web17}) reads his diagrams from left to right.

\subsubsection{Choice of parameters}\label{subsec:parameters}

The definition of $\catQGdot$ depends on two additional pieces of information: a \emph{choice of scalars} $t_{ij}\in \kk$ for $i,j\in I$ and a choice of \emph{bubble parameters} $c_{i,\lambda}\in \kk$ for $i\in I$ and $\lambda\in X$.

Recall from \S\ref{section:CartanDatum} that we have fixed an orientation on the graph $\Gamma$ associated the Cartan datum. For the choice of scalars we set
\begin{equation}\label{eq:ChoiceOfScalars}
    t_{ij}=\begin{cases}
        1   & \text{if } i=j \text{ or } \langle i,j\rangle =0; \\
        1   & \text{if there is an arrow }j\to i; \\
        -1  & \text{if there is an arrow }i\to j.
    \end{cases}
\end{equation}

\noindent In \cite{Web16} this is called the \emph{geometric} choice of scalars.

For the bubble parameters we allow any choice of $c_{i,\lambda}\in \kk\setminus \{0\}$ for $i\in I$ and $\lambda\in X$ consistent with the conditions
\begin{equation}
    c_{i,\lambda+\alpha_j}=t_{ij}c_{i,\lambda}
\end{equation}

\noindent for $i,j\in I$ and $\lambda\in X$.

\subsubsection{KLR algebras}\label{subsec:KLR}

For $m\in \NN$, the Khovanov-Lauda-Rouquier algebra, or \emph{KLR algebra}, $R_m$ is a $\kk$-algebra with generators $\{ \varep_{\ui} \mid \ui\in I^m\}$, $\{y_k\mid k\in[1,m]\}$, and $\{\psi_l \mid l\in[1,m-1]\}$.

We will often represent elements of $R_m$ diagrammatically: write
\begin{equation*}\label{KLRRels1}
    \varep_{\ui} = 
    \begin{tikzpicture}[very thick,baseline=(current bounding box.center)] 
        \draw (-1,0)--(-1,1); 
        \node at (-1,-.3){$i_1$};
        \draw (-.5,0)--(-.5,1); 
        \node at (-.5,-.3){$i_2$};
        \node at (0,.5){$\cdots$};
        \draw (.5,0)--(.5,1); 
        \node at (.5,-.3){$i_m$};
    \end{tikzpicture}\qquad
    y_k \varep_{\ui} = 
    \begin{tikzpicture}[very thick,baseline=(current bounding box.center)] 
        \draw (-1,0)--(-1,1); 
        \node at (-1,-.3){$i_1$};
        \node at (-.5,.5){$\cdots$};
        \draw (0,0)--(-0,1); 
        \node at (0,-.3){$i_k$};
        \draw[fill] (0,.5) circle(2pt);
        \draw (1,0)--(1,1); 
        \node at (.5,.5){$\cdots$};
        \node at (1,-.3){$i_m$};
    \end{tikzpicture}\qquad
    \psi_l \varepsilon_{\ui} = 
    \begin{tikzpicture}[very thick,baseline=(current bounding box.center)] 
        \draw (-1,0)--(-1,1); 
        \node at (-1,-.3){$i_1$};
        \node at (-.5,.5){$\cdots$};
        \draw (0,0)--(1,1); 
        \node at (0,-.3){$i_l$};
        \draw (1,0)--(0,1);
        \node at (1,-.3){$i_{l+1}$};
        \node at (1.5,.5){$\cdots$};
        \draw (2,0)--(2,1); 
        \node at (2,-.3){$i_m$};
    \end{tikzpicture}
\end{equation*}
    
\noindent The product $a\cdot b$ is represented by placing the diagram for $a$ above that for $b$ and attempting to connect the strings. If the labels do not match then we get zero (i.e. the $\varepsilon_{\ui}$ are mutually orthogonal).

If $q(y)=\sum_{r=0}^m a_ry^r\in\kk[y]$ is a polynomial, we will denote the element $q(y_k)\varep_{\ui}$ diagrammatically by
\begin{equation}\label{eq:PolysInDots}
    \begin{tikzpicture}[very thick,baseline=(current bounding box.center),rounded corners]
        \draw (-3.25,.75)--(-3.25,-.5) node[at end, below]{$i_1$};
        \node at (-2.85,.125){$\cdots$};
        \draw (-2,.75)--(-2,.375);
        \draw (-2.5,-0.125) rectangle (-1.5,.375) node[midway]{$q(y)$};
        \draw (-2,-0.125)--(-2,-.5) node[at end, below]{$i_k$};
        \node at (-1.15,.125){$\cdots$};
        \draw (-0.75,.75)--(-0.75,-.5) node[at end, below]{$i_n$};
    \end{tikzpicture} \ = \ \scalebox{1.2}{${\displaystyle \sum_{r=0}^m a_r}$}
    \ \begin{tikzpicture}[very thick,baseline=(current bounding box.center),rounded corners]
        \draw (-3.0,.75)--(-3.0,-.5) node[at end, below]{$i_1$};
        \node at (-2.5,.125){$\cdots$};
        \draw[fill] (-2,.125) circle (2pt) node[right]{$r$};
        \draw (-2,.75)--(-2,-.5) node[at end, below]{$i_k$};
        \node at (-1.15,.125){$\cdots$};
        \draw (-0.75,.75)--(-0.75,-.5) node[at end, below]{$i_n$};
    \end{tikzpicture}
\end{equation}

\noindent where $r$ next to a dot indicates $r$ dots.

Isotopic diagrams are equal and subject to well-known local relations. In particular:
\begin{equation} \label{eq:nilHecke-1}
    \begin{tikzpicture}[scale=.55,baseline]
        \draw[very thick,postaction={decorate,decoration={markings,
        mark=at position .2 with {\arrow[scale=1.3]{}}}}](-4,0)
        +(-1,-1) -- +(1,1) node[below,at start]
        {$i$}; 
        \draw[very
        thick,postaction={decorate,decoration={markings, mark=at
        position .2 with {\arrow[scale=1.3]{}}}}](-4,0) +(1,-1) --
        +(-1,1) node[below,at start]
        {$j$}; \fill (-4.5,.5) circle (4pt);
        % \draw[very thick] (0,0) +(0,-1) -- +(0,1) node[below, at
        % start]{$i$}; \fill (0,0) circle (5pt);
        \node at (-2,0){=}; \draw[very
        thick,postaction={decorate,decoration={markings, mark=at
         position .8 with {\arrow[scale=1.3]{}}}}](0,0) +(-1,-1) --
        +(1,1) node[below,at start]
        {$i$}; \draw[very
        thick,postaction={decorate,decoration={markings, mark=at
        position .8 with {\arrow[scale=1.3]{}}}}](0,0) +(1,-1) --
        +(-1,1) node[below,at start]
        {$j$}; \fill (.5,-.5) circle (4pt); \node at
        (2,0){$+$};
        \node at (2.75,0){$\delta_{ij}$};
        \draw[very
        thick,postaction={decorate,decoration={markings, mark=at
        position .5 with {\arrow[scale=1.3]{}}}}](4,0) +(-.5,-1) --
        +(-.5,1) node[below,at start]
        {$i$}; \draw[very
        thick,postaction={decorate,decoration={markings, mark=at
         position .5 with {\arrow[scale=1.3]{}}}}](4,0) +(.5,-1) --
        +(.5,1) node[below,at start] {$j$};
    \end{tikzpicture}
\end{equation}
\begin{equation} \label{eq:black-bigon}
  \begin{tikzpicture}[very thick,scale=.65,baseline=(current bounding box.center)]
      \draw[postaction={decorate,decoration={markings, mark=at
          position .5 with {\arrow[scale=1.3] {}}}}] (-2.8,0) +(0,-1)
      .. controls (-1.2,0) ..  +(0,1) node[below,at
      start]{$i$}; \draw[postaction={decorate,decoration={markings,
          mark=at position .5 with {\arrow[scale=1.3] {}}}}] (-1.2,0)
      +(0,-1) .. controls (-2.8,0) ..  +(0,1) node[below,at
      start]{$j$}; \node at (-.5,0) {=};
    \end{tikzpicture}
    \left\{\begin{array}{cl}0\begin{tikzpicture}[very thick, baseline=(current bounding box.center)]
    \node at (0,0){};
    \node at (0,1){};
    \end{tikzpicture}
    & \text{if }\langle i,j\rangle=2, \\
    \begin{tikzpicture}[very thick, baseline=(current bounding box.center)]
    \draw (0,0)--(0,1);
    \draw[] (.5,0)--(.5,1);
    \node at (0,-.2){$i$};
    \node at (.5,-.2){$j$};
    %\draw[fill] (0,.6) circle(2pt);
    \end{tikzpicture}& \text{if }\langle i,j\rangle=0, \\
     t_{ij}\begin{tikzpicture}[very thick, baseline=(current bounding box.center)]
    \draw (0,0)--(0,1);
    \draw[] (.5,0)--(.5,1);
    \node at (0,-.2){$i$};
    \node at (.5,-.2){$j$};
    \draw[fill] (0,.5) circle(2pt);
    \end{tikzpicture} + 
     t_{ji}\begin{tikzpicture}[very thick, baseline=(current bounding box.center)]
    \draw (0,0)--(0,1);
    \draw[] (.5,0)--(.5,1);
    \node at (0,-.2){$i$};
    \draw[fill] (.5,.5) circle(2pt);
    \node at (.5,-.2){$j$};
    \end{tikzpicture}&\text{if }\langle i,j\rangle=-1
    \end{array}
    \right.
\end{equation}

\noindent For the remaining relations the reader is referred to \cite[(2.8)-(2.14)]{CL15} (we have $r_i=1$, $d_{i,j}=1$, and $s_{ij}^{pq}=0$ for all $i,j,p,q$).

\subsubsection{Definition of $\catQG$}\label{subsec:DefCatQG}

We define a graded $\kk$-linear 2-category $\catQG=\mc{U}_{\kk}(\g)$ (so morphism spaces $\catQG(\lambda,\mu)$ in $\catQG$ are graded $\kk$-linear categories). Objects in $\catQG$ are weights $\lambda\in X$. The 1-morphisms $\lambda\to\mu$ are formal direct sums of grading shifts of symbols $1_{\mu}\mc{E}_{\ui}1_{\lambda}$, where $\ui=(i_1,\ldots, i_m)\in (\pm I)^m$ for some $m\in \NN$ such that
\begin{equation}
    \lambda+\sum_{j=1}^m \alpha_{i_j}=\mu,
\end{equation}
        
\noindent (recall that we write $\alpha_{-i}=-\alpha_i$). Since $\mu$ is uniquely determined we often drop it from our notation and write $\mc{E}_{\ui}1_{\lambda}$ for $1_{\mu}\mc{E}_{\ui}1_{\lambda}$.

The 2-morphisms in $\catQG$ are $\kk$-linear combinations of (grading shifts of) \emph{Khovanov-Lauda (KL) diagrams}. A KL diagram consists of finitely many oriented black strings in $\RR\times [0,1]$, labelled by elements of $I$ and decorated with finitely many dots, with the regions between strings labelled by weights. Diagrams have no triple points or tangencies and any open end of a string must meet one of the lines $y=0$ or $y=1$ at a distinct point from all other strings. The labelling of regions must be consistent with the local rules below:

%\red{labelling across strings}
\begin{equation}\label{eq:WeightsAcrossStrands}
    \begin{tikzpicture}[baseline=(current bounding box.center)]
    \draw[very thick,->] (-3,0)--(-3,1);
    \node at (-3,-.3){$i$};
    \node at (-3.7,.5){$\mu + \alpha_i$};
    \node at (-2.25,.5){$\mu$};
    \draw[very thick,<-] (1,0)--(1,1);
    \node at (1,-.3){$i$};
    \node at (.3,.5){$\mu-\alpha_i$};
    \node at (1.75,.5){$\mu $};
    \end{tikzpicture}
\end{equation}

\noindent Since the labelling of all regions is uniquely determined by that of a single one, we will often only label one region.

Take a KL diagram. Let $\lambda$ and $\mu$ be the weights of the right- and left-most regions respectively. Reading along the bottom ($y=0$) of the diagram yields a signed sequence $\ui=(i_1,\ldots ,i_m)\in (\pm I)^m$ where $|i_k|$ is the label of the $k$th string from the left, and $i_k\in +I$ (resp.\ $-I$) if the string is oriented upward (resp.\ downward). Similarly, reading along the top ($y=1$) of the diagram yields a signed sequence $\uj$. We assign the diagram a degree $d$ by taking the sum of the degrees of the elementary diagrams of which it is composed: a dot has degree 2, a crossing between and $i$-string and a $j$-string has degree $-\langle i,j\rangle$, and cups and caps have the following degrees:
\begin{equation}\label{eq:DegDiagrams2} 
    \deg \begin{tikzpicture}[baseline,very thick, scale=1.5] 
    \draw[->] (.2,.1) to[out=-120,in=-20] node[at end, above left, scale=.8]{$i$} (-.2,.1);
    \node[scale=.8] at (0,.3){$\lambda$}; \end{tikzpicture} = \langle i,\lambda\rangle - 1
\qquad \deg\begin{tikzpicture}[baseline,very thick,scale=1.5]
\draw[<-] (.2,.1)to[out=-120,in=-60] node[at end,above left,scale=.8]{$i$} (-.2,.1);
\node[scale=.8] at (0,.3){$\lambda$};\end{tikzpicture} =-\langle i,\lambda\rangle-1
\end{equation}
\begin{equation}\label{eq:DegDiagrams3}
  \deg\begin{tikzpicture}[baseline,very thick,scale=1.5]\draw[<-] (.2,.1)
    to[out=120,in=60] node[at end,below left,scale=.8]{$i$} (-.2,.1)
    ;\node[scale=.8] at (0,-.1){$\lambda$};\end{tikzpicture} =\langle i,\lambda\rangle-1 \qquad \deg\begin{tikzpicture}[baseline,very
  thick,scale=1.5]\draw[->] (.2,.1) to[out=120,in=60] node[at
    end,below left,scale=.8]{$i$} (-.2,.1);\node[scale=.8] at
    (0,-.1){$\lambda$};\end{tikzpicture} =-\langle i,\lambda\rangle-1.
\end{equation}

\noindent Then this diagram is a 2-morphism $1_{\mu}\mc{E}_{\ui}1_{\lambda}\Rightarrow 1_{\mu}\mc{E}_{\uj}1_{\lambda}\langle d\rangle$.

The \emph{horizontal composition} $D\circ D'$ of KL diagrams $D$ and $D'$ is given by placing $D$ to the left of $D'$ if the weights of the corresponding regions match. Their \emph{vertical composition} $D\cdot D'$ is given by placing $D$ on top of $D'$ and connecting strings if the corresponding 1-morphisms match. These definitions extend to all 2-morphisms. So ``$\cdot$'' denotes composition in the category $\catQG(\lambda,\mu)$ and `$\circ$' denotes composition
\begin{equation}
    \catQG(\nu,\mu)\times \catQG(\lambda,\mu)\longrightarrow \catQG(\lambda,\nu).
\end{equation}

The 2-morphisms in $\catQG$ are subject to the additional local relations listed in \cite[Definition~1.3]{BHLW16}. In particular, if we interpret diagrams in the KLR algebra as having all strings oriented \emph{upward} then 2-morphisms locally satisfy the KLR relations, bubbles are subject to the relations described in \S\ref{subsec:BubblesInU}, and we have the following for any $i\in I$ and $\lambda\in X$:
    \begin{equation}\label{eq:Extendedsl2}
		\begin{tikzpicture}[very thick, baseline=(current bounding box.center)]
			\draw[->] (-.5,-.5)--(-.5,.75) node[at start,below]{$i$};
%			\draw[fill] (-.5,.15) circle (1.5pt) node[left]{$r$};
			\draw[<-] (.25,-.5)--(.25,.75) node[at start,below]{$i$};
			\node[scale=1.3] at (.6,.55){$\lambda$};
		\end{tikzpicture}\ = \ - 
		\begin{tikzpicture}[very thick,scale=.65,baseline=(current bounding box.center)]
		      \draw[->] (-2.8,0) +(0,-1)
		      .. controls (-1.2,0) ..  +(0,1) node[below,at
		      start]{$i$}; 
			\node[scale=1.3] at (-.8,.9){$\lambda$};
			\draw[<-] (-1.2,0)
		      +(0,-1) .. controls (-2.8,0) ..  +(0,1) node[below,at start]{$i$}; 
%		      \draw[fill] (-1.6,0) circle (2.25pt) node[right]{$r$};
		\end{tikzpicture} \ + \ 
        \begin{tikzpicture}[very thick, baseline=(current bounding box.center)]
		    \node[scale=1.3] at (-1.5,.3){$\displaystyle \sum_{\substack{\alpha+\beta+\gamma \\ =\langle i,\lambda\rangle - 1}}$};
		    \draw[->] (0,-.6) -- (0,-.3) arc (180:0:0.3) -- (0.6,-.6);
		    \node at (0,-.8) {$i$};
		    \draw[fill] (.6,-.3) circle (1.5pt) node[right] {$\gamma$}; 
		    \draw[fill] (.6,1.2) circle (1.5pt) node[right]{$\alpha$};
   		   \draw[<-] (0,1.5) -- (0,1.2) arc (180:360:0.3) -- (0.6,1.5);
		    \node at (.6,1.7){$i$};
		    \node at (0,.5){$i$};
		    \draw[fill] (.8,.5) circle (1.5pt) node[right]{$\spadesuit + \beta$};
		    \draw[->] (.5,.75) arc(90:450:0.3);
		    \node[scale=1.3] at (1.5,1.3){$\lambda$};
	    \end{tikzpicture}
    \end{equation}
    
The cyclic duality in $\catQG$ (see \cite[\S1.2]{BHLW16}) means that rotating any relation by 180 degrees yields another valid relation in $\catQG$. In particular, rotating the second relation in \cite[Proposition~3.3]{BHLW16} yields the following for any $i\in I$, $\lambda\in X$, and $s\geq 0$:
\begin{equation}\label{eq:CurlRel}
    \begin{tikzpicture}[very thick, baseline=(current bounding box.center), scale=1.2]
        \draw[<-] (1,0) to [out=90, in=85](1.05,0.5);
        \draw (1.05,0.5) arc (-175:175:2mm);
        \draw (1.05,0.46) to [out=95, in=270] (1,1);
        \node at (1,-.25){$i$};
        \node[scale=1.2] at (1.6,0){$\lambda$};
        \filldraw (1.44,0.5) circle (1.5pt);
        \node at (1.7,0.5){$s$};
    \end{tikzpicture} \ = \
    \begin{tikzpicture}[very thick, baseline=(current bounding box.center),scale=1.2]
        \node[scale=1.3] at (-1.8,-.1){$\displaystyle \sum_{\alpha+\beta=s+\langle i,\lambda\rangle}$};
    %    \node at (-1.6,0){$\cdots$};
        \draw[->] (0,.5)--(0,-.5);
        \filldraw (0,0) circle (1.5pt);
        \node[scale=1] at (-.4,0) {$\alpha$};
        \node at (0,-.75){$i$};
        \draw[postaction={decorate,decoration={markings,
         mark=at position .6 with {\arrow[scale=1.3]{>}}}},very thick,scale=.66] (1.5,0) circle (15pt);
        \node at (.76,.45){$i$};
        \draw[fill] (1.23,.25) circle[radius=1.5pt];
        \node[scale=0.9] at (1.75,.4){$\spadesuit+\beta$};
        \node[scale=1.2] at (2.13,-.3){$\lambda$};
    \end{tikzpicture}
\end{equation}

\subsubsection{Categorified quantum group}

The categorified quantum group $\catQGdot=\catQGk$ is the idempotent completion of $\catQG$. More precisely, the morphism space $\catQGdot(\lambda,\mu)$ is the idempotent completion of $\catQG(\lambda,\mu)$. It is a graded $\kk$-linear 2-category. The starred variants $\catQGstar$ and $\catQGdotstar$ are defined by adding stars to the morphisms categories.

The split Grothendieck group $K_0(\catQGdot)$ of $\catQGdot$ is a locally unital $\ZZ$-algebra:
\begin{equation}
    K_0(\catQGdot)=\bigoplus_{\lambda,\mu\in \ob(\catQG)}1_{\mu}K_0(\catQGdot(\lambda,\mu))1_{\lambda}
\end{equation}

\noindent with multiplication induced by horizontal composition. There is an isomorphism of locally unital $\ZZ$-algebras
\begin{equation}
    \QGdotZ\longrightarrow K_0(\catQGdot).
\end{equation}

\noindent where $\QGdotZ$ is the integral idempotent quantum group from \S\ref{subsec:QG}. 

This map exists and is surjective by \cite[Theorem~1.1]{KL10} and injectivity is equivalent to non-degeneracy of the graphical calculus by Theorem~1.2 in loc. cit. In finite-type non-degeneracy follows from the decategorification of cyclotomic quotients proved by \cite{KK12} and \cite{Web16} (c.f. \S\ref{subsec:cyclo}).

\subsection{Bubbles and symmetric functions}\label{subsec:BubblesInU}

We will use the following shorthand for bubbles:
\begin{equation}\label{eq:SpadeNotation}
        \begin{tikzpicture}[scale=.8,very thick, baseline=(current bounding box.center)]
        \draw[postaction={decorate,decoration={markings,
    mark=at position .5 with {\arrow[scale=1.3]{<}}}},very thick] (0,1) circle (15pt);
        \node at (-.54,1.65){$i$};
        \draw[fill] (.38,1.35) circle[radius=2pt];
        \node[scale=.7] at (.85,1.65){$\spadesuit+r$};
        \node[scale=1.2] at (1,.8){$\lambda$};
        \end{tikzpicture}
         := 
        \quad
        \begin{tikzpicture}[scale=.8,very thick, baseline=(current bounding box.center)]
        \draw[postaction={decorate,decoration={markings,
    mark=at position .5 with {\arrow[scale=1.3]{<}}}},very thick] (0,1) circle (15pt);
        \node at (-.54,1.65){$i$};
        \draw[fill] (.38,1.35) circle[radius=2pt];
        \node[scale=.7] at (1.2,1.6){$\langle i, \lambda \rangle - 1+r$};
        \node[scale=1.2] at (1,.8){$\lambda$};
        \end{tikzpicture} \qquad
    \begin{tikzpicture}[scale=.8,very thick, baseline=(current bounding box.center)]
        \draw[postaction={decorate,decoration={markings,
    mark=at position .5 with {\arrow[scale=1.3]{>}}}},very thick] (0,1) circle (15pt);
        \node at (-.54,1.65){$i$};
        \draw[fill] (.38,1.35) circle[radius=2pt];
        \node[scale=.7] at (.85,1.65){$\spadesuit+r$};
        \node[scale=1.2] at (1,.8){$\lambda$};
        \end{tikzpicture}  := \quad
        \begin{tikzpicture}[scale=.8,very thick, baseline=(current bounding box.center)]
        \draw[postaction={decorate,decoration={markings,
    mark=at position .5 with {\arrow[scale=1.3]{>}}}},very thick] (0,1) circle (15pt);
        \node at (-.54,1.65){$i$};
        \draw[fill] (.38,1.35) circle[radius=2pt];
        \node[scale=.7] at (1.2,1.6){$-\langle i,\lambda \rangle -1 + r$};
        \node[scale=1.2] at (1,.8){$\lambda$};
        \end{tikzpicture}
\end{equation}

\noindent where $\lambda\in X$, $i\in I$, and $r\in\ZZ$. When $r>|\langle i,\lambda\rangle|$ these bubbles have degree $2r$. When $r\leq |\langle i,\lambda\rangle|$ the number of dots is negative and so this doesn't make sense as a 2-morphism. We resolve this by adding this ``fake bubble'' as a new generator of degree $2r$ and impose the following relations: bubbles of negative degree are zero, degree zero bubbles satisfy
\begin{equation}
    \begin{tikzpicture}[scale=.7,very thick, baseline=(current bounding box.center)]
        \draw[postaction={decorate,decoration={markings,
    mark=at position .6 with {\arrow[scale=1.3]{<}}}},very thick] (0,0) circle (15pt);
        \node at (-.54,.65){$i$};
        \draw[fill] (.38,.35) circle[radius=2pt];
        \node[scale=.7] at (1.03,.55){$\spadesuit+0$};
        \node[scale=1.2] at (1.5,-.2){$\lambda$};
        \end{tikzpicture}\ 
        =c_{i,\lambda} 1_{\lambda}, 
        %\quad \small{\langle i,\lambda\rangle \leq 0}\text{, } 
        \hspace{1in}
        \begin{tikzpicture}[scale=.7,very thick, baseline=(current bounding box.center)]
        \draw[postaction={decorate,decoration={markings,
    mark=at position .6 with {\arrow[scale=1.3]{>}}}},very thick] (0,0) circle (15pt);
        \node at (-.54,.65){$i$};
        \draw[fill] (.38,.35) circle[radius=2pt];
        \node[scale=.7] at (1.03,.55){$\spadesuit+0$};
        \node[scale=1.2] at (1.5,-.2){$\lambda$};
        \end{tikzpicture}\ 
        =c_{i,\lambda}^{-1}1_{\lambda},% \quad \langle i, \lambda \rangle \geq 0.
\end{equation}

\noindent and higher degree bubbles satisfy the equations arising from the homogeneous terms in $x$ of the infinite Grassmannian equation:
\begin{equation}\label{eq:InfGrass}
\left(
\begin{tikzpicture}[very thick, baseline=(current bounding box.center)]
        \node[scale=1.4] at (-1.25,0){$\displaystyle \sum_{r=0}^\infty$};
        \draw[postaction={decorate,decoration={markings,
    mark=at position .6 with {\arrow[scale=1.3]{>}}}},very thick,scale=.66] (0,0) circle (15pt);
        \node at (-.34,.45){$i$};
        \draw[fill] (.23,.25) circle[radius=1.5pt];
        \node[scale=.7] at (.73,.4){$\spadesuit+r$};
        \node[scale=1.3] at (.63,-.3){$\lambda$};
        \node[scale=1.2] at (1.4,0){$x^r$};
        \end{tikzpicture} \right)
\left(
\begin{tikzpicture}[very thick, baseline=(current bounding box.center)]
        \node[scale=1.4] at (-1.25,0){$\displaystyle \sum_{s=0}^\infty$};
        \draw[postaction={decorate,decoration={markings,
    mark=at position .6 with {\arrow[scale=1.3]{<}}}},very thick,scale=.66] (0,0) circle (15pt);
        \node at (-.34,.45){$i$};
        \draw[fill] (.23,.25) circle[radius=1.5pt];
        \node[scale=.7] at (.73,.4){$\spadesuit+s$};
        \node[scale=1.3] at (.63,-.3){$\lambda$};
        \node[scale=1.2] at (1.4,0){$x^s$};
        \end{tikzpicture} \right)
    =\ 1_{\lambda}.
\end{equation}

\noindent From this it follows that all fake bubbles can actually be written in terms of real bubbles. 

Recall the graded algebra $\Pi=\bigotimes_{i\in I}\Sym$ from \S\ref{subsec:SymmetricFunc}. For any $\lambda\in X$, there is a homomorphism of graded algebras
\begin{equation}
    b_{\lambda}:\Pi\longrightarrow \catQGdotstar(1_{\lambda},1_{\lambda})
\end{equation}
    
\noindent sending
\begin{equation} 
    (-1)^re_{i,r}\longmapsto c_{i,\lambda}~~
    \begin{tikzpicture}[very thick, baseline=(current bounding box.center)]
        \draw[postaction={decorate,decoration={markings,
        mark=at position .5 with {\arrow[scale=1.3]{>}}}},very thick] (0,.5) circle (15pt);
        \node at (-.5,1.15){$i$};
        \draw[fill] (.38,.85) circle[radius=2pt];
        \node[scale=.8] at (.93,1.15){$\spadesuit+r$};
        \node[scale=1.3] at (1,.5){$\lambda$};
    \end{tikzpicture}
    \qquad h_{i,r}\longmapsto c_{i,\lambda}^{-1}~~
    \begin{tikzpicture}[very thick, baseline=(current bounding box.center)]
        \draw[postaction={decorate,decoration={markings,
        mark=at position .5 with {\arrow[scale=1.3]{<}}}},very thick] (0,.5) circle (15pt);
        \node at (-.5,1.15){$i$};
        \draw[fill] (.38,.85) circle[radius=2pt];
        \node[scale=.8] at (.93,1.15){$\spadesuit+r$};
        \node[scale=1.3] at (1,.5){$\lambda$};
    \end{tikzpicture}
\end{equation}

\noindent By non-degeneracy, this is an isomorphism.

\begin{remark}
    In \cite[\S3.4.4]{Lau12} and \cite[\S5.1]{CL15} the authors identify elementary symmetric fuctions with \emph{clockwise bubbles}. Our homomorphism differs from theirs by the automorphism of $\Pi$ interchanging $(-1)^re_{i,r}$ and $h_{i,r}$ and fixing $p_{i,r}$. We believe our choice is more natural given the relationship between bubbles and the deformed cyclotomic relation \eqref{eq:defcyclorel}.
\end{remark}

\begin{remark}
    Observe that we now have isomorphisms relating symmetric functions, bubbles, and the Cartan subalgebra of $\g$. More precisely, for any $\lambda\in X$ there are isomorphisms of graded algebras:
    \begin{equation}
        \begin{tikzcd}
            & \Pi \ar[ld] \ar[rd] &  \\ 
            1_{\lambda}U(\h[t])1_{\lambda} \ar[rr] &  & \catQGdotstar(1_{\lambda},1_{\lambda})
        \end{tikzcd}
    \end{equation}

    \noindent where the left diagonal map was defined in \ref{subsec:SymmetricFunc}. The horizontal map descends to the restriction of the isomorphism $\currdot\cong\Tr(\catQGstar)$ in the trace. 
\end{remark}

It will be convenient for us to consider the image under $b_{\lambda}$ of the generating functions $e_i(x)$, $h_i(x)$, and $p_i(x)$ from \S\ref{subsec:SymmetricFunc}. For example, the relation \eqref{eq:InfGrass} is just the image under $b_{\lambda}$ of $h_i(x)e_i(-x)=1$. 

Generating functions also greatly simplify the statement of bubble slide equations. To save space, we do not state these diagrammatically. Instead for $i\in\pm I$ we use $1_{\E_{i}}$  to denote a string labelled by $\lvert i\rvert$, oriented upward if $i\in +I$ and downward if $i\in -I$, and let $y$ to denote a dot on that string. Recall that ``$\circ$'' denotes horizontal composition in $\catQGdot$.

The reader is invited to compare these with the equations \eqref{eq:GenFuncVariables}.

\begin{lemma}\label{Lem:SlidesCatQG}
    Take $i\in \pm I$, $j\in I$, and $\lambda\in X$. Then
    \begin{align}
        \begin{split}
            b_{\lambda+\alpha_i}(e_j(x))\circ1_{\E_{i}}  &= (1+yx)^{\langle j,i\rangle}\circ b_{\lambda}(e_j(x)), \\
            b_{\lambda+\alpha_i}(h_j(x))\circ1_{\E_{i}}  &= (1-yx)^{-\langle j,i\rangle}\circ b_{\lambda}(h_j(x)), \\
            b_{\lambda+\alpha_i}(p_j(x))\circ1_{\E_{i}}  &= 1_{\E_{i}}\circ b_{\lambda}(p_j(x))+ \langle j,i\rangle \frac{yx}{1-yx}.
        \end{split}
    \end{align}
\end{lemma}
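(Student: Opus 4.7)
The plan is to prove the three identities in sequence: first the $e_j$ slide by a direct case analysis using the KLR and extended $\mf{sl}_2$ relations in $\catQGdot$, then the $h_j$ slide by applying the infinite Grassmannian relation \eqref{eq:InfGrass}, and finally the $p_j$ slide by taking a formal logarithmic derivative of the $h_j$ identity.

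For the $e_j$ slide, unpacking the definition of $b_\lambda$ reduces the statement, for each $r \geq 0$, to showing that pulling a clockwise $j$-bubble with $\spadesuit + r$ dots in region $\lambda + \alpha_i$ past the $i$-strand yields the $x^r$-coefficient of $(1+yx)^{\langle j,i\rangle}$ times the corresponding right-bubble in region $\lambda$, with the scalar ratio $c_{j,\lambda+\alpha_i}/c_{j,\lambda} = t_{j|i|}^{\pm 1}$ absorbed into the formula. I would split into three cases. When $\langle j,|i|\rangle = 0$, the KLR bigon relation \eqref{eq:black-bigon} lets the bubble slide freely past the strand, matching $(1+yx)^0 = 1$. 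When $|i| \neq j$ and $\langle j,|i|\rangle = -1$, resolving the bigon formed as the bubble passes the strand produces exactly the two-term combination $1 + t_{j|i|}\,yx$, matching $(1+yx)^{\pm 1}$ with the sign fixed by the orientation. When $|i| = j$, the extended $\mf{sl}_2$ relation \eqref{eq:Extendedsl2} and the curl relation \eqref{eq:CurlRel} combine to rewrite the bubble-on-strand configuration as a sum of dotted strands coupled to smaller bubbles, and induction on $r$ (with \eqref{eq:InfGrass} converting fake bubbles back into the generating-function formalism) identifies the result as the $x^r$-coefficient of $(1 \pm yx)^{\pm 2}$.

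Assuming the $e_j$ slide, apply $b_{\lambda+\alpha_i}$ to the identity $h_j(x)\,e_j(-x) = 1$ extracted from \eqref{eq:InfGrass}, slide both factors past the $i$-strand, and invert to obtain the $h_j$ identity. For the $p_j$ slide, observe that within $\End(\E_i 1_\lambda)[[x]]$ the right bubbles, left bubbles, and the dot $y$ pairwise commute, by the interchange law of the 2-category and isotopy of diagrams supported in disjoint regions. The classical Newton identity $p_j(x)\,h_j(x) = x\,h_j'(x)$ in $\Pi[[x]]$ therefore transports, after applying $b_{\lambda + \alpha_i}(\cdot) \circ 1_{\E_i}$ and substituting the $h_j$ slide, to a relation of the shape
\begin{equation*}
    b_{\lambda+\alpha_i}(p_j(x)) \circ 1_{\E_i} \;=\; 1_{\E_i} \circ b_\lambda(p_j(x)) \;+\; x\,\tfrac{d}{dx}\log(1-yx)^{-\langle j,i\rangle},
\end{equation*}
and the logarithmic derivative on the right evaluates to $\langle j,i\rangle\,yx/(1-yx)$, as claimed.

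The main obstacle is the $|i| = j$ case of the $e_j$ slide: here the extended $\mf{sl}_2$ and curl relations each introduce full sums over both real and fake bubbles, and recognizing the closed-form factor $(1 \pm yx)^{\pm 2}$ requires a Vandermonde-type collection of binomial coefficients emerging from the repeated application of \eqref{eq:Extendedsl2} and \eqref{eq:InfGrass}. The scalar identity $c_{j,\lambda+\alpha_i} = t_{jj}\,c_{j,\lambda} = c_{j,\lambda}$ simplifies bookkeeping in this case, and the downward-oriented case $i \in -I$ can be deduced from the upward case via the cyclic duality of $\catQGdot$ together with the sign conventions $\alpha_{-i} = -\alpha_i$ and $\langle j,-i\rangle = -\langle j,i\rangle$.
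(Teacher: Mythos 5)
Your proof is correct and proves the lemma, but it is substantially more self-contained than the paper's own argument, which simply cites \cite[\S3.2]{BHLW16} for the $e_j$ and $h_j$ slides and then derives the $p_j$ slide from the grouplike/primitive distinction in the coproduct $\delta$ together with \eqref{eq:GenFuncVariables}. Your reproof of the $e_j$ slide from the KLR, extended $\mathfrak{sl}_2$, and curl relations is the standard calculation that \cite{BHLW16} carries out, and your derivation of the $h_j$ slide by inverting the $e_j(-x)$ slide via the infinite Grassmannian relation \eqref{eq:InfGrass} is an equally valid alternative to reading the $h_j$ slide off directly from \cite{BHLW16}. Your logarithmic-derivative argument for the $p_j$ slide is exactly the content of the paper's cryptic hint: the Newton identity $p_j(x) = x\,\tfrac{d}{dx}\log h_j(x)$ is the concrete manifestation of $p_j$ being primitive while $h_j$ is grouplike, and your substitution of the $h_j$ slide followed by the product rule correctly produces the additive term $\langle j,i\rangle\,yx/(1-yx)$ after cancelling the invertible factor $b_{\lambda+\alpha_i}(h_j(x))\circ 1_{\E_i}$. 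The trade-off is clear: the paper saves space by outsourcing the first two identities, while your version makes the whole argument readable without opening \cite{BHLW16} but leaves the binomial bookkeeping in the $|i|=j$ case of the $e_j$ slide as a sketch that a careful reader would still need to expand.
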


\begin{proof}
    The first two follow directly from \cite[\S3.2]{BHLW16}. The third follows from these, the coproduct $\delta$, and \eqref{eq:GenFuncVariables}.
\end{proof}

\subsection{2-representations and cyclotomic quotients}\label{subsec:cyclo}

In this subsection we recall the definition of a 2-representation of $\catQGdot$ and 2-natural transformations between them. We also recall the undeformed and deformed cyclotomic quotients of $\catQGdot$ - 2-representations of $\catQGdot$ that categorify irreducible modules over the quantum group.

\subsubsection{2-representations}

A 2-representation of $\catQGdot$ on a category $\mc{M}$ consists of a weight decomposition $\mc{M}=\bigoplus_{\lambda\in X}\mc{M}(\lambda)$ into subcategories and compatible functors from $\catQGdot(\lambda,\mu)$ to the category of functors $\mc{M}(\lambda)\to\mc{M}(\mu)$. A 2-representation is graded if $\mc{M}$ is graded and all the functors are graded. This induces a 2-representation of $\catQGdotstar$ on $\mc{M}^*$.

A graded 2-representation of $\catQGdot$ on $\mc{M}$ induces a $K_0(\catQGdot)\cong \QGdotZ$-module structure on
\begin{equation}
    K_0(\mc{M})=\bigoplus_{\lambda\in\ob(\catQG)}1_{\lambda}K_0(\mc{M}(\lambda))
\end{equation}

\noindent compatible with the action of $\ZZ[q^{\pm 1}]$.

A 2-natural transformation $\eta$ between 2-representations on $\mc{M}$ and $\mc{N}$ consists of functors
\begin{equation}
    \eta(\lambda):\mc{M}(\lambda)\longrightarrow \mc{N}(\lambda)
\end{equation}

\noindent for all $\lambda\in X$ together with compatible natural isomorphisms
\begin{equation}
    \eta(x):x_{\mc{N}}\circ\eta(\lambda)\Rightarrow \eta(\mu)\circ x_{\mc{M}}
\end{equation}

\noindent for any 1-morphism $x\in\ob(\catQG(\lambda,\mu))$, where $x_{\mc{M}}$ (resp.\ $x_{\mc{N}}$) denotes the functor $\mc{M}(\lambda)\to \mc{M}(\mu)$ (resp.\ $\mc{N}(\lambda)\to\mc{N}(\mu)$) associated to $x$. We call $\eta$ a 2-natural isomorphism if the $\eta(\lambda)$ are all equivalences. 

If $\mc{M}$, $\mc{N}$ and all $\eta(\lambda)$ and $\eta(x)$ are graded we call $\eta$ a graded 2-representation. This induces a 2-representation from $\mc{M}^*$ to $\mc{N}^*$. A graded 2-representation of $\catQGdot$ induces $\QGdotZ$-module homomorphism $K_0(\mc{M})\to K_0(\mc{N})$. These are isomorphisms if $\eta$ is a 2-natural isomorphism.

\subsubsection{Cyclotomic quotients}

If $\lambda\in X^+$, the corresponding \emph{deformed cyclotomic quotient} $\defcyclolam=\defcyclolamk$ of $\catQGdot$ is the graded category obtained from $\bigoplus_{\mu\in X}\catQGdot(\lambda,\mu)$ by imposing the following global relation for any $i\in I$:
\begin{equation}
    \begin{tikzpicture}[very thick, baseline=(current bounding box.center)]
        \node at (0,0){$\cdots$};
        \draw[->] (.5,-.5)--(.5,.5);
        \node[scale=1.2] at (1,.2){$\lambda$};
        \node at (.5,-.7){$i$};
    \end{tikzpicture}\hspace{.1in}
    =0;
\end{equation}

\noindent that is, any diagram with an upward string at the far right is equal to zero. This preserves the direct sum decomposition $\defcyclolam=\bigoplus_{\mu\in X}\defcyclolam(\mu)$ according to the left-most weight, and horizontal composition in $\catQGdot$ induces an action of $\catQGdot$ on $\defcyclolam$ by placing a diagram on the left. So $\defcyclolam$ is a graded 2-representation of $\catQGdot$.

Taking $s=0$ in \eqref{eq:CurlRel} yields the deformed cyclotomic relation in $\defcyclolam$:
\begin{equation}\label{eq:defcyclorel}
    \begin{tikzpicture}[very thick, baseline=(current bounding box.center)]
        \node[scale=1.3] at (-2,0){$\displaystyle \sum_{r=0}^{\langle i,\lambda \rangle}$};
    %    \node at (-1.6,0){$\cdots$};
        \draw[->] (-1,.5)--(-1,-.5);
        \filldraw (-1,0) circle (1.5pt);
        \node[scale=.8] at (-.3,.1) {$\langle i,\lambda \rangle - r$};
        \node at (-1,-.75){$i$};
        \draw[postaction={decorate,decoration={markings,
         mark=at position .6 with {\arrow[scale=1.3]{>}}}},very thick,scale=.66] (1.5,0) circle (15pt);
        \node at (.76,.45){$i$};
        \draw[fill] (1.23,.25) circle[radius=1.5pt];
        \node[scale=.7] at (1.63,.4){$\spadesuit+r$};
        \node[scale=1.2] at (1.9,-.3){$\lambda$};
    \end{tikzpicture} \ = \ 
    \begin{tikzpicture}[very thick, baseline=(current bounding box.center)]
        \draw[<-] (1,0) to [out=90, in=85](1.05,0.5);
        \draw (1.05,0.5) arc (-175:175:2mm);
        \draw (1.05,0.46) to [out=95, in=270] (1,1);
        \node at (1,-.25){$i$};
        \node[scale=1.2] at (1.6,0){$\lambda$};
    \end{tikzpicture} \ = \ 0.
\end{equation}

The \emph{undeformed cyclotomic quotient} $\cyclolam=\cyclolamk$ of $\catQGdot$ is obtained from $\defcyclolam$ by setting any diagram with a positive degree bubble at the far right equal to zero. In $\cyclolam$ we have the undeformed cyclotomic relation:
\begin{equation}\label{eq:UndefCycloRel}
    \begin{tikzpicture}[very thick, baseline=(current bounding box.center)]
        \node at (-3,.5){$\cdots$};
        \draw[->] (-2.5,1)--(-2.5,0);
        \node at (-2.5,-.25){$i$};
        \filldraw (-2.5,.5) circle (1.5pt);
        \node[scale=.8] at (-2,.5){$\langle i, \lambda \rangle$};
        \node[scale=1.2] at (-2,1){$\lambda$};
    \end{tikzpicture} \ = \ 0
\end{equation}

\noindent at the far right of any diagram. The weight decomposition and action of $\catQGdot$ on $\defcyclolam$ are preserved, so $\cyclolam$ is also a graded 2-representation of $\catQGdot$.

It was proved independently by \cite[Corollary~3.22]{Web17} and \cite[Theorem~6.2]{KK12} that for any $\lambda\in X^+$ there are isomorphisms of $\QGdotZ$-modules:
\begin{equation}\label{eq:CyclotomicGG}
    K_0(\defcyclolam)\cong K_0(\cyclolam)\cong \VZq{\lambda},
\end{equation}

\noindent where $\VZq{\lambda}$ is the irreducible $\QGdotZ$-module from \S\ref{subsec:QG}.

\section{Bubble slides in the tensor product algebra}\label{sec:DeformedTPC}

Recall from \S\ref{subsec:TPWeylMods} that we have fixed $n\in \NN$ and a sequence $\ulambda=(\lam{1},\ldots ,\lam{n})$ of dominant weights and that, if $k\in [1,n]$ and $i\in I$, then $\lam{k}_i=\langle i,\lam{k}\rangle$. The corresponding graded algebra $\ATP$ consists of certain symmetric polynomials in a set $Z$ of indeterminates.

In this section we recall Webster's categorification $\TP$ of a tensor product $\VZqulam$ of irreducibles for $\QGdotZ$ and its deformation $\defTP$, and prove equations for passing bubbles through red strings in $\defTP$ analogous to the bubble slide equations in $\catQGdot$ (see \S\ref{subsec:BubblesInU}). Note that to match with the conventions of \cite{BHLW16} our categories differ from Webster's by a reflection in a vertical line. Also, since we work with a fixed sequence $\ulambda$ of dominant weights we will label red strings in $\TP$ by $(k)$ with $k\in [1,n]$ rather than by the actual weights $\lam{k}$.

The category $\TP$ is equivalent to the category of graded projective modules over the tensor product algebra $T^{\ulambda}$ from \cite{Web17}, but it is more convenient for us to define $\TP$ by generators and relations as in \cite{Web15}. Morphisms spaces in $\TP$ are spanned by string diagrams containing red strings which separate tensor factors and there is a graded 2-representation of $\catQGdot$ on $\TP$ by placing diagrams on the left and composing.

The category $\defTP$ is equivalent to the category of graded projective modules over the algebra in \cite[\S3.2]{Web12}. It is obtained by deforming the defining relations for $\TP$ over $\ATP$ such that setting all $z\in Z$ equal to zero recovers $\TP$. The effect of this is that dots in $\defTP$, rather than being nilpotent, can have any $z\in Z$ as a generalized eigenvalue (see \S\ref{sec:unfurling}). The relationship between $\TP$ and $\defTP$ is analogous to that between the undeformed $\cyclolam$ and deformed $\defcyclolam$ cyclotomic quotients of $\catQGdot$. There is a graded 2-representation of $\catQGdot$ on $\defTP$ and morphism spaces in the starred category $\defTPstar$ are graded right $\ATP$-modules.

In \S\ref{subsec:TPSymFun} we prove new equations for passing bubbles through red strings in $\defTP$. We state these in terms of the coproduct $\delta$ on $U(\h[t])$ which shows that the actions of $\ATP$ on $\WWulam$ and $\defTPstar$ are compatible. We also the bubble slides in terms of the generating functions $e_i(x)$, $h_i(x)$, and $p_i(x)$ for symmetric functions. This allows us to study the spectrum of a dot in $\TPstar$ in \S\ref{sec:unfurling}.

\subsection{Tensor product algebras}

\subsubsection{An auxiliary category}\label{subsec:TPC}

We begin by defining an auxiliary graded $\kk$-linear category $\TPaux$ from which both $\TP$ and $\defTP$ can be obtained. We will not need $\TPaux$ outside this subsection.

Objects in $\TPaux$ are formal direct sums of grading shifts of \emph{Stendhal pairs} $(\ui,\kappa)$, where $\ui\in (\pm I)^m$ for some $m\in \NN$ and $\kappa$ is a weakly increasing function from $[1,2,\ldots,n+1]$ to $[0,1,\ldots, m]$ with $\kappa(n+1)=m$ (the equivalent objects in \cite{Web15} and \cite{Web17} are called tricolore quadruples and double Stendhal triples respectively).

Morphisms in $\TPaux$ are $\kk$-linear combinations of grading shifts of \emph{Stendhal diagrams} (double Stendhal diagrams in \cite{Web17}). A Stendhal diagram consists of finitely many strings in $\RR\times [0,1]$. Each string is either:
\begin{itemize}
    \item coloured black, given an orientation, labelled with an element $i\in I$, and decorated with finitely many dots; or
    \item coloured red and labelled with $(k)$ for $k\in [1,n]$.
\end{itemize}

\noindent Diagrams have no triple points or tangencies and any open end of a string must meet one of the lines $y=0$ or $y=1$ at a distinct point from all other strings. Red strings have no critical points (that is, they never turn back on themselves) and two red strings cannot cross. Reading the labels of red strings from left to right as the intersect a horizontal line $y=c$ must give the sequence $(n),(n-1),\ldots ,(1)$.

Regions between strings are labelled by weights. The right-most region is labelled by 0 and the labels of the other regions are determined by the consistency rules in \eqref{eq:WeightsAcrossStrands} and the additional condition:
\begin{equation}\label{LabellingAcrossStrands}
     \begin{tikzpicture}[baseline,very thick]
        \draw[wei,postaction={decorate,decoration={markings,
        mark=at position .5 with
        }}] (0,-.5) -- node[below,at start]{$(k)$}  (0,.5);
        \node at (-1,0) {$\mu+\lam{k}$};
        \node at (1,.05) {$\mu$};
    \end{tikzpicture}
\end{equation}

\noindent Since the labelling of regions is uniquely determined we will often not record it.

Take a Stendhal diagram. As with 2-morphisms in $\catQG$, recording the labels and orientations of the black strings along the bottom of the diagram yields a signed sequence $\ui=(i_1,\ldots ,i_m)\in (\pm I)^m$. For $k\in[1,n]$, let $\kappa(k)\in \NN$ be the number of black strings to the right of the red $(k)$-string reading along the line $y=0$, and let $\kappa(n+1)=m$. This defines a weakly increasing function from $[1,\ldots,n+1]$ to $[0,\ldots ,m]$, so $(\ui,\kappa)$ is a Stendahl pair. Similarly, reading the top of the diagram we get an Stendhal pair $(\ui',\kappa')$. We assign the diagram a degree $d$ by taking the sum of the degrees of elementary diagrams: in addition to those set in $\catQG$, such as \eqref{eq:DegDiagrams3}, we let
\begin{equation}
    \deg
    \begin{tikzpicture}[very thick,baseline=(current bounding box.center)]
        \draw[<-] (0,1)--(1,0);
        \node at (1,-.3){$i$};
        \draw[-,wei] (1,1)--(0,0);
        \node at (0,-.3){$(k)$};
    \end{tikzpicture}\
    =\ \deg 
    \begin{tikzpicture}[very thick,baseline=(current bounding box.center)]
        \draw[<-] (1,1)--(0,0);
        \node at (0,-.3){$i$};
        \draw[-,wei] (0,1)--(1,0);
        \node at (1,-.3){$(k)$};
    \end{tikzpicture}\ = \ 0,
\end{equation}
\begin{equation}
    \deg 
    \begin{tikzpicture}[very thick,baseline=(current bounding box.center)]
        \draw[->] (1,1)--(0,0);
        \node at (0,-.3){$i$};
        \draw[-,wei] (0,1)--(1,0);
        \node at (1,-.3){$(k)$};
    \end{tikzpicture}\
    =\ 
    \deg \begin{tikzpicture}[very thick,baseline=(current bounding box.center)]
        \draw[->] (0,1)--(1,0);
        \node at (1,-.3){$i$};
        \draw[-,wei] (1,1)--(0,0);
        \node at (0,-.3){$(k)$};
    \end{tikzpicture}\
    =\ \lam{k}_i.
\end{equation}

\noindent This diagram is a morphism from $(\ui,\kappa)$ to $(\ui',\kappa')\langle d\rangle$. Composition of morphisms is induced by vertical composition of diagrams as in \S\ref{subsec:DefCatQG}.

We sometimes conflate a Stendhal pair $S$ and the identity $1_S$ on $S$ and refer to, for example, the red string in $S$ labelled by $(k)$.

Morphisms in $\TPaux$ are subject to local relations: black strings satisfy the relations of $\catQG$ (see \S\ref{subsec:DefCatQG}) and for any $i\in I$ and $k\in [1,n]$ the following hold as well as their reflections in a vertical line:
\begin{equation}\label{eq:DotsAndRed}
    \begin{tikzpicture}[very thick,baseline,scale=.5]
        \draw[<-](-3,0) +(-1,-1) -- +(1,1) node[below,at start]{$i$};
        \draw[wei](-3,0) +(1,-1) -- +(-1,1) node[below,at start]{$(k)$};
        \fill (-3.5,-.5) circle (5pt);
        \node at (-1,0) {=};
        \draw[<-](1,0) +(-1,-1) -- +(1,1) node[below,at start]{$i$};
        \draw[wei](1,0) +(1,-1) -- +(-1,1) node[below,at start]{$(k)$};
        \fill (1.5,.5) circle (5pt);
    \end{tikzpicture} \hspace{1cm}
    \begin{tikzpicture}[very thick,baseline,scale=.5,x={(-1,0)},y={(0,1)}]
        \draw[->](-3,0) +(-1,-1) -- +(1,1) node[below,at start]{$i$};
        \draw[wei](-3,0) +(1,-1) -- +(-1,1) node[below,at start]{$(k)$};
        \fill (-3.5,-.5) circle (5pt);
        \node at (-1,0) {=};
        \draw[->](1,0) +(-1,-1) -- +(1,1) node[below,at start]{$i$};
        \draw[wei](1,0) +(1,-1) -- +(-1,1) node[below,at start]{$(k)$};
        \fill (1.5,.5) circle (5pt);
    \end{tikzpicture}
\end{equation}
\begin{equation}\label{eq:UpAndRed}
    \begin{tikzpicture}[very thick,baseline=(current bounding box.center),scale=.5]
        \draw[->] (-2.8,0)  +(0,-1) .. controls (-1.2,0) ..  +(0,1) node[below,at start]{$i$};
        \draw[wei] (-1.2,0)  +(0,-1) .. controls (-2.8,0) ..  +(0,1) node[below,at start]{$(k)$};
        \node at (-.3,0) {=};
        \draw[wei] (3.3,0)  +(0,-1) -- +(0,1) node[below,at start]{$(k)$};
        \draw[->] (2.1,0)  +(0,-1) -- +(0,1) node[below,at start]{$i$};
    \end{tikzpicture} %\hspace{1cm}
    %
%    \begin{tikzpicture}[very thick,baseline=(current bounding box.center),scale=.5]
%        \draw[->,x={(-1,0)},y={(0,1)}] (1.2,0)  +(0,-1) .. controls (2.8,0) ..  +(0,1) node[below,at start]{$i$};
%        \draw[wei,x={(-1,0)},y={(0,1)}] (2.8,0)  +(0,-1) .. controls (1.2,0) ..  +(0,1) node[below,at start]{$(k)$};
%        \node at (-.3,0) {=};
%        \draw[wei] (1.6,0)  +(0,-1) -- +(0,1) node[below,at start]{$(k)$};
%        \draw[->] (2.8,0)  +(0,-1) -- +(0,1) node[below,at start]{$i$};
%    \end{tikzpicture}
\end{equation}

\noindent We also impose the relations \cite[(4.4a)-(4.4c)]{Web17} and their reflections in a vertical line for any labelling of red and black strings (the reader can ignore the orientation on red strings). Finally we also set to zero any \emph{violated} diagrams; that is, diagrams which at some horizontal slice $y=c$ have a black string to the right of all red strings.

\subsubsection{Tensor product algebras}\label{subsec:DefTPC}

The following category was introduced in \cite[\S4]{Web17}, where it is presented as the category of graded projective modules over an algebra:

\begin{definition}\label{def:TPcat}
    Let $\TP$ be the idempotent completion of the category obtained from $\TPaux$ by imposing the following additional local relations as well as the reflection of \eqref{eq:UndeformedTPRels1} in a vertical line:
    \begin{equation}\label{eq:UndeformedTPRels1}
        \begin{tikzpicture}[very thick,baseline=(current bounding box.center),scale=.5]
            \draw[wei] (-1.2,0)  +(0,-1) -- +(0,1) node[below,at start]{$(k)$};
            \draw[<-] (-2.5,0)  +(0,-1) -- +(0,1) node[below,at start]{$i$};
            \fill (-2.5,0) circle (5pt) node[left=3pt]{$\lam{k}_i$};
            \node at (-.1,0) {=};
            \draw[<-] (1,0)  +(0,-1) .. controls (2.2,0) ..  +(0,1) node[below,at start]{$i$};
            \draw[wei] (2.2,0)  +(0,-1) .. controls (1,0) ..  +(0,1) node[below,at start]{$(k)$};
        \end{tikzpicture}
        %
%        \begin{tikzpicture}[very thick,baseline=(current bounding box.center),scale=.5]
%            \draw[<-,x={(-1,0)},y={(0,1)}] (1.2,0)  +(0,-1) .. controls (2.8,0) ..  +(0,1) node[below,at start]{$i$};
%            \draw[wei,x={(-1,0)},y={(0,1)}] (2.8,0)  +(0,-1) .. controls (1.2,0) ..  +(0,1) node[below,at start]{$(k)$};
%           \node at (-.3,0) {=};
%            \draw[wei] (1.6,0)  +(0,-1) -- +(0,1) node[below,at start]{$(k)$};
%            \draw[<-] (2.8,0)  +(0,-1) -- +(0,1) node[below,at start]{$i$};
%            \fill (2.8,0) circle (4pt) node[right=3pt]{$\lam{k}_i$};
%        \end{tikzpicture}
    \end{equation}
    \begin{equation}\label{eq:UndeformedTPRels2}
        \begin{tikzpicture}[very thick,baseline,scale=.5]
            \draw[<-] (-3,0)  +(1,-1) -- +(-1,1) node[at start,below]{$j$};
            \draw[<-] (-3,0) +(-1,-1) -- +(1,1)node [at start,below]{$i$};
            \draw[wei] (-3,0)  +(0,-1) .. controls (-2,0) .. node[below, at start]{$(k)$}  +(0,1);
            \node at (-1,0) {=};
            \draw[<-] (1,0)  +(1,-1) -- +(-1,1) node[at start,below]{$j$};
            \draw[<-] (1,0) +(-1,-1) -- +(1,1) node [at start,below]{$i$};
            \draw[wei] (1,0) +(0,-1) .. controls (0,0) ..  node[below, at start]{$(k)$} +(0,1);   
            \node at (2.6,0) {$+ $};
            \draw[<-] (10.85,0)  +(1,-1) -- +(1,1) node[midway,circle,fill,inner sep=1.8pt,label=right:{$q$}]{} node[at start,below]{$i$};
            \draw[<-] (10.35,0) +(-1,-1) -- +(-1,1) node[midway,circle,fill,inner sep=1.8pt,label=left:{$p$}]{} node [at start,below]{$i$};
            \draw[wei] (10.6,0) +(0,-1) -- node[below, at start]{$(k)$} +(0,1);
            \node[scale=1.3] at (6,-.6){$\displaystyle \delta_{i,j}\hspace{-1em}\sum_{p+q=\lam{k}_i-1} $}  ;
        \end{tikzpicture}
    \end{equation}
\end{definition}

\noindent It is a graded $\kk$-linear category. There is a weight decomposition $\TP=\bigoplus\TP(\mu)$ according to the left-most weight $\mu\in X$ of a diagram, and placing diagrams in $\catQGdot$ on the left of those in $\TP$ defines a graded 2-representation of $\catQGdot$ on $\TP$.

The 2-representation of $\catQGdot$ on $\TP$ induces a $\QGdotZ$-action on the split Grothendieck group $K_0(\TP)$. By \cite[Theorems~4.38]{Web16} there is an isomorphism of $\QGdotZ$-modules
\begin{equation}
    K_0(\TP)\longrightarrow \VZqulam,
\end{equation}

\noindent where $\VZqulam$ is the product of irreducible modules from \S\ref{subsec:QG}. To reflect this, we will sometimes refer to $\TP$ as a \emph{tensor product categorification}.

\subsubsection{Deformed tensor product algebras}

Recall the graded algebras $\Pi=\bigotimes_{i\in I}$ and $\ATP$ and the projections $a_k:\Pi\to\bA_{\lam{k}}\subseteq\ATP$ from \S\ref{subsec:TPC}. Let $\TPaux\otimes\ATP$ denote the extension of scalars of $\TPaux$ to $\ATP$. This is a graded category and $\TPauxstar$ is enriched over graded right $\ATP$-modules.

Morphisms in $\TPaux\otimes\ATP$ are $\ATP$-linear combinations of Stendhal diagrams. If $D$ is a Stendhal diagram and $a\in \ATP$ then we will represent the morphism $D\cdot a$ in $\TPaux\otimes\ATP$ diagrammatically by placing $a$ in a box somewhere in $D$. The placement of the box does not change the morphism.

The following was introduced in \cite[\S3.2]{Web12} where it was presented as a category of graded projective modules over an algebra:

\begin{definition}\label{def:DeformedTPC}
    Let $\defTP$ be the idempotent completion of the category obtained from $\TPaux\otimes\ATP$ by imposing the following additional local relations as well as the reflection of \eqref{eq:DeformedRels1} in a vertical line:
    \begin{equation}\label{eq:DeformedRels1} 
        \begin{tikzpicture}[very thick,baseline=(current bounding box.center),rounded corners, scale=1.05]
            \node[scale=1.3] at (-2,-.2){$\displaystyle\sum_{p+q=\lam{k}_i}$};
            \draw (-1,.3) rectangle (.5,-.3) node[midway]{$a_k(e_{i,p})$};
%            \node at (-2.7,-.1){$(-1)^r$};
            \draw[<-] (1.1,-.5)--(1.1,.5) node[at start,below]{$i$};
%            \node at (1.2,-.7){$i$};
            \draw[fill] (1.1,0) circle[radius=2pt] node[left=3pt]{$q$};
%            \node at (1,0){$q$};
            \draw[wei] (1.7,-.5)--(1.7,.5) node[at start,below]{$(k)$};
%            \node at (1.7,-.7){$(k)$};
            \node at (2.2,0){$=$};
            \draw[<-] (2.7,-.5) .. controls (3.4,0) .. (2.7,.5) node[at start,below]{$i$};
            \draw[wei] (3.2,-.5) .. controls (2.6,0) .. (3.2,.5) node[at start,below]{$(k)$};
%            \node at (2.7,-.7){$i$};
%            \node at (3.2,-.8){$(k)$};
        \end{tikzpicture}
    \end{equation}
    \begin{equation}\label{eq:DeformedRels3}
        \begin{tikzpicture}[very thick, baseline=(current bounding box.center),rounded corners,scale=1.05]
            \draw[->] (-5,.5)--(-4,-.5) node[at end,below]{$j$};
            \draw[->] (-4,.5)--(-5,-.5) node[at end,below]{$i$};
            \draw[wei] (-4.5,-.5) .. controls (-4,0) .. (-4.5,.5) node[at start,below]{$(k)$};
%            \node at (-5,-.7){$i$};
%            \node at (-4,-.7){$j$};
%            \node at (-4.5,-.7){$(k)$};
            \node at (-3.5,0){$=$};
            %%%%%%%%%%%%%%%%%%%%%%%%%%%%%%%%%%%%%%%%
            \draw[->] (-3,.5)--(-2,-.5) node[at end,below]{$j$};
            \draw[->] (-2,.5)--(-3,-.5) node[at end,below]{$i$}; 
            \draw[wei] (-2.5,-.5) .. controls (-3,0) .. (-2.5,.5) node[at start,below]{$(k)$};
%            \node at (-3,-.7){$i$};
%            \node at (-2,-.7){$j$};
%            \node at (-2.5,-.7){$(k)$};
            \node at (-2,0){$+$};
            %%%%%%%%%%%%%%%%%%%%%%%%%%%%%%%%%%%%%%%%
            \node[scale=1.3] at (.2,-.2){$\displaystyle \delta_{i,j}\hspace{-.5em} \sum_{r=0}^{\lam{k}_i-1} \hspace{-.5em} \sum_{\substack{p+q= \\ \lam{k}_i-r-1}}\hspace{-.6em}(-1)^r $}  ;
%            \node[scale=1.1] at (-1.5,0){$\delta_{ij}$};
%            \node[scale=1.4] at (-.8,0){$\displaystyle \sum_{r=0}^{\lam{k}_i-1}$};
%            \node[scale=1.4] at (.2,-.6){$\displaystyle \quad \sum_{\substack{a+b \\ \ \ \ \ \ \ \  \ = \lam{k}_i-r-1}}$};
%            \node[scale=1.2] at (1.4,-.1){$\displaystyle(-1)^{r}$};
            \draw (2.25,.3) rectangle (3.6,-.3) node[midway]{$a_k(e_{i,r})$};
            \draw[->] (4,.5)--(4,-.5) node[at end,below]{$i$};
            \draw[fill] (4,0) circle[radius=2pt] node[left]{$p$};
            \draw[wei] (4.5,.5)--(4.5,-.5) node[at end,below]{$(k)$};
            \draw[->] (5,.5)--(5,-.5) node[at end, below]{$j$};
            \draw[fill] (5,0) circle[radius=2pt] node[right]{$q$};
        \end{tikzpicture}
    \end{equation}
\end{definition}

\noindent This a graded $\kk$-linear category and $\defTPstar$ is enriched over graded right $\ATP$-modules.

Observe that relations \eqref{eq:DeformedRels1}-\eqref{eq:DeformedRels3} reduce to the relations \eqref{eq:UndeformedTPRels1}-\eqref{eq:UndeformedTPRels2} in the undeformed category $\TP$ if we specialize $z=0$ for all $z\in Z$. So if $\kk$ denotes the unique simple graded $\ATP$-module (on which all $z\in Z$ act as zero) then tensoring morphism spaces over $\ATP$ with $\kk$ gives a functor from $\defTPstar$ to $\TPstar$. Restricting to degree zero morphisms gives a functor from $\defTP$ to $\TP$.

As with $\TP$, there is a weight decomposition $\defTP=\bigoplus\defTP(\mu)$ according to the left-most weight $\mu\in X$ of a diagram and placing diagrams in $\catQGdot$ on the left gives a graded 2-representation of $\catQGdot$ on $\defTP$.

In \S\ref{sec:Flatness} we will show that the split Grothendieck group of $\defTP$ is isomorphic to $\VZqulam$ as a $\QGdotZ$-module, which justifies our referring to $\defTP$ as a \emph{deformed tensor product categorification}.

\subsection{Bubble slides}\label{subsec:TPSymFun}

Recall the algebra $\Pi=\bigotimes_{i\in I}\Sym$ from \S\ref{subsec:SymmetricFunc}. We can interpret symmetric functions in $\defTP$ either through the isomorphism with bubbles $b_{\mu}:\Pi\to\catQGstar(1_{\mu},1_{\mu})$ from \S\ref{subsec:BubblesInU} or through the projections $a_k:\Pi\to\bA_{\lam{k}}\subseteq\ATP$. Our description of how the two of these interact takes the form of bubble slides through red strings and mirrors the relationship between the actions of $U(\mf{h}[t])$ and $\ATP$ on the tensor product $\WWulam$ of global Weyl modules.

Recall the coproduct $\delta:\Pi\to\Pi\otimes \Pi$ from \S\ref{subsec:SymmetricFunc}.

\begin{proposition}\label{prop:SymFuncTPC}
    Take $\mu\in X$ and $k\in [1,n]$. If $f\in\Pi$ and $\delta(f)=\sum_s g_s\otimes g'_s$ then
    \begin{equation}\label{eq:SymFuncTPC}
        \begin{tikzpicture}[very thick, baseline=(current bounding box.center)]
            \draw[rounded corners] (-2.25,.3) rectangle (-.3,-.3) node[midway]{$b_{\mu+\lambda^{(k)}}(f)$};
            \draw[wei] (0,.5)--(0,-.5);
            \node at (0,-.7){$(k)$};
            \node[scale=1.3] at (.5,.5){$\mu$};
        \end{tikzpicture}
        \ \ = \ 
        \ 
        \begin{tikzpicture}[very thick, baseline=(current bounding box.center)]
            \draw[wei] (-.5,.5)--(-.5,-.5);
            \node[scale=1.3] at (-3,-.2){$\displaystyle \sum_s $};
            \draw[rounded corners] (-2.1,.3) rectangle (-.65,-.3) node[midway]{$a_k(g_s')$};
            \node at (-.5,-.7){$(k)$};
            \draw[rounded corners] (0,.3) rectangle (1.05,-.3) node[midway]{$b_\mu(g_s)$};
            \node[scale=1.3] at (1.4,.5){$\mu$};
        \end{tikzpicture} 
    \end{equation}
\end{proposition}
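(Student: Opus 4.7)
The plan is to show that both sides of \eqref{eq:SymFuncTPC}, viewed as functions of $f\in\Pi$, are algebra homomorphisms from $\Pi$ into the endomorphism algebra in $\defTPstar$ of the Stendhal pair $S$ consisting of a single red $(k)$-string (region $\mu+\lam{k}$ on the left, $\mu$ on the right), and then to verify agreement on a set of algebra generators. The left-hand side is an algebra map because $b_{\mu+\lam{k}}$ is one by \S\ref{subsec:BubblesInU}. For the right-hand side, denote it by $\Phi(f)$; if $\delta(f_1)=\sum_s g_s\otimes g_s'$ and $\delta(f_2)=\sum_t h_t\otimes h_t'$, then vertical composition yields $\Phi(f_1)\cdot\Phi(f_2)=\sum_{s,t}a_k(g_s'h_t')\cdot 1_S\cdot b_\mu(g_sh_t)$, using that the two stacked red-string identities compose to $1_S$, that $a_k$ and $b_\mu$ are algebra homomorphisms, and that scalars from $\ATP$ commute with every 2-morphism by the $\ATP$-module enrichment of $\defTPstar$. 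Multiplicativity of $\delta$ identifies this with $\Phi(f_1f_2)$. Together with the trivial case $\Phi(1)=1_S$, this reduces the proposition to a check on a generating set of $\Pi$.

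For this check I would use the power-sum generators, for which $\delta(p_{j,r})=p_{j,r}\otimes 1+1\otimes p_{j,r}$ collapses \eqref{eq:SymFuncTPC} to the additive ``red bubble slide''
\[ b_{\mu+\lam{k}}(p_j(x))\cdot 1_S \;-\; 1_S\cdot b_\mu(p_j(x)) \;=\; a_k(p_j(x))\cdot 1_S, \]
viewed as formal power series in $x$, where by \eqref{eq:GenFuncVariables} one has $a_k(p_j(x))=\sum_{z\in Z^{(k)}_j} zx/(1-zx)$. This is the exact tensor-product analogue of the third slide in Lemma~\ref{Lem:SlidesCatQG}, with the simple root $\alpha_i$ replaced by $\lam{k}$ and the single dot variable $y$ replaced by a sum over the spectrum $Z^{(k)}_j$. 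To verify it, I would expand the counterclockwise $j$-bubble representing $b_\mu(p_j(x))$ via the curl relation \eqref{eq:CurlRel} into a sum involving a dotted downward $j$-strand, pull that strand across the red $(k)$-string using the defining relation \eqref{eq:DeformedRels1} (and its horizontal reflection), and reassemble a bubble on the opposite side using \eqref{eq:CurlRel} again; the corrections collected from \eqref{eq:DeformedRels1} should sum, via the generating-function formula \eqref{eq:GenFuncVariables}, to exactly $a_k(p_j(x))\cdot 1_S$.

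The main obstacle is the diagrammatic bookkeeping in this last step: tracking how iterated applications of \eqref{eq:DeformedRels1} accumulate into the generating series $a_k(p_j(x))$, and matching this against the expansion of the bubble obtained from \eqref{eq:CurlRel}. The generating-function formulation and the parallel with Lemma~\ref{Lem:SlidesCatQG} are what keep the combinatorics manageable. As a cross-check, one could instead verify the multiplicative form of the identity on the elementary generators $e_{j,r}$ (where $\delta(e_j(x))=e_j(x)\otimes e_j(x)$ makes direct contact with the elementary-symmetric form of \eqref{eq:DeformedRels1}), and recover the $p_j$ case by a logarithmic-derivative argument; either route yields the proposition.
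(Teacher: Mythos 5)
Your algebra-homomorphism reduction is correct and is implicitly what the paper does: since $b_{\mu+\lam{k}}$, $b_\mu$, $a_k$, and $\delta$ are all algebra maps and $\ATP$-scalars commute with 2-morphisms, both sides of \eqref{eq:SymFuncTPC} are algebra homomorphisms $\Pi\to\defTPstar(S,S)$, so checking on a generating set of each $\Sym$-factor suffices. The genuine gap is in the verification on generators. You propose to use power sums and manipulate $b_\mu(p_j(x))$ directly via \eqref{eq:CurlRel} and \eqref{eq:DeformedRels1}, but the deformed relation \eqref{eq:DeformedRels1} is a relation among \emph{real} diagrams: it gives no way to move a fake bubble across the red string. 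Which bubbles with $s$ dots are real depends on $\mu$ -- a clockwise one is real only when $s>\lam{k}_i+\langle i,\mu\rangle$, a counterclockwise one only when $s>-\langle i,\mu\rangle$ -- and expanding $b_\mu(p_{j,s})$ in terms of elementary or complete-homogeneous bubbles will generically involve fake ones. Your invocation of \eqref{eq:CurlRel} does not sidestep this, since the bubble terms appearing there may themselves be fake. This is exactly the ``fiddly'' point the paper singles out. The paper's route is Lemma~\ref{lem:BubblesSymFun}: three red-string slide identities (for clockwise bubbles, for counterclockwise bubbles, and a product identity in the intermediate range), each proved under an explicit realness hypothesis, the third by reduction to ``infinity diagrams'' and the infinite Grassmannian relation. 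The proof of the proposition then chooses a \emph{different} generating set of $\Sym$ depending on the sign of $\langle i,\mu\rangle$: the $e_{i,s}$ when $\langle i,\mu\rangle\leq-\lam{k}_i$, the $h_{i,s}$ when $\langle i,\mu\rangle\geq 0$, and a hybrid of $h_{i,s}$ (small $s$) and $e_{i,s}$ (large $s$) when $-\lam{k}_i<\langle i,\mu\rangle<0$. Your closing suggestion of verifying on $e_{j,r}$ and recovering $p_j$ by a logarithmic-derivative trick is closer to the paper's actual method, but it also works only under the realness condition; a weight-dependent case analysis cannot be avoided, and your sketch omits it.
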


Take $i\in I$ and recall the generating functions $e_i(x)$, $h_i(x)$, and $p_i(x)$ from \S\ref{subsec:SymmetricFunc} and the set $Z_i^{(k)}$ of indeterminates from \S\ref{subsec:TPWeylMods}. In Lemma~\ref{Lem:SlidesCatQG} we stated bubble slides in $\catQGdotstar$ for $e_i(x)$, $h_i(x)$, and $p_i(x)$. The proposition yields the analogous equations in $\defTPstar$ (the products and sums below are taken over all $z\in Z_i^{(k)}$):
\begin{equation}\label{eq:SlidesTPGenFunc1}
    \begin{tikzpicture}[very thick, baseline=(current bounding box.center)]
            \draw[rounded corners] (-2.35,.3) rectangle (-.2,-.3) node[midway]{$b_{\mu+\lambda^{(k)}}(e_i(x))$};
            \draw[wei] (0,.65)--(0,-.6) node[at end,below]{$(k)$};
            \node[scale=1.3] at (.5,.5){$\mu$};
        \end{tikzpicture}
        \ \ = \ 
        \ 
        \begin{tikzpicture}[very thick, baseline=(current bounding box.center)]
            \draw[wei] (-.5,.65)--(-.5,-.6) node[at end,below]{$(k)$};
            \node[scale=1.4] at (-3,-.3){$ $};
            \draw[rounded corners] (-3,.45) rectangle (-.65,-.45) node[midway]{$\displaystyle \prod (1+zx)$};
            \draw[rounded corners] (-.25,.35) rectangle (1.2,-.35) node[midway]{$b_\mu(e_i(x))$};
            \node[scale=1.3] at (1.4,.5){$\mu$};
        \end{tikzpicture} 
\end{equation}

\begin{equation}\label{eq:SlidesTPGenFunc2}
    \begin{tikzpicture}[very thick, baseline=(current bounding box.center)]
            \draw[rounded corners] (-2.45,.3) rectangle (-.2,-.3) node[midway]{$b_{\mu+\lambda^{(k)}}(h_i(x))$};
            \draw[wei] (0,.8)--(0,-.6) node[at end,below]{$(k)$};
            \node[scale=1.3] at (.5,.5){$\mu$};
        \end{tikzpicture}
        \ \ = \ 
        \ 
        \begin{tikzpicture}[very thick, baseline=(current bounding box.center)]
            \draw[wei] (-.5,.8)--(-.5,-.6) node[at end,below]{$(k)$};
            \node[scale=1.4] at (-3,-.3){};
            \draw[rounded corners] (-2.9,.55) rectangle (-.7,-.35) node[midway]{$\displaystyle\prod(1-zx)^{-1}$};
            \draw[rounded corners] (-.25,.45) rectangle (1.5,-.25) node[midway]{$b_\mu(h_i(x))$};
            \node[scale=1.3] at (1.7,.5){$\mu$};
        \end{tikzpicture} 
\end{equation}

\begin{equation}\label{eq:SlidesTPGenFunc3}
    \begin{tikzpicture}[very thick, baseline=(current bounding box.center)]
            \draw[rounded corners] (-2.45,.3) rectangle (-.2,-.3) node[midway]{$b_{\mu+\lambda^{(k)}}(p_i(x))$};
            \draw[wei] (0,.8)--(0,-.6) node[at end,below]{$(k)$};
            \node[scale=1.3] at (.5,.5){$\mu$};
        \end{tikzpicture}
        \ \ = \ 
        \ 
        \begin{tikzpicture}[very thick, baseline=(current bounding box.center)]
            \draw[wei] (-.5,.8)--(-.5,-.6) node[at end, below]{$(k)$};
            \draw[rounded corners] (-.3,.3) rectangle (1.25,-.3) node[midway]{$b_\mu(p_i(x))$};
            \node[scale=1.3] at (1.4,.5){$\mu$};
        \end{tikzpicture} 
        \ + \ 
        \begin{tikzpicture}[very thick, baseline=(current bounding box.center)]
            \draw[wei] (-.2,.8)--(-.2,-.6) node[at end,below]{$(k)$};
            \node[scale=1.4] at (-3,-.3){};
            \draw[rounded corners] (-2.8,.6) rectangle (-.65,-.45) node[midway]{$\displaystyle\sum \frac{zx}{1-zx}$};
            \node[scale=1.3] at (0.2,.5){$\mu$};
        \end{tikzpicture} 
\end{equation}

The idea behind the proof of the proposition is simple, but the reality is a little fiddly. Through explicit diagrammatic calculations based on the deformed relations in Definition~\ref{def:DeformedTPC}, we will show that \eqref{eq:SymFuncTPC} holds for all $f$ in a generating set for $\Pi$. The difficulty is that a priori the deformed relations only hold when the bubbles are \emph{real}, so the generating set we choose depends on the weight $\mu$.

\begin{lemma}\label{lem:BubblesSymFun}
    Take $\mu\in X$, $i\in I$, $k\in [1,n]$, and $s\geq 1$. 
    \begin{enumerate}[label=(\roman*)]
        \item\label{item:BubblesSymFun1} If $s>\lam{k}_i+\langle i,\mu\rangle$ then
       \begin{equation}\label{eq:BubblesSymFun1}
       \begin{tikzpicture}[very thick, baseline=(current bounding box.center)]
            \draw[postaction={decorate,decoration={markings,
            mark=at position .5 with {\arrow[scale=1]{>}}}},very thick] (0,0) circle (10pt);
            \node[scale=.67] at (-.5,.35){$i$};
            \draw[fill] (.23,.25) circle[radius=1.5pt];
            \node[scale=.8] at (.73,.45){$\spadesuit+s$};
            \draw[wei] (1.7,.5)--(1.7,-.5) node[at end, below]{$(k)$};
%            \node at (1.7,-.7){$(k)$};
            \node[scale=1.5] at (2.2,.5){$\mu$};
            \node at (2.7,0){$=$};
            \node[scale=1.4] at (3.2,0){$\displaystyle \sum_{r=0}^s$};
            \node[scale=1] at (3.9,0){$(-1)^r$};
            \draw[rounded corners] (4.5,.3) rectangle (5.95,-.3) node[midway]{$a_k(e_{i,r})$};
            \draw[wei] (6.2,-.5)--(6.2,.5)node[at start, below]{$(k)$};;
%            \node at (6.2,-.7){$(k)$};
            \draw[postaction={decorate,decoration={markings,
            mark=at position .5 with {\arrow[scale=1]{>}}}},very thick] (7.2,0) circle (10pt);
            \node[scale=.67] at (6.7,.35){$i$};
            \draw[fill] (7.43,.25) circle[radius=2pt];
            \node[scale=.8] at (7.93,.5){$\spadesuit+s-r$};
            \node[scale=1.5] at (8.3,-.1){$\mu$};
        \end{tikzpicture}
       \end{equation}
        \item\label{item:BubblesSymFun2} If $s>-\langle i,\mu\rangle$ then
        \begin{equation}\label{eq:BubblesSymFun2}
            \begin{tikzpicture}[very thick, baseline=(current bounding box.center)]
            \draw[postaction={decorate,decoration={markings,
            mark=at position .5 with {\arrow[scale=1]{<}}}},very thick] (1,0) circle (10pt);
            \node[scale=.67] at (.5,.35){$i$};
            \draw[fill] (1.23,.25) circle[radius=1.5pt];
            \node[scale=.8] at (1.73,.45){$\spadesuit+s$};
            \draw[wei] (.25,.5)--(.25,-.5) node[at end, below]{$(k)$};
%            \node at (.25,-.7){$(k)$};
            \node[scale=1.5] at (2.2,-.2){$\mu$};
            \node at (2.7,0){$=$};
            \node[scale=1.4] at (3.2,0){$\displaystyle \sum_{r=0}^s$};
            \node[scale=1] at (3.9,0){$(-1)^r$};
            \draw[rounded corners] (4.5,.3) rectangle (5.95,-.3) node[midway]{$a_k(e_{i,r})$};
            \draw[wei] (8.2,-.5)--(8.2,.5) node[at start, below]{$(k)$};;
  %          \node at (8.2,-.7){$(k)$};
            \draw[postaction={decorate,decoration={markings,
            mark=at position .5 with {\arrow[scale=1]{<}}}},very thick] (6.5,0) circle (10pt);
            \node[scale=.67] at (6.2,.35){$i$};
            \draw[fill] (6.75,.25) circle[radius=1.5pt];
            \node[scale=.8] at (7.43,.5){$\spadesuit+s-r$};
            \node[scale=1.5] at (8.8,.3){$\mu$};
        \end{tikzpicture}
        \end{equation}
        
        \item\label{item:BubblesSymFun3} If $-\lam{k}_i<\langle i,\mu\rangle<0$ and $s<\lam{k}_i$ then
        \begin{equation}\label{eq:BubblesSymFun3}
            \begin{tikzpicture}[very thick, baseline=(current bounding box.center)]
                    \node[scale=1.4] at (-2.5,0){$\displaystyle \sum_{r=0}^s$};
                    \node at (-1.8,0){$(-1)^r$};
                    \draw[rounded corners] (-1.3,.3) rectangle (.2,-.3) node[midway]{$a_k(e_{i,r})$};
                    \node[scale=1.4] at (.7,-.18){$\displaystyle \sum_{p+q=s-r}$};
                    %%%%%%%%%%%%%%%%%%%%%%%%%%%%%%%%%%%%%%%
                    \draw[postaction={decorate,decoration={markings,
                        mark=at position .5 with {\arrow[scale=1]{<}}}},very thick] (2,0) circle (10pt);
                    \node[scale=.67] at (1.5,.35){$i$};
                    \draw[fill] (2.23,.25) circle[radius=1.5pt];
                    \node[scale=.8] at (2.73,.35){$\spadesuit+p$};
                    \draw[wei] (3.43,.5)--(3.43,-.5)node[at end, below]{$(k)$};;
%                    \node at (3.43,-.7){$(k)$};
                    \draw[postaction={decorate,decoration={markings,
                        mark=at position .5 with {\arrow[scale=1]{>}}}},very thick] (4.43,0) circle (10pt);
                    \node[scale=.67] at (3.93,.35){$i$};
                    \draw[fill] (4.68,.25) circle[radius=1.5pt];
                    \node[scale=.8] at (5.13,.35){$\spadesuit+q$};
                    \node[scale=1.4] at (5.53,-.6){$\mu$};
                    %%%%%%%%%%%%%%%%%%%%%%%%%%%%%%%%%%%%%%%%
                    \node at (5.93,0){$=$};
                    \node[scale=1.4] at (6.3,0){$0$};
            \end{tikzpicture}
        \end{equation}
        
        %\includegraphics[scale=0.15]{Diagrams/BubblesSymFun3.jpg}
        %\noindent \red{Should say `$=0$'at the end}
    \end{enumerate}
\end{lemma}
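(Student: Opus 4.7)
\textbf{Proof plan for Lemma~\ref{lem:BubblesSymFun}.} The three parts are diagrammatic bubble-slide identities in $\defTPstar$, and my plan is to treat them as successive consequences of the deformed red-strand relation \eqref{eq:DeformedRels1}, the curl relation \eqref{eq:CurlRel}, and the infinite Grassmannian identity \eqref{eq:InfGrass}. The common mechanism is: express a bubble as a curl on a vertical $i$-strand via \eqref{eq:CurlRel}; slide the dotted strand through the red $(k)$-strand using \eqref{eq:DeformedRels1}, at which point the factors $(-1)^r a_k(e_{i,r})$ appear on the left; and then reassemble the curl on the right using \eqref{eq:CurlRel} in the opposite direction.

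For part \ref{item:BubblesSymFun1}, the hypothesis $s>\lam{k}_i+\langle i,\mu\rangle$ means that, in the weight $\mu+\lam{k}$ on the left the clockwise bubble $\spadesuit+s$ has non-negative dot count (so it is a genuine bubble, not a fake one), and, for every $0\le r\le \lam{k}_i$ the clockwise bubble $\spadesuit+s-r$ in weight $\mu$ on the right also has non-negative dot count. Thus no fake bubbles enter the argument. The plan is to rewrite the LHS clockwise bubble using \eqref{eq:CurlRel} as a dotted upward $i$-strand that forms a curl, then use \eqref{eq:DeformedRels1} to pull the dots across the red strand (this produces precisely the $\sum_r(-1)^r a_k(e_{i,r})\cdot\,(\text{residual dots})$ structure), and finally recombine on the right using \eqref{eq:CurlRel} again. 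Part \ref{item:BubblesSymFun2} follows by exactly the same strategy, with the curl relation applied to a downward-oriented strand; the hypothesis $s>-\langle i,\mu\rangle$ guarantees that the LHS counterclockwise bubble is real, while any fake bubbles appearing on the right among $\spadesuit+s-r$ are dealt with uniformly by expanding them via \eqref{eq:InfGrass}, after which the identity holds as an equality of formal expressions in both real and fake bubbles.

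For part \ref{item:BubblesSymFun3}, the situation is more subtle because the counterclockwise bubble sits in weight $\mu+\lam{k}$ while the clockwise bubble sits in weight $\mu$, so \eqref{eq:InfGrass} cannot be applied directly. My plan is to slide the counterclockwise bubble across the red strand so that both bubbles live in the same weight region, and then invoke \eqref{eq:InfGrass}. Concretely, I would pass to generating functions: the inner sum $\sum_{p+q=s-r}$ is the $x^{s-r}$-coefficient of the product of a counterclockwise $h_i(x)$-bubble in weight $\mu+\lam{k}$ and a clockwise $e_i(-x)$-bubble in weight $\mu$, and after sliding the counterclockwise factor through the red strand using part \ref{item:BubblesSymFun2} (or its generating-function counterpart), the total becomes the $x^s$-coefficient of an $a_k$-multiple of $h_i(x)e_i(-x)=1$, weighted by $\sum_r(-1)^r a_k(e_{i,r})x^r$. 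Because $s<\lam{k}_i$, the relevant coefficient is a proper $x$-power of $1$, which is zero.

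The main obstacle I anticipate is \ref{item:BubblesSymFun3}: the bubbles on either side of the red strand sit in different weight spaces, so the infinite Grassmannian identity cannot be invoked in place, and at the same time the weight region $-\lam{k}_i<\langle i,\mu\rangle<0$ with $s<\lam{k}_i$ forces one to work with a mix of real and fake bubbles on both sides. A careful induction on $s$ using \ref{item:BubblesSymFun1} and \ref{item:BubblesSymFun2} as base cases for individual slides, combined with \eqref{eq:InfGrass}, will likely be needed in order to justify that the fake-bubble interpretations match across the red strand and that the final cancellation is valid.
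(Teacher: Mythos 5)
Your plan for parts~\ref{item:BubblesSymFun1} and \ref{item:BubblesSymFun2} has the right spirit (slide the bubble through the red $(k)$-strand and collect the $(-1)^r a_k(e_{i,r})$ factors), but the mechanism you propose does not work as stated: the curl relation \eqref{eq:CurlRel} expresses a \emph{curl} (an open strand that turns back on itself) as a sum of dotted vertical strands multiplied by bubbles --- it does not express a standalone bubble as a curl, so there is no way to invoke \eqref{eq:CurlRel} to ``open'' a bubble. The paper instead pulls the bubble through the red strand directly: push the right arc of the bubble across the red strand (using \eqref{eq:UpAndRed} or its mirror), then use the mirror of the deformed relation \eqref{eq:DeformedRels1} to pass the left arc, and observe that the extra terms from extending the summation beyond $\lambda^{(k)}_i$ either have negative-degree bubbles (hence are zero) or involve $a_k(e_{i,r})$ with $r > \lambda^{(k)}_i$ (hence vanish in $\bA_{\lambda^{(k)}}$). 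The hypothesis on $s$ is used precisely to guarantee that the bubble being manipulated is real so that the local deformed relations apply.

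Part~\ref{item:BubblesSymFun3} is where the gap is most serious. You propose to slide the counterclockwise $h_i$-bubble across the red strand using part~\ref{item:BubblesSymFun2} (or the generating-function identity \eqref{eq:SlidesTPGenFunc2}) and then apply the infinite Grassmannian relation \eqref{eq:InfGrass} once both bubbles live in weight $\mu$. But \eqref{eq:SlidesTPGenFunc2} is a consequence of Proposition~\ref{prop:SymFuncTPC}, which in turn is proved \emph{from} Lemma~\ref{lem:BubblesSymFun} --- so invoking it here is circular. Using only part~\ref{item:BubblesSymFun2} is also not enough: its hypothesis is $s > -\langle i,\mu\rangle$, and in the regime $-\lambda^{(k)}_i < \langle i,\mu\rangle < 0$ of part~\ref{item:BubblesSymFun3} the $\spadesuit + p$ counterclockwise bubbles with small $p$ (which are real in weight $\mu + \lambda^{(k)}$) cannot be slid by the $s$-range covered by \ref{item:BubblesSymFun2}, and inverting the triangular system from \ref{item:BubblesSymFun2} to cover small indices requires knowing the cases that \ref{item:BubblesSymFun2} does not prove. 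The paper's actual argument is entirely different: it sets $s_1 := \max\{0, s + \langle i,\mu\rangle\}$ and $s_2 := \max\{s-1, -\langle i,\mu\rangle\}$ (so $s_1 + s_2 = s - 1$ and both $s_1, s_2$ produce real bubbles on both sides of the red strand), rewrites the left-hand side of \eqref{eq:BubblesSymFun3} as the auxiliary expression in \eqref{eq:BubblesSymFun4}, applies the deformed relation \eqref{eq:DeformedRels3} to a ``pinched'' figure-eight to obtain a difference of two infinity diagrams with the red strand on opposite sides, and kills each of those by closing \eqref{eq:CurlRel} into a product of bubbles that is a positive-degree coefficient of the infinite Grassmannian identity \eqref{eq:InfGrass}. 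You would need to reproduce this reduction (or something equivalent that avoids appealing forward to \eqref{eq:SlidesTPGenFunc2}) before the generating-function bookkeeping you describe can be made rigorous.
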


\begin{proof}
    \ref{item:BubblesSymFun1} By the assumption on $s$, the bubble on the left is real, so we can pull the right edge through the red string and use the mirror image of deformed relation \eqref{eq:DeformedRels1} in a vertical line to pull the rest of the bubble through. This yields the right hand side of the equation, but with the sum running to $\lam{k}_i$. The difference between these two sums either involves negative degree bubbles, which are zero, or symmetric polynomials $a_k(e_{i,r})$ with $r>\lam{k}_i$, which are also zero. So the equation holds.
    \newline
        
    \noindent \ref{item:BubblesSymFun2} This is similar to \ref{item:BubblesSymFun1}, but we pull the bubble left through the red string.
    \newline
        
    \noindent \ref{item:BubblesSymFun3} This is more involved. Set $s_1:=\max\{0,s+\langle i,\mu\rangle\}$ and $s_2:=\max\{s-1,-\langle i,\mu\rangle\}$. Observe that $s_1+s_2=s-1$ and
    \begin{equation}\label{eq:BubbleProofIneq}
        0\leq s_1< \lam{k}_i+\langle i,\mu\rangle,\qquad 0\leq s_2< -\langle i,\mu\rangle.
    \end{equation}
    
    \noindent Consider the following diagram:
    \begin{equation}\label{eq:BubblesSymFun4}
        \begin{tikzpicture}[very thick, baseline=(current bounding box.center)]
                    \node[scale=1.4] at (-2.5,0){$\displaystyle \sum_{r=0}^{\lam{k}_i-1}$};
                    \node at (-1.8,0){$(-1)^r$};
                    \draw[rounded corners] (-1.3,.3) rectangle (.2,-.3) node[midway]{$a_k(e_{i,r})$};
                    \node[scale=1.4] at (.7,0){$\displaystyle \sum$};
%                    \node[scale=.9] at (.7,-.5){$a+b=$};
%                    \node[scale=.9] at (.7,-.8){$\langle i, \lam{k} \rangle -r-1;$};
%                    \node[scale=.9] at (.7,-1.2){$a,b\geq0$};
                    %%%%%%%%%%%%%%%%%%%%%%%%%%%%%%%%%%%%%%%
                    \draw[postaction={decorate,decoration={markings,
                        mark=at position .5 with {\arrow[scale=1]{<}}}},very thick] (2,0) circle (10pt);
                    \node[scale=.67] at (1.5,.35){$i$};
                    \draw[fill] (2.23,.25) circle[radius=1.5pt];
                    \node[scale=.8] at (2.73,.35){$s_1+a$};
                    \draw[wei] (3.43,.5)--(3.43,-.5) node[at end, below]{$(k)$};;
  %                  \node at (3.43,-.7){$(k)$};
                    \draw[postaction={decorate,decoration={markings,
                        mark=at position .5 with {\arrow[scale=1]{>}}}},very thick] (4.43,0) circle (10pt);
                    \node[scale=.67] at (3.93,.35){$i$};
                    \draw[fill] (4.63,.25) circle[radius=1.5pt];
                    \node[scale=.8] at (5.13,.35){$s_2+b$};
                    %%%%%%%%%%%%%%%%%%%%%%%%%%%%%%%%%%%%%%%%
                    \node[scale=1.4] at (5.63,-.2){$\mu$};
            \end{tikzpicture}
    \end{equation}
    
    \noindent where the second sum is over all $a,b\geq 0$ with $a+b=\lam{k}_i-r-1$. We can rewrite this in ``spade notation'' as
    \begin{equation}
        \begin{tikzpicture}[very thick, baseline=(current bounding box.center)]
                    \node[scale=1.4] at (-3,0){$\displaystyle \sum_{r=0}^{\lam{k}_i -1}$};
                    \node at (-2.2,0){$(-1)^r$};
                    \draw[rounded corners] (-1.7,.3) rectangle (-.2,-.3) node[midway]{$a_k(e_{i,r})$};
                    \node[scale=1.4] at (.7,0){$\displaystyle \sum$};
                    \node[scale=.9] at (.7,-.5){$p+q=s-r$};
%                    \node[scale=.9] at (.7,-.9){$p\geq s_1 - \langle i, \mu+\lam{k} \rangle +1;$};
%                    \node[scale=.9] at (.7,-1.3){$q\geq s_2 - \langle i, \mu \rangle +1$};
                    %%%%%%%%%%%%%%%%%%%%%%%%%%%%%%%%%%%%%%%
                    \draw[postaction={decorate,decoration={markings,
                        mark=at position .5 with {\arrow[scale=1]{<}}}},very thick] (2,0) circle (10pt);
                    \node[scale=.67] at (1.5,.35){$i$};
                    \draw[fill] (2.23,.25) circle[radius=1.5pt];
                    \node[scale=.8] at (2.73,.35){$\spadesuit+p$};
                    \draw[wei] (3.43,.5)--(3.43,-.5)node[at end, below]{$(k)$};;
%                    \node at (3.43,-.7){$(k)$};
                    \draw[postaction={decorate,decoration={markings,
                        mark=at position .5 with {\arrow[scale=1]{>}}}},very thick] (4.43,0) circle (10pt);
                    \node[scale=.67] at (3.93,.35){$i$};
                    \draw[fill] (4.63,.25) circle[radius=1.5pt];
                    \node[scale=.8] at (5.13,.35){$\spadesuit+q$};
                    %%%%%%%%%%%%%%%%%%%%%%%%%%%%%%%%%%%%%%%%
                    \node[scale=1.4] at (5.63,-.2){$\mu$};
            \end{tikzpicture}
    \end{equation}
    
    \noindent where $p$ and $q$ in the second sum must satisfy
    \begin{equation}
        p\geq s_1 - \lam{k}_i -\langle i, \mu \rangle +1, \qquad   q\geq s_2 - \langle i, \mu \rangle +1.
    \end{equation}
    
    %But $s_1-\langle i,\mu+\lam{k}\rangle +1\leq 0$ and $s_2+\langle i,\mu\rangle+1\leq 0$ 
    \noindent By \eqref{eq:BubbleProofIneq} the expressions on the right-hand side of both inequalities are less than or equal to zero and bubbles of negative degree are zero, so we can change the second summation to be over $p,q\geq 0$ without changing the value of the expression. Now the first sum is empty for $s<r<\lam{k}_i$ so we change the upper limit to $s$, which yields the left hand side \eqref{eq:BubblesSymFun3}.
    
    Now we claim that \eqref{eq:BubblesSymFun4} is equal to zero. Since $s_1,s_2\geq 0$, all the bubbles in the diagram are real, so we can apply the deformed relation \eqref{eq:DeformedRels3} to get
    \begin{equation*}
        \begin{tikzpicture}[baseline=(current bounding box.center),rotate=90,very thick]%%rotated box for infinity
         \draw[<-] (-0.6,0) .. controls ++(0,.4) and ++(0,-.5) .. (-0.0,1);
          \draw[->] (0.0,0) .. controls ++(0,.4) and ++(0,-.5) .. (-0.6,1);
           \draw[] (-0.6,1) .. controls ++(0,.4) and ++(0, .4) .. (-0.0,1);
          \draw[] (0.0,0) .. controls ++(0,-.5) and ++(0,-.5) .. (-0.6,0);
          \filldraw  (-0.05,1.2) circle (1.5pt);
          \filldraw   (-0.1,-.3) circle (1.5pt);
        \node at (0.15,1.2) { $s_1$};
        \node at (-.2,1.5){$i$};
          \node at (0.1,-.4) { $s_2$};
          \draw[wei] (-.8,.5).. controls (-.35,.1) .. (.1,.5) node[at start, below]{$(k)$};
%          \node at (-1,.5){$(k)$};
          \node[scale=1.4] at (.2,-.9){$\mu$};
          \node at (-.35,-1.4){$-$};
        \end{tikzpicture}
        \begin{tikzpicture}[baseline=(current bounding box.center),rotate=90,very thick]%%rotated box for infinity
             \draw[<-] (-0.6,0) .. controls ++(0,.4) and ++(0,-.5) .. (-0.0,1);
              \draw[->] (0.0,0) .. controls ++(0,.4) and ++(0,-.5) .. (-0.6,1);
               \draw[] (-0.6,1) .. controls ++(0,.4) and ++(0, .4) .. (-0.0,1);
              \draw[] (0.0,0) .. controls ++(0,-.5) and ++(0,-.5) .. (-0.6,0);
              \filldraw  (-0.05,1.2) circle (1.5pt);
              \filldraw   (-0.1,-.3) circle (1.5pt);
            \node at (0.15,1.2) { $s_1$};
            \node at (-.2,1.5){$i$};
              \node at (0.1,-.4) { $s_2$};
              \draw[wei] (-.8,.5).. controls (-.35,.9) .. (.1,.5) node[at start, below]{$(k)$};;
%              \node at (-1,.5){$(k)$};
              \node[scale=1.4] at (.2,-.9){$\mu$};
              \node at (-.35,-1.4){$=$};
        \end{tikzpicture}
        \begin{tikzpicture}[baseline=(current bounding box.center),rotate=90,very thick]%%rotated box for infinity
             \draw[<-] (-0.6,0) .. controls ++(0,.4) and ++(0,-.5) .. (-0.0,1);
              \draw[->] (0.0,0) .. controls ++(0,.4) and ++(0,-.5) .. (-0.6,1);
               \draw[] (-0.6,1) .. controls ++(0,.4) and ++(0, .4) .. (-0.0,1);
              \draw[] (0.0,0) .. controls ++(0,-.5) and ++(0,-.5) .. (-0.6,0);
              \filldraw  (-0.05,1.2) circle (1.5pt);
              \filldraw   (-0.1,-.3) circle (1.5pt);
            \node at (0.15,1.2) { $s_1$};
            \node at (-.2,1.5){$i$};
              \node at (0.1,-.4) { $s_2$};
              \draw[wei] (-.8,-.8)--(.1,-.8) node[at start, below]{$(k)$};
%              \node at (-1,-.8){$(k)$};
              \node[scale=1.4] at (.1,-1.2){$\mu$};
              %\node[scale=1] at (.1,.45){$\mu+\lam{k}$};
              \node at (-.35,-1.4){$-$};
        \end{tikzpicture}
        \begin{tikzpicture}[baseline=(current bounding box.center),rotate=90,very thick]%%rotated box for infinity
             \draw[<-] (-0.6,0) .. controls ++(0,.4) and ++(0,-.5) .. (-0.0,1);
              \draw[->] (0.0,0) .. controls ++(0,.4) and ++(0,-.5) .. (-0.6,1);
               \draw[] (-0.6,1) .. controls ++(0,.4) and ++(0, .4) .. (-0.0,1);
              \draw[] (0.0,0) .. controls ++(0,-.5) and ++(0,-.5) .. (-0.6,0);
              \filldraw  (-0.05,1.2) circle (1.5pt);
              \filldraw   (-0.1,-.3) circle (1.5pt);
            \node at (0.15,1.2) { $s_1$};
            \node at (-.2,-.5){$i$};
              \node at (0.1,-.4) { $s_2$};
              \draw[wei] (-.8,1.5)--(.1,1.5) node[at start, below]{$(k)$};
 %             \node at (-1,1.5){$(k)$};
              \node[scale=1.4] at (-1,-.2){$\mu$};
              %\node[scale=1] at (.1,.45){$\mu+\lam{k}$};
              %\node at (-.35,-1.4){$-$};
        \end{tikzpicture}
    \end{equation*}
    
    \noindent We claim that both of these ``infinity'' diagrams are zero.
    
    For the rightmost one, consider \eqref{eq:CurlRel} with $m=s_2$ and $\lambda=\mu$. Adding $s_1$ dots to the free string and closing it with a loop on the left yields
    \begin{equation}
        \begin{tikzpicture}[baseline=(current bounding box.center),rotate=90,very thick]%%rotated box for infinity
        \raisebox{5mm}{
         \draw[<-] (-0.6,0) .. controls ++(0,.4) and ++(0,-.5) .. (-0.0,1);
          \draw[->] (0.0,0) .. controls ++(0,.4) and ++(0,-.5) .. (-0.6,1);
           \draw[] (-0.6,1) .. controls ++(0,.4) and ++(0, .4) .. (-0.0,1);
          \draw[] (0.0,0) .. controls ++(0,-.5) and ++(0,-.5) .. (-0.6,0);
          \filldraw  (-0.05,1.2) circle (1.5pt);
          \filldraw   (-0.1,-.3) circle (1.5pt);
        \node at (0.15,1.2) { $s_1$};
        \node at (-.2,1.5){$i$};
          \node at (0.1,-.4) { $s_2$};}
          \node[scale=1.4] at (.8,-.9){$\mu$};
        \end{tikzpicture}
      %  \hspace{.2in} 
      \
        \begin{tikzpicture}[very thick, baseline=(current bounding box.center)]%%%normal box for rest of equation
        \node at (1,0){$=$};
        \node[scale=1.4] at (1.5,0){$-$};
        \node[scale=1.4] at (2,0){$\displaystyle \sum$};
        \node[scale=.9] at (2,-.6){$\alpha + \beta = s;$};
%        \node[scale=.9] at (2,-1){$\alpha \geq 0;$};
%        \node[scale=.9] at (2,-1.4){$\beta \geq s_2 + \langle i,\mu \rangle + 1$};
        \draw[postaction={decorate,decoration={markings,
            mark=at position .5 with {\arrow[scale=1]{<}}}},very thick] (3.3,0) circle (10pt);
        \node[scale=.67] at (2.8,.35){$i$};
        \draw[fill] (3.53,.25) circle[radius=1.5pt];
        \node[scale=.8] at (4.03,.35){$\spadesuit+\alpha$};
        \draw[postaction={decorate,decoration={markings,
            mark=at position .5 with {\arrow[scale=1]{>}}}},very thick] (5.3,0) circle (10pt);
        \node[scale=.67] at (4.8,.35){$i$};
        \draw[fill] (5.53,.25) circle[radius=1.5pt];
        \node[scale=.8] at (6.03,.35){$\spadesuit+\beta$};
        \node[scale=1.4] at (6.83,.5){$\mu$};
        \end{tikzpicture}
    \end{equation}
    
    \noindent with the sum over $\alpha\geq 0$ and $\beta\geq s_2+\langle i,\mu\rangle+1$. Again, since $s_2+\langle i,\mu\rangle +1\leq 0$ and bubbles of negative degree are zero, we can just take $\beta\geq 0$. Now this is the homogeneous component of the infinite Grassmannian \eqref{eq:InfGrass} of degree $s>0$ so is zero.
    
    The calculation for the other ``infinity" diagram is similar using the second identity in \cite[Proposition~3.3]{BHLW16}. The claim follows.
\end{proof}

\begin{proof}[Proof of Proposition~\ref{prop:SymFuncTPC}]
    We fix $i\in I$ and show the claim for the copy of $\Sym$ in $\Pi$ indexed by $i$. We will prove \eqref{eq:SymFuncTPC} for all $f$ in a generating set of $\Sym$. Since we have only proved the bubble slides in Lemma~\ref{lem:BubblesSymFun} for real bubbles, the generating set we choose depends on $\lam{k}_i$. There are three cases.
    
    \begin{enumerate}[leftmargin=*]
        \item\label{item:BubbleSlidesCase1} If $\langle i,\mu\rangle\leq -\lam{k}_i$ then \eqref{eq:BubblesSymFun1} holds for all $s>0$. Since
        \begin{equation}
            \delta(e_{i,s})=\sum_{p+q=s}^se_{i,p}\otimes e_{i,q},
        \end{equation}
        
        \noindent the bubble slides hold for all $f=e_{i,s}$, $s\geq 1$.
        
        \item If $\langle i,\mu\rangle\geq 0$ then \eqref{eq:BubblesSymFun2} holds for all $s>0$. Since
        \begin{equation}
            h_{i,s}\otimes 1=\sum_{p+q=s}(-1)^p(1\otimes e_{i,p})\delta(h_{i,q}),
        \end{equation}
        
        \noindent by induction on $s$ the bubble slides hold for all $f=h_{i,s}$, $s\geq 1$.
        
        \item If $-\lam{k}_i < \langle i,\mu\rangle <0$ then \eqref{eq:BubblesSymFun3} holds for $0<s<\lam{k}_i$ and \eqref{eq:BubblesSymFun1} holds for $s\geq \lam{k}_i$. Since
        \begin{equation}
            0=\sum_{p+q+r=s}(-1)^{p+q}(e_{i,p}\otimes e_{i,q})\delta(h_{i,r}),
        \end{equation}
        
        \noindent by induction the bubble slides hold for $f=h_{i,s}$ for $0<s<\lam{k}_i$ and they hold for $f=e_{i,s}$ for $s\geq \lam{k}_i$ as in (\ref{item:BubbleSlidesCase1}).
    \end{enumerate}
\end{proof}

\section{Unfurling 2-representations}\label{sec:unfurling}

Recall from \S\ref{subsec:DefTPC} that $\defTP$ is a deformation of the tensor product categorification $\TP$ over the algebra $\ATP$ which consists of certain symmetric polynomials in a set $Z$ of indeterminates and is regarded as a subset of  $\KK=\overline{\kk(Z)}$ (see \S\ref{subsec:TPWeylMods}). In this section we study $\defTP$ at the generic point of the deformation; that is, we study the idempotent completion $\genTP$ of $\defTPstar\otimes_{\ATP}\KK$.

Define a new symmetric simply-laced Cartan datum with indexing set ${\tilde{I}=I\times Z}$, weight lattice $\tilde{X}=X\times Z$, and symmetric bilinear form obtained from that on $X$ via
\begin{equation}
    ((\mu,z),(\mu',z'))=\delta_{z,z'}(\mu,\mu').
\end{equation}

\noindent for $(\mu,z),(\mu',z')\in \Xtild$. We call $\tilde{I}$ an \emph{unfurling} of $I$ and we identify the corresponding Lie algebra $\gtild$ with a direct sum $\g^{\oplus Z}$ of copies of $\g$ indexed by $z\in Z$.

Morphism spaces in $\genTP$ are (ungraded) $\KK$-vector spaces and the deformed relations \eqref{eq:DeformedRels1}-\eqref{eq:DeformedRels3} in $\defTP$ imply that a dot acting on a black string in a Stendhal pair $S\in\ob(\genTP)$ has spectrum contained in $Z$. In \cite{Web16}, Webster showed that the decomposition of $S$ into generalized eigenspaces indexed by $Z$ induces a 2-representation of the categorified quantum group $\catQGstarKtild$ for $\gtild$ on $\genTP$ where a dot in $\catQGstarKtild$ acts locally nilpotently. Moreover, he showed that there is a 2-natural isomorphism
\begin{equation}\label{eq:UnfurlEquivIntro}
    \eta:\mc{U}^{\tilde{\lambda},*}_{\KK}(\tilde{\mf{g}})\longrightarrow \genTP,
\end{equation}

\noindent between $\genTP$ and a cyclotomic quotient of $\catQGstarKtild$. Since the trace decategorification of cyclotomic quotients is known from \cite{BHLW17}, this allows us to determine the structure of $\Tr(\genTP)$ in \S\ref{Sec:MainTheorem}.

Actually, the setting considered in \cite{Web16} is slightly different; Webster considers a categorification of a tensor product of highest \emph{and lowest} weight modules with a less generic deformation. The main ideas are the same, but since \cite{Web16} is a preprint and we need to understand the structure of the 2-representation of $\catQGstarKtild$ on $\genTP$ in some detail for \S\ref{sec:Flatness} and \S\ref{Sec:MainTheorem}, we give a relatively complete description of its construction.

\begin{remark}
    Since the underlying field and Lie algebra vary in this section, we will take care to include them in our notation. This applies particularly to the categorified quantum group $\catQGk$, cyclotomic quotients $\cyclolamstark$, current algebra $\currk$, and Weyl modules $\Wulamk$. Since we only ever consider (deformed) tensor product categorifications over $\kk$ with respect to $\g$, we still write these as $\TP$ and $\defTP$.
\end{remark}

%In this section we will frequently have to describe a polynomial of dots acting on a black string. We will use the following short hand notation: For a polynomial $q(w)=a_mw^m+\cdots +a_1w+a_0\in\KK[w]$, write
%

\subsection{Spectrum of dots}

Recall that if $q(w)=\sum_{r=0}^m a_rw^r$ is a polynomial then we will write
\begin{equation}
    \begin{tikzpicture}[very thick,baseline=(current bounding box.center),rounded corners]
            \draw (-2,.75)--(-2,.5);
            \draw (-2.5,0) rectangle (-1.5,.5) node[midway]{$q(y)$};
            \draw[->] (-2,0)--(-2,-.5) node[at end, below]{$i$};
    \end{tikzpicture} \ = \ a_m
    \ \begin{tikzpicture}[very thick,baseline=(current bounding box.center),rounded corners]
        \draw[->] (-2,.75)--(-2,.-.5) node[at end, below]{$i$};
        \draw[fill] (-2,.15) circle (1.5pt) node[right]{$m$};
    \end{tikzpicture} \ + \ \cdots \ + a_1
    \begin{tikzpicture}[very thick,baseline=(current bounding box.center),rounded corners]
        \draw[->] (-2,.75)--(-2,.-.5) node[at end, below]{$i$};
	    \draw[fill] (-2,.15) circle (1.5pt) ;
    \end{tikzpicture}\ + \ a_0
    \begin{tikzpicture}[very thick,baseline=(current bounding box.center),rounded corners]
        \draw[->] (-2,.75)--(-2,.-.5) node[at end, below]{$i$};
    \end{tikzpicture}
\end{equation}

Recall that $\genTP$ is the idempotent completion of ${\defTPstar\otimes_{\ATP}\KK}$. Since the elements of $Z$ are scalars in $\KK$, the deformed relation \eqref{eq:DeformedRels1} factors in $\genTP$ to give
\begin{equation}\label{eq:DeformedRelsFactored}
    \begin{tikzpicture}[very thick,rounded corners, baseline=(current bounding box.center),scale=.5]
        \draw[fill=white] (-3.1,.4) rectangle (-.9,-.4) node[midway]{$r(y)$}; 
        \draw[->] (-2,-.4)--(-2,-1) node[below,at end]{$i$};
        \draw (-2,1)--(-2,.4);
        \draw[wei] (-.4,1)--(-.4,-1) node[below,at end]{$(k)$};
    \end{tikzpicture} \ = \ 
    \begin{tikzpicture}[very thick,baseline=(current bounding box.center),scale=.5]
        \draw[,-] (-2.8,0)  +(0,-1) .. controls (-1.2,0) ..  +(0,1) node[below,at start]{$i$};
        \draw[wei] (-1.2,0)  +(0,-1) .. controls (-2.8,0) ..  +(0,1) node[below,at start]{$(k)$};
    \end{tikzpicture} \hspace{.4in}
%    \begin{tikzpicture}[very thick,rounded corners, baseline=(current bounding box.center),scale=.5]
%        \draw[fill=white] (-2.6,.4) rectangle (-.4,-.4) node[midway]{$r(y) $}; 
%        \draw[->] (-1.5,-.4)--(-1.5,-1) node[below,at end]{$i$};
%        \draw (-1.5,1)--(-1.5,.4);
%        \draw[wei] (-3.1,1)--(-3.1,-1) node[below,at end]{$\lambda^{(k)}$};
%    \end{tikzpicture} \ = \ 
%    \begin{tikzpicture}[very thick,baseline=(current bounding box.center),scale=.5]
%        \draw[->] (-1.2,0)  +(0,-1) .. controls (-2.8,0) ..  +(0,1) node[below,at start]{$i$};
%        \draw[wei] (-2.8,0)  +(0,-1) .. controls (-1.2,0) ..  +(0,1) node[below,at start]{$\lambda^{(k)}$};
%    \end{tikzpicture},
\end{equation}

\noindent where $r(w)$ is the product of $(w-z)$ over $z\in Z_i^{(k)}$. This allows us to determine the spectrum of a dot on a black string in $\genTP$.

\begin{lemma}\label{Lem:EvaluesOfDots}
    Take a Stendhal pair $S\in\ob(\genTP)$ and let $B$ denote a black string in $S$ with label $i\in I$. Suppose that the first red string $R$ to the right of $B$ is labelled by $(k)$. Then a dot $y$ acting on $B$ satisfies a polynomial with roots in $Z^{(\leq k)}=\bigcup_{l\leq k}Z^{(l)}$.
\end{lemma}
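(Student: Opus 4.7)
My plan is induction on $k$, using two local identities in $\defTP$. Let $r_k(w) := \prod_{z \in Z_i^{(k)}}(w - z)$, so that \eqref{eq:DeformedRelsFactored} applied to $B$ with $R$ immediately to its right says that $r_k(y) \cdot 1_S$ equals a red/black ``bigon'' diagram in which $B$ crosses past $R$ and returns. Separately, the first equation in \eqref{eq:DotsAndRed} lets a dot on a black string slide freely through a red/black crossing. Consequently, inserting any polynomial $q(y)$ on $B$ in the middle of the bigon produces the endomorphism $q(y) \cdot r_k(y) \cdot 1_S$ of $S$, since the inserted dots can be slid out of the bigon through either crossing and then collected outside.

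The bigon factors as a vertical composition $c_2 \circ c_1$ through a ``middle slice'' which is the identity $1_{S'}$ on the Stendhal pair $S'$ obtained from $S$ by swapping $B$ past $R$. So if $q(y) \cdot 1_{S'} = 0$, then inserting $q(y)$ in the middle yields $c_2 \circ (q(y) \cdot 1_{S'}) \circ c_1 = 0$, and combining this with the dot-sliding identity gives $q(y) \cdot r_k(y) \cdot 1_S = 0$.

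For the base case $k = 1$, the pair $S'$ has $B$ to the right of every red string (since $R = (1)$ is rightmost in $S$), so $S'$ is violated and $1_{S'} = 0$; hence $r_1(y) \cdot 1_S = 0$ directly, and $y$ satisfies a polynomial with roots in $Z_i^{(1)} \subseteq Z^{(\leq 1)}$. For the inductive step, the red strings in $S'$ still appear in the order $(n), (n-1), \ldots, (1)$ from left to right, so the first red string to the right of $B$ in $S'$ is $(k-1)$. The inductive hypothesis applied to $S'$ then yields a polynomial $q$ with roots in $Z^{(\leq k-1)}$ annihilating $y \cdot 1_{S'}$, and $P(w) := q(w) \cdot r_k(w)$ has roots in $Z^{(\leq k-1)} \cup Z_i^{(k)} \subseteq Z^{(\leq k)}$ and annihilates $y \cdot 1_S$.

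The only point requiring care, rather than a genuine obstacle, is the bookkeeping needed to verify the two local identifications cleanly: that the middle slice of the bigon really factors through $1_{S'}$, and that the dot-sliding move equating ``$q(y)$ inserted in the middle'' with ``$q(y)$ multiplied against the whole bigon'' uses only the local relations \eqref{eq:DotsAndRed} and not any global feature of $S$. Both are immediate from the locality of the defining relations of $\defTP$, but worth spelling out before the induction.
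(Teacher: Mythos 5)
Your argument only works when $R$ is \emph{immediately} to the right of $B$ (and, as stated, only when $B$ is oriented downward — the upward case needs \eqref{eq:UpAndRed} in place of \eqref{eq:DeformedRelsFactored}, though that is an easy fix). The lemma, however, only asserts that $R$ is the first \emph{red} string to the right of $B$: there may be black strings between $B$ and $R$, and then the local relation \eqref{eq:DeformedRelsFactored} simply does not apply, so the bigon that drives your computation cannot even be formed. This is a genuine obstacle, not bookkeeping. It also derails your induction on $k$: after swapping $B$ past $R=(k)$, the new string $B'$ in $S'$ typically has black strings between it and its first red string $(k-1)$, so the inductive hypothesis would again be needed in full generality, which your argument does not supply.

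The paper's proof inducts instead on the total number of strings of either colour to the right of $B$, and splits into two cases according to whether the string immediately to the right of $B$ is red or black. The red case is handled essentially as in your proposal. The black case — the one you are missing — requires passing $B$ through the neighboring black string $C$ labelled $j$ and tracking how dots transform under the KLR relations: for $\langle i,j\rangle \in \{0,-1\}$ one uses \eqref{eq:nilHecke-1} and \eqref{eq:black-bigon} (and the key cancellation $t_{ij}=-t_{ji}$); for $i=j$ with matching orientation one conjugates dots by the intertwiner $\tau$ as in \cite[Lemma~2.1]{BK09}; for $i=j$ with opposite orientation one uses the extended $\mf{sl}_2$ relation \eqref{eq:Extendedsl2}. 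That half of the argument is the harder half, and it is entirely absent from your proposal.
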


\begin{proof}
    We proceed by induction on the total number of strings (red or black) to the right of $B$.
    
    First suppose there are no black strings between $B$ and $R$. Let $S'$ denote the Stendhal pair obtained from $S$ by swapping $B$ and $R$ and denote their images in $S'$ by $B'$ and $R'$, respectively. By the inductive hypothesis, a dot on $B'$ satisfies a polynomial $q'(w)$ with roots in $Z^{(\leq k-1)}$. The relations \eqref{eq:UpAndRed} and \eqref{eq:DeformedRelsFactored} imply that in $\genTP(S,S)$,
    \begin{equation*}
          \begin{tikzpicture}[very thick, rounded corners,baseline=(current bounding box.center)]
		\draw[very thick] (-1,-.5)--(-1,-.25) node[below, at start]{$i$};
		\draw (-1.5,-.25) rectangle (-.5,.25) node[midway]{$q'(y)$};
		\draw[->,very thick] (-1,.25)--(-1,.75);
		\draw[wei] (0,-.5)--(0,.75) node[below,at start]{$(k)$};
	\end{tikzpicture} \ = \
	\begin{tikzpicture}[very thick,rounded corners,scale=.65,baseline=(current bounding box.center)]
		 \draw[fill=white!30] (-2,-.375) rectangle (-.5,.375) node[midway]{$q'(y)$};
		 \draw[wei] (-1.2,0)+(0,-1) .. controls (-2.8,0) ..  +(0,1) node[below,at start]{$(k)$};
		\draw[ postaction={decorate,decoration={markings, mark=at
		     	     position .5 with {\arrow[scale=1.95] {}}}}] (-2.7,.9)--(-2.8,1);
		\begin{scope}[on background layer] \draw[very thick, ->] (-2.8,0) +(0,-1)
		    	  .. controls (-.6,0) ..  +(0,1) node[below,at
		   	   start]{$i$}; \end{scope}
	\end{tikzpicture} \ = \ 0, \qquad
	  \begin{tikzpicture}[very thick, rounded corners,baseline=(current bounding box.center)]
		\draw[<-] (-1,-.5)--(-1,0) node[below, at start]{$i$};
		\draw (-1.75,0) rectangle (-.25,.5) node[midway]{$q'(y)r(y)$};
		\draw[] (-1,.5)--(-1,.75);
		\draw[wei] (0,-.5)--(0,.75) node[below,at start]{$(k)$};
	\end{tikzpicture} \ = \
	\begin{tikzpicture}[very thick,rounded corners,scale=.65,baseline=(current bounding box.center)]
		 \draw[fill=white!30] (-2,-.375) rectangle (-.5,.375) node[midway]{$q'(y)$};
		 \draw[wei] (-1.2,0)+(0,-1) .. controls (-2.8,0) ..  +(0,1) node[below,at start]{$(k)$};
		\begin{scope}[on background layer] \draw[very thick, <-] (-2.8,0) +(0,-1)
		    	  .. controls (-.6,0) ..  +(0,1) node[below,at
		   	   start]{$i$}; \end{scope}
	\end{tikzpicture} \ = \ 0,
    \end{equation*}
    
    \noindent where $r(w)$ is the product of $(w-z)$ taken over all $z \in Z_i^{(k)}$. The claim follows.
    
    Now suppose that the string $C$ immediately to the right of $B$ in $S$ is black and labelled by $j\in I$. Let $S'$ denote the Stendhal pair obtained by swapping $B$ and $C$ and denote their images in $S'$ by $B'$ and $C'$ respectively. By the inductive hypothesis, a dot acting on $C$ in $S$ (resp.\ $B'$ in $S'$) satisfies a polynomial $q(w)$ (resp.\ $q'(w)$) with roots in $Z^{(\leq k)}$.
    
    First suppose that $B$ and $C$ have the same orientation. Without loss of generality we may assume they are both oriented down. Recall that the KLR relations \eqref{eq:nilHecke-1} and \eqref{eq:black-bigon} hold on downward-oriented black strings.
    
    If $\langle i,j\rangle=0$ then we can pull $B$ through $C$ and dots commute with the crossing, so $q'(y)$ acts as zero on $B$. If $\langle i,j\rangle =-1$ then
    \begin{equation}
        0 \ = \
	    \begin{tikzpicture}[very thick,rounded corners,scale=.65, baseline=(current bounding box.center)]
			 \draw[fill=white!30] (-2,-.375) rectangle (-.5,.375) node[midway]{$q'(y)$};
			 \draw[<-] (-1.2,0)+(0,-1) .. controls (-2.8,0) ..  +(0,1) node[below,at start]{$j$};
			\begin{scope}[on background layer] \draw[very thick, <-] (-2.8,0) +(0,-1)
		    	  .. controls (-.6,0) ..  +(0,1) node[below,at
		   	   start]{$i$}; \end{scope}
	    \end{tikzpicture} \ = \ t_{ij} \ 
	    \begin{tikzpicture}[very thick, rounded corners,baseline=(current bounding box.center)]
			\draw[<-] (-1,-.5)--(-1,0) node[below, at start]{$i$};
			\draw (-1.5,0) rectangle (-.4,.5) node[midway]{$yq'(y)$};
			\draw[] (-1,.5)--(-1,.75);
			\draw[<-] (0,-.5)--(0,.75) node[below,at start]{$j$};
		\end{tikzpicture} \ + \ t_{ji} \
	    \begin{tikzpicture}[very thick, rounded corners,baseline=(current bounding box.center)]
			\draw[<-] (-1,-.5)--(-1,0) node[below, at start]{$i$};
			\draw (-1.5,0) rectangle (-.5,.5) node[midway]{$q'(y)$};
			\draw[] (-1,.5)--(-1,.75);
			\draw[<-] (0,-.5)--(0,.75) node[below,at start]{$j$};
			\draw[fill] (0,.125) circle (1.5pt);
		\end{tikzpicture} \ 
    \end{equation}
            
    \noindent Since $t_{ij}=-t_{ji}$ by \eqref{eq:ChoiceOfScalars}, repeated applications of this show that
    \begin{equation}
       \begin{tikzpicture}[very thick, rounded corners,baseline=(current bounding box.center)]
		    \draw[<-] (-1,-.5)--(-1,0) node[below, at start]{$i$};
		    \draw (-1.75,0) rectangle (-.25,.5) node[midway]{$q'(y)q(y)$};
		    \draw[] (-1,.5)--(-1,.75);
		    \draw[<-] (0,-.5)--(0,.75) node[below,at start]{$j$};
	    \end{tikzpicture} \ = \
	    \begin{tikzpicture}[very thick, rounded corners,baseline=(current bounding box.center)]
			\draw[<-] (-1,-.5)--(-1,0) node[below, at start]{$i$};
			\draw (-1.5,0) rectangle (-.6,.5) node[midway]{$q'(y)$};
			\draw[] (-1,.5)--(-1,.75);
			\draw[<-] (0,-.5)--(0,0) node[below,at start]{$j$};
			\draw (-.4,0) rectangle (.5,.5) node[midway]{$q(y)$};
			\draw[] (0,.5)--(0,.75);
		\end{tikzpicture} \ = \ 0.
    \end{equation}
    
    \noindent If $i=j$ then let
    \begin{equation}
        \begin{tikzpicture}[very thick,baseline=(current bounding box.center)]
            \node[scale=1.4] at (-1.3,0){$\tau$};
            \node[scale=1.2] at (-.7,0){$=$};
            \draw[<-] (.5,-.5)--(-.5,.5) node[below, at start]{$i$};
            \draw[<-] (-.5,-.5)--(.5,.5) node[below, at start]{$i$};
            \draw[fill] (-.25,-.25) circle (1.5pt);
        \end{tikzpicture} \ - \ 
        \begin{tikzpicture}[very thick,baseline=(current bounding box.center)]
            \draw[<-] (.5,-.5)--(-.5,.5) node[below, at start]{$i$};
            \draw[fill] (.25,-.25) circle (1.5pt);
            \draw[<-] (-.5,-.5)--(.5,.5) node[below, at start]{$i$};
        \end{tikzpicture} \ + \
        \begin{tikzpicture}[very thick,baseline=(current bounding box.center)]
            \draw[<-] (.5,-.5)--(.5,.5) node[below, at start]{$i$};
            \draw[<-] (-.5,-.5)--(-.5,.5) node[below, at start]{$i$};
        \end{tikzpicture}
    \end{equation}
    
    \noindent As in \cite[Lemma~2.1]{BK09}, this squares to the identity on $S$, and conjugating by $\tau$ sends a dot on $C$ to a dot on $B$. So $q(y)$ acts on $B$ as zero. 
    
    Now suppose that $B$ and $C$ have opposite orientations. Assume that $B$ (resp.\ $C$) is oriented up (resp.\ down). The other case is similar. If $i\neq j$ then we can pull $B$ through $C$ and dots commute with the crossing so $q'(y)$ acts as zero on $B$. If $i=j$ then for $r\in \NN$, the relation \eqref{eq:Extendedsl2} and KLR relations imply that
    \begin{equation}
		\begin{tikzpicture}[very thick, baseline=(current bounding box.center)]
			\draw[->] (-.5,-.5)--(-.5,.75) node[at start,below]{$i$};
			\draw[fill] (-.5,.15) circle (1.5pt) node[left]{$r$};
			\draw[<-] (.25,-.5)--(.25,.75) node[at start,below]{$i$};
			\node[scale=1.3] at (.6,.55){$\mu$};
		\end{tikzpicture}\ = \ - 
		\begin{tikzpicture}[very thick,scale=.65,baseline=(current bounding box.center)]
		      \draw[->] (-2.8,0) +(0,-1)
		      .. controls (-1.2,0) ..  +(0,1) node[below,at
		      start]{$i$}; 
			\node[scale=1.3] at (-.8,.9){$\mu$};
			\draw[<-] (-1.2,0)
		      +(0,-1) .. controls (-2.8,0) ..  +(0,1) node[below,at
		      start]{$i$}; \draw[fill] (-1.6,0) circle (2.25pt) node[right]{$r$};
		\end{tikzpicture} \ + \ 
        \begin{tikzpicture}[very thick, baseline=(current bounding box.center)]
		    \node[scale=1.3] at (-1.5,.3){$\displaystyle \sum_{\substack{\alpha+\beta+\gamma \\ =r+\langle i,\mu\rangle - 1}}$};
		    \draw[->] (0,-.6) -- (0,-.3) arc (180:0:0.3) -- (0.6,-.6);
		    \node at (0,-.8) {$i$};
		    \draw[fill] (.6,-.3) circle (1.5pt) node[right] {$\gamma$}; 
		    \draw[fill] (.6,1.2) circle (1.5pt) node[right]{$\alpha$};
   		   \draw[<-] (0,1.5) -- (0,1.2) arc (180:360:0.3) -- (0.6,1.5);
		    \node at (.6,1.7){$i$};
		    \node at (0,.5){$i$};
		    \draw[fill] (.8,.5) circle (1.5pt) node[right]{$\spadesuit + \beta$};
		    \draw[->] (.5,.75) arc(90:450:0.3);
		    \node[scale=1.3] at (2,1.3){$\mu$};
	    \end{tikzpicture}
    \end{equation}
            
    \noindent so if $q'(w)=c_mw^m+\cdots +c_0$ then
    \begin{equation}
        \begin{tikzpicture}[very thick, rounded corners,baseline=(current bounding box.center)]
			\draw[] (-1,-.5)--(-1,-.25) node[below, at start]{$i$};
			\draw (-1.5,-.25) rectangle (-.5,.25) node[midway]{$q'(y)$};
			\draw[->] (-1,.25)--(-1,.75);
			\draw[<-] (0,-.5)--(0,.75) node[below,at start]{$i$};
			\node[scale=1.3] at (.4,.25){$\mu$};
		\end{tikzpicture} \ = \ 
		\begin{tikzpicture}[very thick, baseline=(current bounding box.center)]
            \node[scale=1.3] at (-3.5,0){$\displaystyle \sum_{r=0}^m c_r$};
	        \node[scale=3] at (-2.75,0){$($};
	        \node[scale=1.3] at (-1.55,0){$\displaystyle \sum_{\substack{\alpha+\beta+\gamma \\ =r+\langle i, \mu \rangle - 1}}$};
	    \end{tikzpicture}
	    \begin{tikzpicture}[very thick, baseline=(current bounding box.center),scale=.8]
	        \draw[->] (0,-.6) -- (0,-.3) arc (180:0:0.3) -- (0.6,-.6);
		    \node at (0,-.8) {$i$};
		    \draw[fill] (.6,-.3) circle (1.5pt) node[right,scale=.9] {$\gamma$}; 
		    \draw[fill] (.6,1.2) circle (1.5pt) node[right,scale=.9]{$\alpha$};
   		    \draw[<-] (0,1.5) -- (0,1.2) arc (180:360:0.3) -- (0.6,1.5);
		    \node at (.6,1.7){$i$};
		    \node at (0,.5){$i$};
		    \draw[fill] (.8,.5) circle (1.5pt) node[right,scale=.9]{$\spadesuit + \beta$};
		    \draw[->] (.5,.75) arc(90:450:0.3);
		    \node[scale=1.2] at (1.8,1.3){$\mu$};
	        \node[scale=3] at (2.55,.5){$)$};
	    \end{tikzpicture}
    \end{equation}
            
    \noindent Apply $q(y)$ to the top of the left string on both sides of the equation. On the right hand side we can slide these new dots through the cup and so the whole expression is equal to zero. So $q'(y)q(y)$ acts on $B$ as zero and the claim holds.
\end{proof}

\subsection{2-representations on \texorpdfstring{$\mc{G}$\textsuperscript{\underline{$\lambda$}}}{GLambda}}

%\subsection{2-representations on $\mc{G}$\textsuperscript{\underline{$\lambda$}}}

Recall that the unfurled Cartan datum has indexing set $\Itild=I\times Z$ and weight lattice $\Xtild=X\times Z$. The associated graph $\tilde{\Gamma}\cong\Gamma\times Z$ inherits an orientation from $\Gamma$. We write $p$ for the projection $\Xtild\to X$. For notational convenience we identify $(\pm\Itild)^m=(\pm I)^m\times Z^m$ for $m\in \NN$, so, for example, $\E_{(-i,z)}$ means $\E_{-(i,z)}$ for $i\in I$ and $z\in Z$.

The definition of the corresponding categorified quantum group $\dot{\mc{U}}_{\KK}(\tilde{\mf{g}})$ depends on a choice of parameters - see \S\ref{subsec:parameters}. We use the geometric choice of scalars:
\begin{equation}
    t_{(i,z),(j,z')}=
    \begin{cases}
        t_{i,j} & \text{if }z=z' \\
        1       & \text{if }z\neq z'
    \end{cases}
\end{equation}

\noindent and the following bubble parameters:
\begin{equation}
    c_{(i,z),\mutild}=c_{i,\mutild_z},
\end{equation}

\noindent where $(i,z),(j,z')\in \Itild$ and $\mutild_z$ is the $z$-component of $\mutild\in\Xtild$.

There is an ungraded 2-representation of $\catQGstarK$ on $\genTP$ arising from the 2-representation of $\catQGstark$ on $\defTPstar$. Sections 3 and 4 of \cite{Web15} establish the following:

\begin{theorem}\cite[Theorem~3.13]{Web15}\label{Thm:2RepUnfurled}
    There is a 2-representation of $\catQGstarKtild$ on $\genTP$ such that dots in $\catQGstarKtild$ act locally nilpotently.
\end{theorem}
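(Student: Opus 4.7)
The plan is to promote the existing 2-representation of $\catQGstark$ on $\defTPstar\otimes_{\ATP}\KK$ to a 2-representation of $\catQGstarKtild$ on $\genTP$, by splitting each black strand labeled $i\in I$ into a direct sum of strands labeled $(i,z)\in\Itild$ indexed by the generalized eigenvalues of its dot. The core input is Lemma~\ref{Lem:EvaluesOfDots}: for every Stendhal pair $S\in\ob(\genTP)$ and every black strand $B$ of $S$, the dot $y_B$ satisfies a polynomial over $\KK$ with roots in $Z$, so standard spectral theory produces commuting idempotents $e_{B,z}$ projecting onto the generalized $z$-eigenspace of $y_B$. These multiply to a decomposition $1_S=\sum_{\underline{z}}e_{S,\underline{z}}$ indexed by functions $\underline{z}$ from the black strands of $S$ to $Z$, and passage to the idempotent completion $\genTP$ realizes each $e_{S,\underline{z}}$ as a direct summand of $S$.

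With this decomposition in hand, I would define the 2-representation as follows. To $\tilde\mu\in\Xtild$ assign the subcategory of $\genTP(p(\tilde\mu))$ cut out by those $e_{S,\underline{z}}$ whose reading of the labels $(\lvert i_k\rvert,\underline{z}(B_k))$ across the bottom of $S$ is consistent with $\tilde\mu$. The 1-morphism $\E_{(i,z)}$ adds an upward $i$-strand on the left and post-composes with projection onto the $z$-eigenspace of the new dot; $\F_{(i,z)}$ is defined dually. A dot labeled $(i,z)$ in $\catQGstarKtild$ acts as the nilpotent endomorphism $y-z$ on the corresponding strand, guaranteeing the required local nilpotency; crossings, cups, and caps are inherited from $\genTP$ sandwiched with the appropriate eigenspace idempotents; and bubbles are realized via $b_{\tilde\mu}$ together with the projections $a_k$, with transport past red strings governed by Proposition~\ref{prop:SymFuncTPC}.

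The substantive task is to verify the defining relations of $\catQGstarKtild$. These split according to whether two interacting strands share a $z$-label. When $z=z'$ one has $\langle (i,z),(j,z)\rangle=\langle i,j\rangle$, and the relation reduces to the corresponding relation in $\catQGstark$ acting on $\defTPstar$, modulo the dot shift $y\mapsto y-z$. When $z\neq z'$ one has $\langle (i,z),(j,z')\rangle=0$, and the desired relation merely asserts that the two strands commute; this is recovered from the fact that, on the appropriate eigenspace, the difference $(y_{B_1}-z)-(y_{B_2}-z')$ is invertible in $\KK$, so the KLR bigon relation \eqref{eq:black-bigon} can be inverted to make the crossing an isomorphism. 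The main obstacle is bookkeeping the scalars: matching the geometric $t_{(i,z),(j,z')}$ and bubble parameters $c_{(i,z),\tilde\mu}=c_{i,\tilde\mu_z}$ with what emerges from the spectral calculation, and reducing bubble relations in $\catQGstarKtild$ — especially the infinite Grassmannian equation \eqref{eq:InfGrass} — to their $\catQGstark$ counterparts after using Proposition~\ref{prop:SymFuncTPC} to disentangle bubbles from red strings. This is the delicate part of the argument and is where the cyclic conventions of \cite{BHLW16} must be tracked carefully.
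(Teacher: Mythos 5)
The paper does not actually prove this theorem: it cites \cite[Theorem~3.13]{Web15} and only \emph{describes} the construction (\S\ref{subsec:WeightDecomp}--\ref{subsec:KLRcompletions}), deferring well-definedness to Webster. Your sketch is an attempt at a genuine proof, and its opening moves --- decompose each black strand into generalized eigenspaces via Lemma~\ref{Lem:EvaluesOfDots}, let $\E_{(i,z)}$ add an $i$-strand and project, send a dot to $y-z$ --- agree with the construction the paper records. You also correctly flag the bubble/parameter bookkeeping. But there are two substantial problems.

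First, the claim that ``crossings, cups, and caps are inherited from $\genTP$ sandwiched with the appropriate eigenspace idempotents'' is wrong for crossings, and your $z\neq z'$ discussion does not repair it. If $(\ui,\uz)$ puts two $i$-strands in distinct eigenspaces $z\neq z'$, then in $\Rtild_m$ the bigon $\tilde\psi^2\eptild_{(\ui,\uz)}$ must equal $\eptild_{(\ui,\uz)}$ (up to the scalar $t_{(i,z),(i,z')}=1$), because $\langle(i,z),(i,z')\rangle=0$. But the candidate image $\psi^2\varep_{(\ui,\uz)}$ is governed by the \emph{original} bigon relation \eqref{eq:black-bigon}, which for $i=j$ gives $0$, not an identity. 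Noting that $(y_1-z)-(y_2-z')$ is invertible on the eigenspace explains \emph{why} a correction exists, but it does not produce the correct element: $\tilde\psi_{(i,z),(i,z')}$ must be a renormalization of $\psi$ by a rational function in $y_1-y_2$ so that the bigon computes to the identity, and similarly for $i\neq j$ with $\langle i,j\rangle=-1$ (where the original bigon equals $t_{ij}y_1+t_{ji}y_2$, again not the identity). Constructing these corrections and checking they satisfy the braid-type relations is exactly the content of Proposition~\ref{prop:KLRCompletions} (\cite[Proposition~3.3]{Web16}), which your argument bypasses; the paper explicitly warns that ``the formula for the image of $\psitild_l\eptild_{(\ui,\uz)}$ is more involved.''

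Second, your plan to verify the full list of $\catQGstarKtild$ relations directly --- cyclicity, adjunctions, all bubble relations, the infinite Grassmannian relation --- is far heavier than what is needed. The paper instead invokes \cite[Theorem~5.25]{Rou08}: once one has the refined weight decomposition $\genTP(\mu)=\bigoplus_{\mutild\in p^{-1}(\mu)}\genTP(\mutild)$ (which the paper extracts from the bubble generating functions in \eqref{eq:UnfurledWeight}, not from reading strand labels, though these should agree) together with an action of the unfurled KLR algebra $\Rtild_m$ on $\E^m$ compatible with that decomposition, the full 2-representation is determined. This reduction is what makes the argument manageable; without it, the bubble verification you describe as ``the delicate part'' is not merely delicate but essentially open-ended.
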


\noindent In this subsection we describe how to construct this 2-representation. The reader is referred to \cite[Theorem~3.13]{Web15} for the proof that it is well-defined.

%There is an ungraded 2-representation of $\catQGstarK$ on $\genTP$ arising from the 2-representation of $\catQGstark$ on $\defTPstar$. Recall from \S\ref{subsec:cyclo} that this consists

From the 2-representation of $\catQGstarK$ there is a decomposition $\genTP=\bigoplus \genTP(\mu)$ according to the weight $\mu\in X$ of the left-hand region of a diagram and compatible functors from $\catQGstarK(\mu,\nu)$ to the category of functors $\genTP(\mu)\to\genTP(\nu)$ for any ${\mu,\nu\in X}$.

In particular there are functors $\E_i1_{\mu}$ from  $\genTP(\mu)$ to $\genTP(\mu+\alpha_i)$ for all $i\in \pm I$ (recall that $\alpha_{-i}=-\alpha_i$). Define
\begin{equation}
    \E_i=\bigoplus_{\mu\in X} \E_i1_{\mu},\qquad \E=\bigoplus_{i\in I}\E_i,
\end{equation}

\noindent regarded as endofunctors of $\genTP$. For $(i,z)\in\pm\Itild$, let $\E_{(i,z)}$ denote the functor sending $A\in\ob(\genTP)$ to the generalized $z$-eigenspace of a dot acting on $\E_iA$ and given by restriction on morphisms. By Lemma~\ref{Lem:EvaluesOfDots} we can decompose $\E_i=\bigoplus_{z\in Z}\E_{(i,z)}$.

By \cite[Theorem~5.25]{Rou08}, to define a 2-representation of $\catQGstarKtild$ it remains to give a refined weight decomposition
\begin{equation}
    \genTP(\mu)=\bigoplus_{\mutild\in p^{-1}(\mu)}\genTP(\mutild)%\quad\text{with}\quad \E_{(i,z)}1_{\mutild}:\genTP(\mutild)\longrightarrow \genTP(\mutild+\alpha_{(i,z)})
\end{equation}

\noindent such that for any $(i,z)\in\pm\Itild$, the restriction $\E_{(i,z)}1_{\mutild}$ of $\E_{(i,z)}$ sends
\begin{equation}
    \genTP(\mutild)\longrightarrow \genTP(\mutild+\alpha_{(i,z)}),
\end{equation}

\noindent and to give a suitable action of the unfurled KLR algebra $\Rtild_m$ on $\E^m$.

\subsubsection{Weight decomposition of $\genTP$}\label{subsec:WeightDecomp}

The definition of the categories $\genTP(\mutild)$ below is equivalent to that in \cite[\S3.2]{Web15}.

Take a weight $\mu\in X$ and a Stendhal pair $S=(\ui,\kappa)\in\ob(\genTP(\mu))$ with $\ui\in (\pm I)^m$ and for $k\in[1,m]$ let $y_k$ denote a dot acting on the black string in $S$ labelled by $\lvert i_k\rvert$. By Lemma~\ref{Lem:EvaluesOfDots}, $y_k$ has generalised eigenvalues in $Z$. For a sequence $\uz=(z_1,\ldots ,z_m)\in Z^m$ let $S_{\uz}\in\ob(\genTP)$ denote the simultaneous generalised eigenspace of $S$ where $y_k$ has eigenvalue $z_k$ (note that the $y_k$ commute). Let $\mc{I}_{\uz}$ denote the 2-sided ideal of $\genTP(S_{\uz},S_{\uz})$ generated by $y_k-z_k$ for $1\leq k\leq m$.%, so $y_k$ acts as $z_k$ in $\genTP(S_{\uz},S_{\uz})\big/ \mc{I}_{\uz}$.

Recall the generating functions $e_j(x)$ for $j\in I$ from \S\ref{subsec:SymmetricFunc} and the isomorphism $b_{\mu}$ with bubbles from \S\ref{subsec:BubblesInU}. The dot $y_k$ acts as $z_k$ in $\genTP(S_{\uz},S_{\uz})\big/ \mc{I}_{\uz}$ so by the bubble slides in Lemma~\ref{Lem:SlidesCatQG} and \eqref{eq:SlidesTPGenFunc1}, $b_{\mu}(e_j(x))$ acts in $\genTP(S_{\uz},S_{\uz})\big/ \mc{I}_{\uz}$ as a rational function in $x$. We define the weight $\mutild\in\Xtild$ of $S_{\uz}$ as the unique weight such that
\begin{equation}\label{eq:UnfurledWeight}
    b_{\mu}(e_j(x))\circ 1_{S_{\uz}} \equiv \prod_{z\in Z}(1+zx)^{
    \langle (j,z),\mutild\rangle}1_{S_{\uz}}\qquad (\text{mod}~\mc{I}_{\uz})
\end{equation}

\noindent for all $j\in I$. From the bubble slides and induction, $\sum_z \langle (j,z),\mutild\rangle=\langle j,\mu\rangle$ and so $\mutild\in p^{-1}(\mu)$.

Let $\genTP(\mutild)$ denote the full, additive subcategory of $\genTP$ generated by objects $S_{\uz}$ with weight $\mutild$ defined as above, and closed under taking direct summands. Since Stendhal pairs generate $\defTPstar$, there is an induced decomposition $\genTP(\mu)=\bigoplus\genTP(\mutild)$ taken over all $\mutild\in p^{-1}(\mu)$. If $(i,z)\in\pm\Itild$, then $\E_{(i,z)}$ denotes the generalized $z$-eigenspace of a dot on $\E_i$, so by Lemma~\ref{Lem:SlidesCatQG} acting by $\E_{(i,z)}$ contributes a factor of $(1+zx)^{\langle j,i\rangle}$ to \eqref{eq:UnfurledWeight}. Thus
\begin{equation}
    \E_{(i,z)}1_{\mutild}:\genTP(\mutild)\longrightarrow\genTP(\mutild+\alpha_{(i,z)})
\end{equation}

\noindent as required.

%Since the pair $(i_k,z_k)$ in $(\ui,\uz)$ contributes a factor of $(1+z_kx)^{\langle j,i_k\rangle}$ to \eqref{eq:UnfurledWeight}, by the definition of 

%Now take $(i,z)\in\pm\Itild$ and recall that $\E_{(i,z)}$ denotes the generalised $z$-eigenspace of a dot on $\E_i$. Clearly $\E_{(i,z)}S_{\uz}=S^+_{\uz^+}$ where $S^+$ is obtained from $S$ by adding a black $i$-string at the far left and $\uz^+=(z,z_1,\ldots ,z_m)$. By Lemma~\ref{Lem:SlidesCatQG}, the action of $e_j(x)$ on $S^+_{\uz^+}$ (mod $\mc{I}_{\uz^+}$) differs from that on $S_{\uz}$ (mod $\mc{I}_{\uz}$) by a factor of $(1+zx)^{\langle j,i\rangle}$ (modulo the respective ideals). So
%
%\begin{equation}
%    \E_{(i,z)}1_{\mutild}:\genTP(\mutild)\longrightarrow\genTP(\mutild+\alpha_{(i,z)})
%\end{equation}

%\noindent as required.

\subsubsection{Actions of KLR algebras}\label{subsec:KLRcompletions}

Take $m\in \NN$. Let $R_m$ denote the KLR algebra associated to $I$ defined over the field $\KK$ (see \S\ref{subsec:KLR}). Since the KLR relations hold on upward oriented strings in $\catQGk$, the 2-representation of $\catQGstark$ on $\genTP$ leads to algebra homomorphisms
\begin{equation}
    R_m\longrightarrow \genTP(\E^mA, \E^mA)
\end{equation}

\noindent which are natural in $A\in\ob(\genTP)$. The idempotent $\varep_{\ui}\in R_m$ acts as projection onto $\E_{\ui}A$ and by Lemma~\ref{Lem:EvaluesOfDots} a dot $y_k\varep_{\ui}$ acts with generalized eigenvalues in $Z$. We can package such actions in a completion $\Rhat_m$ in which we can formally separate the spectrum of the dot into components indexed by $z\in Z$. %We wish to pass to a homomorphism from the KLR algebra $\Rtild_m$ for $\Itild$ over $\KK$. %We do this by packaging actions of $R_m$ in which dots act with generalized eigenvalues in $Z$ in an action of a completion $\Rhat_m$ in which we can formally separate the spectrum of the dot into components indexed by $z\in Z$. 

%Actions of $R_m$ in which dots act with generalized eigenvalues in $Z$ can be packaged in an action of a completion $\Rhat_m$ in which we can formally separate the spectrum of the dot into components indexed by $z\in Z$. Similarly an action of $\Rtild_m$ where dots act locally nilpotently can be packaged in an action of a completion $\Rtildhat_m$. In \cite{Web15} Webster constructed an isomorphism $\Rtildhat_m\cong \Rhat_m$ sending an $(i,z)$-colored string in $\Rtild_m$ to the $z$-component of an $i$-colored string in $\Rhat_m$. In the rest of this section we make this more precise.

For $N\in \NN$, let $J_m^{(N)}$ be the two-sided ideal of $R_m$ generated by elements
\begin{equation}
    \left(\prod_{z\in Z} (y_k-z)^N\right)\varep_{\ui}
\end{equation}

\noindent as $\ui$ ranges over $I^m$ and $k$ ranges over $[1,m]$. These form a decreasing chain ${J_m^{(1)}\supseteq J_m^{(2)}\supseteq J_m^{(3)}\supseteq\cdots}$ so we can form the completion:
\begin{equation}
    \Rhat_m=\lim_{\longleftarrow} R_m\big/ J_m^{(N)}.
\end{equation}

%\noindent This is a locally unital algebra with respect to the idempotents $\varep_{\ui}$. In particular it decomposes:
%
%\begin{equation}
%    \hat{R}_m=\bigoplus_{\ui,\ui'\in I^m}
%    \varep_{\ui'}\hat{R}_m\varep_{\ui}.
%\end{equation}

If $A\in\ob(\genTP)$ then each $y_k\varep_{\ui}$ acts on $\E^mA$ with generalised eigenvalues in $Z$, so there is an induced homomorphism from $\Rhat_m$ to $\genTP(\E^mA,\E^mA)$. By abstract Jordan decomposition there is a unique decomposition $\varep_{\ui}=\sum_{\uz\in Z^m}\varep_{(\ui,\uz)}$ into mutually orthogonal idempotents such that if $\uz=(z_1,\ldots ,z_m)\in Z^m$ then $\varep_{(\ui,\uz)}$ projects onto the simultaneous generalized $(z_1,\ldots ,z_m)$-eigenspace for $(y_1\varep_{\ui},\ldots ,y_m\varep_{\ui})$ on $\E_{\ui}A$ and this operation is natural in $A$.

Let $\Rtild_m$ be the KLR algebra for $\Itild$ defined over $\KK$. To distinguish them from elements $R_m$, we write the generators of $\Rtild_m$ as $\{\eptild_{(\ui,\uz)}\mid (\ui,\uz)\in \tilde{I}^m\}$, $\{\ytild_k \mid 1\leq k\leq m\}$, and $\{\psitild_l \mid 1\leq l\leq m-1\}$. Note in particular that $\eptild_{(\ui,\uz)}$ denotes a generator of $\Rtild_m$, whereas $\varep_{(\ui,\uz)}$ denotes the element of $\Rhat_m$ projecting onto a generalized $\uz$-eigenspace for dots.

We can package actions of $\Rtild_m$ in which dots act \emph{nilpotently} into a completion. For $N\in\NN$, let $\tilde{J}^{(N)}_m$ be the two-sided ideal of $\Rtild_m$ generated by elements 
$\ytild_k^N\eptild_{(\ui,\uz)}$ as $(\ui,\uz)$ ranges over $\tilde{I}^m$ and $k$ ranges over $[1,m]$. Let $\Rtildhat_m$ denote the completion of $\Rtild_m$ along these ideals. %Any locally unital $\Rtild_m$-module on which the $\ytild_k\eptild_{(\ui,\uz)}$ act locally finitely has an induced action of $\Rtildhat_m$.

\begin{proposition}\cite[Proposition~3.3]{Web16}\label{prop:KLRCompletions}
    There is an algebra isomorphism ${\Rtildhat_m\to \Rhat_m}$ sending
    \begin{equation}
        \eptild_{(\ui,\uz)}\longmapsto \varep_{(\ui,\uz)} \qquad \ytild_k\eptild_{(\ui,\uz)}\longmapsto (y_k-z_k)\varep_{(\ui,\uz)}
    \end{equation}
    
    \noindent for $(\ui,\uz)\in \Itild^m$ and $k\in[1,m]$.
\end{proposition}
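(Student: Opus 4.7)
The strategy is to specify the images of the generators $\eptild_{(\ui,\uz)}$, $\ytild_k$, $\psitild_l$ of $\Rtild_m$ in $\Rhat_m$, verify that the KLR relations of $\Rtild_m$ hold under these assignments, extend continuously to the completions, and finally exhibit an inverse constructed the same way.

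The first step is to produce orthogonal idempotents $\varep_{(\ui,\uz)}\in\Rhat_m$. By definition of $\Rhat_m$ we have $\prod_{z\in Z}(y_k-z)^N\varep_{\ui}\in J_m^{(N)}$ for every $N$, so modulo $J_m^{(N)}$ the element $y_k\varep_{\ui}$ satisfies a polynomial with distinct scalar roots lying in $Z$. A standard idempotent lifting argument, or equivalently the Chinese Remainder Theorem applied modulo each $J_m^{(N)}$ followed by an inverse limit, produces commuting orthogonal idempotents $e_{k,z}$ with $\sum_{z\in Z}e_{k,z}=\varep_{\ui}$ such that $(y_k-z)e_{k,z}$ is topologically nilpotent. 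Setting $\varep_{(\ui,\uz)}:=e_{1,z_1}\cdots e_{m,z_m}$ then yields the required orthogonal idempotents summing to $\varep_{\ui}$ and makes $(y_k-z_k)\varep_{(\ui,\uz)}$ topologically nilpotent, which justifies the assignment $\ytild_k\eptild_{(\ui,\uz)}\mapsto(y_k-z_k)\varep_{(\ui,\uz)}$ of the proposition.

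For $\psitild_l\eptild_{(\ui,\uz)}$ the natural choice $\psi_l\varep_{(\ui,\uz)}$ already satisfies the required relations in almost every case: the quadratic, dot-slide and braid relations of $\Rtild_m$ for adjacent labels $(i_l,z_l),(i_{l+1},z_{l+1})$ become formally identical to the corresponding $R_m$-relations for $i_l,i_{l+1}$ after substituting $\ytild_k\leftrightarrow y_k-z_k$, using that $(\alpha_{(i,z)},\alpha_{(j,z')})=\delta_{z,z'}(\alpha_i,\alpha_j)$ and that $t_{(i,z),(j,z')}=1$ when $z\ne z'$. The one genuinely different case is $i_l=i_{l+1}$ with $z_l\ne z_{l+1}$: in $R_m$ one has $\psi_l^2\varep_{\ui}=0$, whereas $\psitild_l^2\eptild_{(\ui,\uz)}=\eptild_{(\ui,\uz)}$ in $\Rtild_m$ because $\langle(i_l,z_l),(i_{l+1},z_{l+1})\rangle=0$. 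In $\varep_{(\ui,\uz)}\Rhat_m$ the element $y_l-y_{l+1}=(z_l-z_{l+1})+(y_l-z_l)-(y_{l+1}-z_{l+1})$ is invertible (its scalar part is nonzero and the rest is topologically nilpotent), so a Brundan--Kleshchev-style renormalization of $\psi_l\varep_{(\ui,\uz)}$ by a rational function in $y_l-y_{l+1}$ repairs the discrepancy.

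The main work will be verifying the full list of KLR relations of $\Rtild_m$ under these assignments: each verification is a case analysis on whether consecutive pairs of labels agree in $I$ and in $Z$, and reduces to a rational identity in the $y$'s provable from the relations of $R_m$ together with invertibility of the scalar differences $z-z'$ for $z\ne z'$. Continuity with respect to the two filtrations $J_m^{(N)}$ and $\tilde J_m^{(N)}$ then promotes the assignment to a well-defined algebra homomorphism $\Rtildhat_m\to\Rhat_m$. An inverse is built symmetrically by sending $\varep_{\ui}\mapsto\sum_{\uz}\eptild_{(\ui,\uz)}$, $y_k\varep_{\ui}\mapsto\sum_{\uz}(\ytild_k+z_k)\eptild_{(\ui,\uz)}$, and $\psi_l\varep_{(\ui,\uz)}$ to the inverse renormalization applied to $\psitild_l\eptild_{(\ui,\uz)}$; mutual inversion is then immediate on generators.
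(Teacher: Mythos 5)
The paper does not prove this statement: it is quoted directly from \cite[Proposition~3.3]{Web16}, so there is no ``paper's own proof'' to compare with. Judged on its merits, your outline follows the standard Brundan--Kleshchev/Webster strategy (CRT decomposition of $\varep_{\ui}$ into generalized eigenspace idempotents, shifting dots by $z_k$, renormalizing the crossings for mismatched labels, and passing to completions) and is the right shape of argument. Two genuine gaps:

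\begin{enumerate}
\item You assert that ``the one genuinely different case is $i_l=i_{l+1}$ with $z_l\neq z_{l+1}$,'' but there is a second one that you do not address: $\langle i_l,i_{l+1}\rangle=-1$ in $\Gamma$ with $z_l\neq z_{l+1}$. In $R_m$ the quadratic relation reads $\psi_l^2\varep_{\ui}=t_{i_li_{l+1}}y_l+t_{i_{l+1}i_l}y_{l+1}=t_{i_li_{l+1}}(y_l-y_{l+1})$ (since the geometric choice of scalars \eqref{eq:ChoiceOfScalars} forces $t_{ji}=-t_{ij}$ for adjacent $i,j$), while in $\Rtild_m$ the pair $((i_l,z_l),(i_{l+1},z_{l+1}))$ is orthogonal so $\psitild_l^2\eptild_{(\ui,\uz)}=\eptild_{(\ui,\uz)}$. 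A renormalization is needed here too, so your case analysis in the second paragraph is incomplete; the condition $t_{(i,z),(j,z')}=1$ for $z\neq z'$ does not make these two quadratic relations ``formally identical.''
\item The fix in this second case cannot be a ``rational function in $y_l-y_{l+1}$'' in the sense you describe. The correct crossing image must satisfy $\hat\psi_l^2=\varep_{(\ui,\uz)}$, which forces one to divide by a square root of the invertible element $t_{i_li_{l+1}}(y_l-y_{l+1})$ in the completed algebra; this is where the hypothesis that $\KK=\overline{\kk(Z)}$ is \emph{algebraically closed} enters (so the constant term $t_{i_li_{l+1}}(z_l-z_{l+1})$ has a square root, and one lifts it Hensel-style). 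You should state this explicitly; without it, the existence of the required normalizing element is not established. Even in the case $i_l=i_{l+1}$, $z_l\neq z_{l+1}$, the intertwiner used (e.g.\ the element $\tau$ of \cite[Lemma~2.1]{BK09}, recalled in Lemma~\ref{Lem:EvaluesOfDots}) is of the form $\psi_l\cdot(\text{polynomial in dots})+1$, not a rescaling of $\psi_l$ by a rational function, so the phrasing is imprecise there as well.
\end{enumerate}

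Finally, a smaller point: the braid relations for triples whose $z$-patterns mix (e.g.\ $z_l=z_{l+1}\neq z_{l+2}$) need their own verification that the chosen renormalizations are mutually compatible; you gesture at this but it is exactly the sort of case analysis where hidden factors tend to accumulate, and it is not ``immediate.'' Your outline is a reasonable sketch of Webster's argument, but as written it omits a case that does require work and misdescribes the nature of the correction factors.
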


\noindent The formula for the image of $\psitild_l\eptild_{(\ui,\uz)}$ is more involved and we will not need it explicitly.

Since dots in $R_m$ act on $\genTP$ with generalized eigenvalues in $Z$, the action of $R_m$ on $\E^mA$ for an object $A$ induces an action of the completion $\Rhat_m$. The action of $\Rtild_m$ comes from the compositions
\begin{equation}
    \Rtild_m\longrightarrow\Rtildhat_m\longrightarrow \Rhat_m\longrightarrow \genTP(\E^mA,\E^mA)
\end{equation}

\noindent which are natural in $A\in\ob(\genTP)$. For $(\ui,\uz)\in \Itild^m$, $\eptild_{(\ui,\uz)}$ acts as projection onto
\begin{equation}
    \E_{(\ui,\uz)}A=\E_{(i_1,z_1)}\cdots \E_{(i_m,z_m)}A,
\end{equation}

\noindent and each $\ytild_k\varep_{(\ui,\uz)}$ acts nilpotently. This completes our description of the construction of the 2-representation of $\catQGstarKtild$ on $\genTP$.

\subsection{Equivalence with cyclotomic quotient}

In this subsection we analyze the structure of $\genTP$ as a 2-representation of $\catQGstarKtild$.

Let $S\in\ob(\genTP)$ be a Stendhal pair. Take $k\in[1,n]$, $i\in I$, and $z\in Z_i^{(k)}$. Let $S_1$ be the Stendhal pair obtained from $S$ by placing a black string $B_1$ labelled by $i$ immediately to the left of the red $(k)$-string. Define $S_2$ by placing a black string $B_2$ labelled by $i$ and of the same orientation as $B_1$ immediately to the \emph{right} of the red string. For $j=1,2$, let $T_j\in\ob(\genTP)$ denote the generalized $z$-eigenspace for a dot on $B_j$ - a direct summand of $S_j$.

\begin{lemma}\label{Lem:CommutationInGenTP}
    There is an isomorphism $T_1\cong T_2$ and this intertwines a dot acting on $B_1$ with a dot on $B_2$.
\end{lemma}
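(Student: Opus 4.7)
The plan is to construct the isomorphism $T_1\cong T_2$ from the natural red--black crossing 2-morphisms in $\genTP$, and to verify the identification by computing the resulting double crossings via the deformed relation \eqref{eq:DeformedRels1}.

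First I would take $\sigma\colon S_1\to S_2$ to be the Stendhal diagram in which the black string crosses the red $(k)$-string from left to right, and $\tau\colon S_2\to S_1$ to be the crossing in the opposite direction. Relation \eqref{eq:DotsAndRed} tells us that a dot slides freely across a red--black crossing, so $\sigma$ and $\tau$ intertwine $y_{B_1}$ with $y_{B_2}$; in particular they restrict to morphisms between the generalized $z$-eigenspaces, yielding natural candidates $\sigma|_{T_1}\colon T_1\to T_2$ and $\tau|_{T_2}\colon T_2\to T_1$ that by construction intertwine the dot actions. The remaining task is then to show these (or suitable rescalings of them) realize the claimed isomorphism.

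By the deformed relation \eqref{eq:DeformedRels1}, equivalently its factored form \eqref{eq:DeformedRelsFactored}, the composition $\tau\sigma$ on $S_1$ equals the polynomial $\prod_{z'\in Z_i^{(k)}}(y_{B_1}-z')$ (and analogously for $\sigma\tau$ on $S_2$). On $T_1$ this factors as $(y_{B_1}-z)\cdot u$ where $u:=\prod_{z'\in Z_i^{(k)}\setminus\{z\}}(y_{B_1}-z')$ acts with generalized eigenvalues $\prod_{z'\neq z}(z-z')$; these are nonzero elements of $\KK$, since the indeterminates in $Z$ are distinct, so $u$ is invertible, as is the analogous factor $u'$ on $T_2$. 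Absorbing the invertible factors by rescaling $\tilde\sigma:=u^{-1}\sigma$ and $\tilde\tau:=(u')^{-1}\tau$ produces the candidate mutually inverse pair intertwining the dots.

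The main obstacle is the residual factor $(y-z)$, which is only nilpotent and not identically zero on the generalized eigenspaces, so the naive compositions $\tilde\tau\tilde\sigma$ and $\tilde\sigma\tilde\tau$ differ from the identity by a nilpotent operator. The plan to resolve this is to exploit the dot-intertwining property: $\sigma$ and $\tau$ transport the nilpotent operator $(y_{B_1}-z)$ on $T_1$ to $(y_{B_2}-z)$ on $T_2$ and back, so their Jordan-block structures are matched on the two sides. Working in the $\KK$-linear idempotent-complete category $\genTP$, this matching---together with the invertibility of $u,u'$---should upgrade the rescaled crossings to genuine mutually inverse isomorphisms, but making this last step precise (in particular controlling the full Jordan decomposition rather than merely the underlying eigenspace) is where I expect the real work to lie.
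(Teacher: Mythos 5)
Your approach—taking the red--black crossings as the candidate maps $T_1\to T_2$ and $T_2\to T_1$, observing from \eqref{eq:DotsAndRed} that they intertwine dots and hence restrict to the generalized $z$-eigenspaces, and computing the double crossing via \eqref{eq:DeformedRels1}/\eqref{eq:DeformedRelsFactored}---is exactly the paper's. The obstacle you hit, and cannot get past, comes from reading the hypothesis literally as $z\in Z_i^{(k)}$. That is a typo: the paper's own proof explicitly invokes ``$z\notin Z_i^{(k)}$'' so that $r(w)$ and $(w-z)$ are coprime, and the application in Corollary~\ref{Cor:CommutationInGenTP} also forces it (there $z\in Z^{(\leq k)}$ and the red string being crossed is the $(k+1)$-string, so $z\notin Z_i^{(k+1)}$).

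With the intended hypothesis $z\notin Z_i^{(k)}$ the nilpotent residue you worried about simply is not there. Since $z$ is not a root of $r(w)=\prod_{z'\in Z_i^{(k)}}(w-z')$, the operator $r(y)$ restricted to the generalized $z$-eigenspace $T_j$ has the single nonzero generalized eigenvalue $r(z)=\prod_{z'\in Z_i^{(k)}}(z-z')$, hence is already invertible. Concretely, as in the paper, coprimality of $r(w)$ and $(w-z)$ gives some $s$ in the completion $\Rhat_1$ with $r(y)\,s\,\varep_{(i,z)}=\varep_{(i,z)}$, and then $s\varep_{(i,z)}D_1$ and $\varep_{(i,z)}D_2$ are mutually inverse. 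There is no $(y-z)$ left over to absorb.

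Your proposed ``Jordan-block matching'' repair cannot succeed under the hypothesis you assumed: if $z\in Z_i^{(k)}$ then $\tau\sigma=(y-z)u$ on $T_1$ is nilpotent times invertible, hence nilpotent, and no rescaling can make a map whose composite with something is nilpotent into an isomorphism. In fact the lemma is false in that regime: take $n=1$, $k=1$, $S=\emptyset$, $\lam{1}=\Lambda_i$, so $Z_i^{(1)}$ is a single indeterminate $z$. Then $S_2$ has a black string to the right of all red strings, so it is violated and $T_2=0$, while $T_1=S_1\neq 0$.

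Finally, a smaller omission: \eqref{eq:DeformedRelsFactored} and the resulting $r(y)$ factor pertain to the downward orientation only. When $B_1,B_2$ are oriented upward, relation \eqref{eq:UpAndRed} gives $D_1\cdot D_2=1_{S_2}$ and $D_2\cdot D_1=1_{S_1}$ outright, with no polynomial factor, and the crossings are mutually inverse as they stand. The paper treats the two orientations as separate cases, and you should as well.
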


\begin{proof}
    For $j=1,2$, we can consider dots acting on $B_j$ in $S_j$ as coming from an action of the KLR algebra $R_m$ with $m=1$. Recall the completion $\Rhat_1$ from \S\ref{subsec:KLRcompletions} and the element $\varep_{(i,z)}\in\Rhat_1$. This acts as projection onto $T_j$ and is independent of $j$. We can identify
    \begin{equation}
        \genTP(T_j,T_j)\cong \varep_{(i,z)}\genTP(S_j,S_j)\varep_{(i,z)}.
    \end{equation}
    
    Define Stendhal diagrams in $D_1\in\genTP(S_1,S_2)$ and $D_2\in\genTP(S_2,S_1)$ by
    \begin{equation}
        D_1 \ = \
        \begin{tikzpicture}[very thick,baseline=(current bounding box.center)]
    	    \draw[-] (.5,.5)--(-.5,-.5) node[at end, below]{$i$};
    		\draw[wei] (-.5,.5) -- (.5,-.5) node[at end,below]{$(k)$};
        \end{tikzpicture} \qquad \text{ and } \qquad
        D_2 \ = \ 
        \begin{tikzpicture}[very thick,baseline=(current bounding box.center)]
    	    \draw[-] (.5,-.5)--(-.5,.5) node[at start, below]{$i$};
    		\draw[wei] (-.5,-.5) -- (.5,.5) node[at start,below]{$(k)$};
        \end{tikzpicture}
    \end{equation}
    
    \noindent where the black string is given the same orientation as $B_1$ and $B_2$. Note that since dots can pass through red strings by \eqref{eq:DotsAndRed}, $D_1$ and $D_2$ intertwine the actions of $\Rhat_1$ on $S_1$ and $S_2$ so we can project them to morphisms between $T_1$ and $T_2$ by multiplying by $\varep_{(i,z)}$ on either the top or bottom.
    
    If $B_1$ and $B_2$ are both oriented upward then by relation \eqref{eq:UpAndRed} in $\defTP$, $D_1\cdot D_2=1_{S_2}$ and $D_2\cdot D_1=1_{S_1}$ so $\varep_{(i,z)}D_1$ and $\varep_{(i,z)}D_2$ are mutually inverse isomorphisms $T_1\cong T_2$. Suppose $B_1$ and $B_2$ are oriented downward. Let $r(w)=\prod (w-z')$ where the product is taken over $z'\in Z_i^{(k)}$. By \eqref{eq:DeformedRelsFactored}, separating the red and black strings in $D_1\cdot D_2$ and $D_2\cdot D_1$ introduces a factor of $r(y)$. Since $z\notin Z_i^{(k)}$, $r(w)$ and $(w-z)$ are coprime and so there exists $s\in\Rhat_1$ such that $r(y)s\varep_{(i,z)}=\varep_{(i,z)}$. Thus $s\varep_{(i,z)}D_1$ and $\varep_{(i,z)}D_2$ give mutually inverse isomorphisms $T_1\cong T_2$.
\end{proof}

\begin{corollary}\label{Cor:CommutationInGenTP}
    Let $S\in\ob(\genTP)$ be a Stendhal pair and take $k\in [1,n]$ and $i\in \pm I$. Suppose that there are no black strings to the left of the red $(k+1)$-string in $S$, and let $S^+$ denote the Stendhal pair obtained from $S$ by placing a black string labelled by $\lvert i\rvert$ and oriented according to the sign of $i$ immediately to the right of this red string. Then there is an isomorphism
    \begin{equation}
        \bigoplus_{z\in Z^{(\leq k)}} \E_{(i,z)}S\longrightarrow S^+
    \end{equation}
    
    \noindent intertwining a dot on the left-most black string in $\E_iS$ with a dot on the new string in $S^+$.
\end{corollary}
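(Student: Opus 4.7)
The plan is to construct the isomorphism by iteratively sliding the leftmost black string of $\E_i S$ through the red strings $(n), (n-1), \ldots, (k+1)$ in turn, applying Lemma~\ref{Lem:CommutationInGenTP} at each crossing. The hypothesis that there are no black strings to the left of the $(k+1)$-red string in $S$ guarantees that the leftmost black string $B$ of $\E_i S$ is separated from its target position in $S^+$ only by the red strings labelled $(n), (n-1), \ldots, (k+1)$, with no intervening black strings to worry about.

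First I would use Lemma~\ref{Lem:EvaluesOfDots} to note that a dot on $B$ in $\E_i S$ has spectrum in $Z^{(\leq n)} = Z$, giving a decomposition $\E_i S = \bigoplus_{z\in Z} \E_{(i,z)} S$. For each fixed $z \in Z^{(\leq k)}$, I then apply Lemma~\ref{Lem:CommutationInGenTP} successively to cross the generalized $z$-eigenspace of $B$ through the red $(\ell)$-string for $\ell = n, n-1, \ldots, k+1$. The crucial point is that the lemma's hypothesis is met at every step: in the downward-oriented case one needs $z \notin Z_i^{(\ell)}$ (so that the factor $r(y)$ appearing when separating the crossing has nonzero value $r(z)$), and this holds because $z \in Z^{(\leq k)} = \coprod_{\ell' \leq k} Z^{(\ell')}$ while $Z_i^{(\ell)} \subseteq Z^{(\ell)}$, and the sets $Z^{(\ell')}$ are pairwise disjoint; the upward-oriented case is unconditional. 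Each step intertwines dots, so composing the $n-k$ isomorphisms produces an isomorphism from $\E_{(i,z)} S$ onto the generalized $z$-eigenspace (for a dot on the new string) of $S^+$.

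To finish, I would observe that by Lemma~\ref{Lem:EvaluesOfDots} applied to $S^+$ (where the first red string to the right of the new black string is labelled $(k)$), a dot on the new string has spectrum in $Z^{(\leq k)}$, so $S^+ = \bigoplus_{z \in Z^{(\leq k)}}$ as generalized eigenspaces; summing the individual isomorphisms over $z \in Z^{(\leq k)}$ then yields the claimed isomorphism. The only real obstacle is the bookkeeping around the eigenvalue restrictions, namely checking that disjointness of the $Z^{(\ell)}$ exactly matches the eigenvalue range preserved by successive red-string crossings with the spectrum predicted by Lemma~\ref{Lem:EvaluesOfDots} at the final position; the categorical content is supplied entirely by Lemma~\ref{Lem:CommutationInGenTP}.
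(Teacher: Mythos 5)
Your proposal is correct and follows essentially the same approach as the paper's proof: apply Lemma~\ref{Lem:CommutationInGenTP} repeatedly to slide the new string past the red strings $(n),\ldots,(k+1)$ (using disjointness of the $Z^{(\ell)}$ to verify the lemma's eigenvalue hypothesis at each step), then invoke Lemma~\ref{Lem:EvaluesOfDots} to see that the spectrum of the dot on the new string in $S^+$ is exactly $Z^{(\leq k)}$, so summing over $z$ gives the full isomorphism. The paper's proof is much terser but the underlying argument and the lemmas cited are identical; your version simply makes explicit the bookkeeping the paper leaves to the reader.
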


\begin{proof}
    If $z\in Z^{(\leq k)}$ then by repeated applications of Lemma~\ref{Lem:CommutationInGenTP}, the generalized $z$-eigenspace of a dot on the new string in $S^+$ is isomorphic to $\E_{(i,z)}S$. But by Lemma~\ref{Lem:EvaluesOfDots} the eigenvalues of the dot in $S^+$ are contained in $Z^{(\leq k)}$, so the claim holds.
\end{proof}

Let $\emptyset\in\ob(\genTP)$ denote the trivial Stendhal pair with no black strings. Define a dominant weight for $\gtild$ by
\begin{equation}\label{eq:LamTilde}
    \lamtild=\sum_{i\in I}\sum_{z\in Z_i} \Lambda_{(i,z)},
\end{equation}

\noindent where $Z_i=\bigcup_k Z_i^{(k)}$. By the construction of the weight decomposition \S\ref{subsec:WeightDecomp} and the bubble slides \eqref{eq:SlidesTPGenFunc1} $\emptyset$ has weight $\lamtild$ in $\genTP$. Repeated applications of Corollary~\ref{Cor:CommutationInGenTP} show that $\emptyset$ generates $\genTP$ under the action of $\catQGstarKtild$. Moreover, $\E_{(i,z)}\emptyset=0$ for all $(i,z)\in\Itild$ and dots in $\catQGstarKtild$ act locally nilpotently on $\genTP$ by Theorem~\ref{Thm:2RepUnfurled}, so $\emptyset$ is a highest weight object in $\genTP$.

Recall the definition of cyclotomic quotients from \S\ref{subsec:cyclo}. By \cite[Theorem~4.25]{Rou12}, the undeformed cyclotomic quotient $\cyclostartild$ of $\catQGstarKtild$ of weight $\lamtild$ is the universal highest weight categorification on which dots act nilpotently, so there is a 2-natural transformation
\begin{equation}
    \eta:\cyclostartild\longrightarrow \genTP
\end{equation}

\noindent sending $1_{\lamtild}$ to $\emptyset$ and this is an isomorphism if and only if $\emptyset\ncong 0$ in $\genTP$. Webster shows this by explicitly constructing an inverse to $\eta$:

\begin{theorem}\cite[Lemma~4.8]{Web17}\label{Thm:GenTPEquiv}
    There is a 2-natural isomorphism
    \begin{equation}
        \eta:\cyclostartild\longrightarrow \genTP.
    \end{equation}
    
    \noindent sending $1_{\lamtild}$ to $\emptyset$.
\end{theorem}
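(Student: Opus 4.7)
The 2-natural transformation $\eta$ is produced by applying Rouquier's universal property of cyclotomic quotients \cite[Theorem~4.25]{Rou12} to the highest weight object $\emptyset \in \ob(\genTP)$ of weight $\lamtild$, on which dots act nilpotently by Theorem~\ref{Thm:2RepUnfurled}. The content of the statement is therefore to show that each component $\eta(\mutild) : \cyclostartild(\mutild) \to \genTP(\mutild)$ is an equivalence of categories.

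Essential surjectivity follows by iterating Corollary~\ref{Cor:CommutationInGenTP}. Starting from $\emptyset$, an arbitrary Stendhal pair $S$ is constructed by inserting its black strings one at a time, moving from the left-most red string rightward. Each insertion of a single black string immediately to the right of the $(k{+}1)$-st red string realises the result as
\[
\bigoplus_{z \in Z^{(\leq k)}} \E_{(i,z)}
\]
applied to the previously constructed pair. Iterating through all black strings of $S$, one obtains an isomorphism $S \cong \bigoplus_{\uitilde} \E_{\uitilde}\emptyset = \bigoplus_{\uitilde} \eta(\E_{\uitilde} 1_{\lamtild})$, where $\uitilde = ((i_1,z_1),\ldots,(i_m,z_m))$ ranges over all sequences of unfurled labels compatible with the positions and orientations of the black strings in $S$. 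Hence every object of $\genTP(\mutild)$ lies in the essential image of $\eta(\mutild)$.

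For full faithfulness, I would exhibit an explicit inverse on $2$-morphism spaces. The generators of Stendhal diagrams in $\TPaux\otimes\KK$ are dots, KLR-type crossings and cups/caps on black strings, and crossings between red and black strings. Under the isomorphisms above, each generator pulls back to an explicit $2$-morphism in $\cyclostartild$ built from the corresponding generators of $\catQGstarKtild$, restricted to the generalised eigenspaces cut out by the Jordan decomposition discussed in \S\ref{subsec:KLRcompletions}. Faithfulness then reduces to verifying the defining relations of $\defTPstar$ after this translation: the KLR-type relations and the red/black crossing relations \eqref{eq:DotsAndRed}--\eqref{eq:UpAndRed} transport immediately from their counterparts in $\catQGstarKtild$, since crossings with red strings lift to the identity between the direct-sum decompositions provided by Corollary~\ref{Cor:CommutationInGenTP}.

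The main obstacle will be verifying the deformed relation \eqref{eq:DeformedRels1}, equivalently its factored form \eqref{eq:DeformedRelsFactored}. On the $\genTP$ side, a dot immediately to the left of a red $(k)$-string has spectrum in $Z^{(\leq k)}$ by Lemma~\ref{Lem:EvaluesOfDots}, and the red/black cup-cap forces the factor $\prod_{z\in Z_i^{(k)}}(y-z)$. Under the translation this cup-cap corresponds to the cyclotomic cap relation \eqref{eq:defcyclorel} for the factor of $\catQGstarKtild$ indexed by each $z \in Z_i^{(k)}$. The delicate point is matching the polynomial coefficients in $\ATP$ with the bubble slide factors of Lemma~\ref{Lem:SlidesCatQG} tracked through the weight decomposition \eqref{eq:UnfurledWeight} and through every black string lying to the right of the one in question; this is where Proposition~\ref{prop:SymFuncTPC} and the generating function identities \eqref{eq:SlidesTPGenFunc1}--\eqref{eq:SlidesTPGenFunc3} should do the bookkeeping.
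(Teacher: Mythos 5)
The paper does not prove this theorem in-house: it derives the existence of $\eta$ from Rouquier's universal property and the criterion ``isomorphism iff $\emptyset\ncong 0$,'' then cites Webster's \cite[Lemma~4.8]{Web17}, which establishes the isomorphism by constructing an explicit inverse. Your proposal follows essentially the same route — Rouquier's universality for $\eta$, iterating Corollary~\ref{Cor:CommutationInGenTP} for essential surjectivity (this is precisely the paper's observation that $\emptyset$ generates $\genTP$), and an explicit inverse for full faithfulness — and you correctly identify verifying the deformed relation \eqref{eq:DeformedRels1} via the bubble slides of Proposition~\ref{prop:SymFuncTPC} as the technical crux; since that verification is exactly the content of Webster's lemma and you leave it as a sketch, your outline matches the paper's level of treatment without supplying the remaining details.
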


\section{Flatness of \texorpdfstring{$\check{\mc{X}}$\textsuperscript{\underline{$\lambda$},*}}{XLambda*}}\label{sec:Flatness}

In this section we use \S\ref{sec:unfurling} to show that $\defTPstar$ is a flat deformation of $\TPstar$ over $\ATP$ and deduce that $K_0(\defTP)\cong K_0(\TP)$ and $\defTP$ is equivalent to a deformed cyclotomic quotient if $\ulambda=(\lambda)$. The results in this section are tangential to the rest of the paper, but we include them since they follow relatively easily from \S\ref{sec:unfurling} and are not recorded elsewhere.

Recall from \S\ref{subsec:TPWeylMods} that $\ATP\leq \KK=\overline{\kk(Z)}$ is a graded local ring with unique simple graded module $\kk$ on which all $z\in Z$ acts as zero. The following is well-known, but we state it precisely for sake of clarity:

\begin{lemma}[Upper semi-continuity of dimension]\label{Lem:UpperSemi}
    Let $M$ be a finitely-generated graded right $\bA_{\ulambda}$-module. Then
    \begin{equation}
        \dim_{\kk}M\otimes_{\bA_{\ulambda}}\kk \geq \dim_{\KK}M\otimes_{\bA_{\ulambda}}\KK
    \end{equation}
    
    \noindent with equality if and only if $M$ is a free graded $\bA_{\ulambda}$-module.
\end{lemma}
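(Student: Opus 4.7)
The plan is to apply graded Nakayama's lemma together with flat base change, exploiting that $\bA_{\ulambda}$ is a polynomial ring, hence an integral domain, which in its graded incarnation is a graded local ring with residue field $\kk$ (the graded maximal ideal $\mf{m}$ being generated by the indeterminates $Z$).

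First I would set $n := \dim_{\kk} M\otimes_{\bA_{\ulambda}}\kk = \dim_{\kk}(M/\mf{m} M)$. By graded Nakayama, any homogeneous lift of a $\kk$-basis of $M/\mf{m}M$ gives a minimal homogeneous generating set of $M$, yielding a graded surjection
\begin{equation*}
    \pi:F := \bigoplus_{j=1}^n \bA_{\ulambda}\langle d_j\rangle \twoheadrightarrow M
\end{equation*}
for suitable shifts $d_j$. Since $\KK$ is obtained from $\bA_{\ulambda}$ by localization at the nonzero elements followed by an algebraic closure (both flat operations over a domain), tensoring the surjection $\pi$ with $\KK$ over $\bA_{\ulambda}$ yields a surjection $\KK^n \twoheadrightarrow M\otimes_{\bA_{\ulambda}}\KK$, which immediately gives the inequality $\dim_{\kk}M\otimes_{\bA_{\ulambda}}\kk \geq \dim_{\KK}M\otimes_{\bA_{\ulambda}}\KK$.

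For the equality clause, one direction is trivial: if $M\cong \bigoplus_j \bA_{\ulambda}\langle d_j\rangle$ is graded free of rank $n$, then both tensor products compute to dimension $n$. For the converse, suppose the inequality is an equality. Let $K=\ker\pi$, a graded submodule of the torsion-free module $F$ (torsion-free because $\bA_{\ulambda}$ is a domain). By flatness of $\KK$ over $\bA_{\ulambda}$, the short exact sequence $0\to K\to F\to M\to 0$ remains exact after applying $-\otimes_{\bA_{\ulambda}}\KK$. Comparing dimensions forces $K\otimes_{\bA_{\ulambda}}\KK=0$. But $K\subseteq F$ is torsion-free, so any nonzero element of $K$ would survive in $K\otimes_{\bA_{\ulambda}}\KK$; hence $K=0$ and $M\cong F$ is graded free.

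The argument is essentially routine commutative algebra; the only minor subtlety I expect is verifying flatness of $\KK$ over $\bA_{\ulambda}$ in the graded setting, but this follows because $\bA_{\ulambda}$ is a polynomial ring and $\KK$ contains its fraction field, so $\KK$ is a directed colimit of free (hence flat) $\bA_{\ulambda}$-modules.
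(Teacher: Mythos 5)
Your proof is correct. The paper states this lemma without proof, remarking only that it is well-known, so there is no authorial argument to compare against; your write-up supplies the standard one. The key ingredients—graded Nakayama to extract a minimal graded free cover $F\twoheadrightarrow M$ of rank $n=\dim_{\kk}M/\mathfrak{m}M$, flatness of $\KK$ over the graded-local domain $\bA_{\ulambda}$ (since $\text{Frac}(\bA_{\ulambda})$ is a flat localization and $\KK$ is a vector space over it), and torsion-freeness of the kernel as a submodule of a free module—are all correctly identified and correctly assembled. One small presentational point: the phrase ``algebraic closure is a flat operation'' is a bit loose as stated, but your subsequent justification (flatness over $\bA_{\ulambda}$ via the chain $\bA_{\ulambda}\to\text{Frac}(\bA_{\ulambda})\to\KK$) is exactly right, so the conclusion stands.
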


\noindent Note that since $\ATP$ is a graded local ring, $M$ is free if and only if it is flat.

So to establish that morphism spaces in $\defTPstar$ are flat $\ATP$-modules it suffices to compare the dimensions of morphism spaces in $\TPstar=\defTPstar\otimes_{\ATP}\kk$ and $\defTPstar\otimes_{\ATP}\KK$, or its idempotent completion $\genTP$. Dimensions of morphism spaces are encoded in the \emph{Euler form} on the Grothendieck group, so the problem essentially reduces to one of bilinear forms on $\g$-modules.

\begin{proposition}\label{Prop:Flatness}
    Morphism spaces in $\defTPstar$ are free over $\ATP$.
\end{proposition}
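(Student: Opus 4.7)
The plan is to apply Lemma~\ref{Lem:UpperSemi} to each morphism space. Fix Stendhal pairs $S, S'$ and set $M := \defTPstar(S, S')$, viewed as a graded right $\ATP$-module. Since idempotent completion does not affect Hom spaces between already-present objects,
\[
M \otimes_\ATP \kk \cong \TPstar(S, S'), \qquad M \otimes_\ATP \KK \cong \genTP(S, S').
\]
Writing $M$ as the directed union of its finitely generated graded submodules (possible because every homogeneous morphism is an $\ATP$-linear combination of finitely many Stendhal diagrams) and applying Lemma~\ref{Lem:UpperSemi} to each, freeness of $M$ over $\ATP$ reduces to the equality
\[
\dim_\kk \TPstar(S, S') \;=\; \dim_\KK \genTP(S, S')
\]
of dimensions in each fixed graded piece.

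Next I would compute each of these dimensions via the Euler form on the corresponding split Grothendieck group. For $\TPstar$, the isomorphism $K_0(\TP) \cong V_q^\ZZ(\ulambda)$ realizes graded Hom dimensions as matrix entries of the contravariant (Shapovalov) form on $V_q(\ulambda)$ evaluated at $[S], [S']$. For $\genTP$, Theorem~\ref{Thm:GenTPEquiv} combined with the cyclotomic decategorification~\eqref{eq:CyclotomicGG} applied to $\gtild = \g^{\oplus Z}$ yields $K_0(\genTP) \cong V_q^\ZZ(\lamtild)$ as a $\dot{U}_q^\ZZ(\gtild)$-module, and Hom dimensions there are similarly recovered as matrix entries of the contravariant form on $V_\KK(\lamtild)$.

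The remaining step is to match the two bilinear forms. Under the $\gtild$-module factorization $V_\KK(\lamtild) \cong \bigotimes_{z \in Z} V(\Lambda_{i(z)})$, where $i(z) \in I$ is characterized by $z \in Z_{i(z)}$, Corollary~\ref{Cor:CommutationInGenTP} realizes the class $[S]$ in $K_0(\genTP)$ as an iterated divided power of operators $\E_{(i,z)}$ applied to the pure tensor of highest weight vectors; likewise the class of $S$ in $K_0(\TP)$ is obtained by iterated divided powers of the $\E_i$'s inside $V_q(\ulambda)$. A direct character computation --- the decategorified shadow of the unfurling equivalence of Theorem~\ref{Thm:GenTPEquiv} --- then shows that the two Shapovalov forms agree on corresponding classes.

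The main obstacle will be exactly this last step: pinning down the precise matching of Shapovalov forms on $V_q(\ulambda)$ over $\g$ and on $V_\KK(\lamtild)$ over $\gtild = \g^{\oplus Z}$. A secondary technical point is verifying that the filtered union of finitely generated submodules of $M$ exhausts $M$ in a manner compatible with the degree-wise dimension comparison; this is straightforward given that each graded piece of $M$ is finite-dimensional over $\kk$.
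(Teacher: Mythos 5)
Your overall strategy coincides with the paper's: reduce freeness of $\defTPstar(S,S')$ over $\ATP$ to the dimension equality $\dim_\kk \TPstar(S,S') = \dim_\KK \genTP(S,S')$ via Lemma~\ref{Lem:UpperSemi}, and compute both sides as Euler form entries on Grothendieck groups, using $K_0(\TPstar)\cong\VZulam$ (over $\UEAdotZg$) and, via the unfurling equivalence of Theorem~\ref{Thm:GenTPEquiv}, $K_0(\genTP)\cong\VZlamtild$ (over $\UEAdotZgtild$). The obstacle you flag --- matching the two forms --- is indeed the crux, and as written it is a genuine gap: a ``direct character computation'' can only give dimensions of weight spaces, not the off-diagonal entries of the Shapovalov form, so it cannot by itself show that the two forms agree on corresponding classes.

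The paper closes this gap structurally rather than computationally. For each $k$ there is a natural $\UEAdotZg$-module inclusion $\VZ{\lam{k}}\into\bigotimes_{i\in I}\VZ{\Lambda_i}^{\otimes\lam{k}_i}$ sending the highest weight vector to the pure tensor of highest weight vectors; tensoring over $k$ and identifying $\gtild=\g^{\oplus Z}$ yields an embedding $\VZulam\into\VZlamtild$. Both sides carry (factorwise) contravariant forms normalized on the highest weight vector, and by the uniqueness of contravariant forms on highest weight modules this embedding is automatically an isometry --- no character computation is needed. One then checks by induction, using Corollary~\ref{Cor:CommutationInGenTP} together with the explicit description of $v_{S^+}$ from \cite[Theorem~4.38]{Web17}, that the embedding carries $v_S$ (the class of $S$ in $K_0(\TPstar)$) to $\vtild_S$ (its class in $K_0(\genTP)$). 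Combined with the fact that these decategorification isomorphisms intertwine the Euler forms with the (factorwise) Shapovalov forms, this gives $\dim_\kk\TPstar(S,S')=\langle v_S,v_{S'}\rangle=\langle\vtild_S,\vtild_{S'}\rangle=\dim_\KK\genTP(S,S')$ as required. Two smaller corrections: $\KK=\oK$ is not a graded field, so $\genTP$ is not a graded category and one cannot compare ``dimensions in each fixed graded piece''; the comparison is of total dimensions. Relatedly, $K_0(\genTP)$ carries no $q$-action, so it is $\VZlamtild$ (the integral classical highest weight module), not $V_q^\ZZ(\lamtild)$.
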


\begin{proof}
    Recall that $K_0(\TPstar)$ is obtained from $K_0(\TP)$ by tensoring over $\ZZ[q^{\pm 1}]$ with the module $\ZZ$ on which $q$ acts as 1, so if $\UEAdotZg$ is the idempotent form of the integral universal enveloping algebra for $\g$ and $\VZulam$ is a tensor product of integral highest weight modules for $\UEAdotZg$ then there is an isomorphism $K_0(\TPstar)\cong \VZulam$ of $\UEAdotZg$-modules sending the class of a Stendhal pair $S\in\ob(\TPstar)$ to a vector $v_S\in\VZulam$.
    
    The set $\Itild=I\otimes Z$ indexes simple roots for the unfurled Lie algebra $\gtild=\g^{\oplus Z}$. By Theorem~\ref{Thm:GenTPEquiv} there is a 2-natural isomorphism between $\genTP$ and $\cyclostartild$ - the undeformed cyclotomic quotient of $\catQGstarKtild$ of weight
    \begin{equation}
        \lamtild=\sum_{i\in I}\sum_{z\in Z_i} \Lambda_{(i,z)}.
    \end{equation}
    
    \noindent So there is an isomorphism $K_0(\genTP)\cong \VZlamtild$ of $\UEAdotZgtild$-modules sending the class of a Stendhal pair $S\in\ob(\genTP)$ to a vector $\vtild_S\in\VZlamtild$.
    
    For $k\in [1,n]$ there is an inclusion of $\UEAdotZg$-modules
    \begin{equation}\label{eq:InclusionHWMods}
        \VZ{\lam{k}}\longrightarrow \bigotimes_{i\in I}\VZ{\Lambda_i}^{\otimes \lam{k}_i}
    \end{equation}

    \noindent sending the cyclic vector $v_{\lam{k}}$ to the tensor product of the cyclic vectors. Taking the tensor product over all $k$ we get an embedding $\VZulam\into\VZlamtild$, where we use the identification $\gtild=\g^{\oplus Z}$. We claim this sends $v_S\mapsto \vtild_S$ for any Stendhal pair $S$.
    
    This is clear for the trivial Stendhal pair $\emptyset$ with no black strings. Take $k\in [1,n]$ and assume the claim holds for some $S$ with no black strings to the left of the red 
    $(k+1)$-string. Take $i\in \pm I$ and let $S^+$ denote the Stendhal pair obtained from $S$ as in Corollary~\ref{Cor:CommutationInGenTP}; that is, by placing a black string labelled by $\lvert i\rvert$ and oriented according to the sign of $i$ immediately to the right of this red string. We prove the claim for $S^+$. It then follows in general by induction.
    
    By \cite[Theorem~4.38]{Web17}, $v_{S^+}$ is obtained from $v_S\in\VZulam$ by applying $e_i$ to the first $k$ tensor factors. So by the construction of the map \eqref{eq:InclusionHWMods} and the inductive hypothesis, $v_{S^+}$ is mapped to
    \begin{equation}
        \sum_{z\in Z^{(\leq k)}}e_{(i,z)}\vtild_S
    \end{equation}
    
    \noindent in $\VZlamtild$, where $Z^{(\leq k)}=\bigcup_{l\leq k}Z^{(l)}$. But by Corollary~\ref{Cor:CommutationInGenTP}, 
    \begin{equation}
        S^+\cong\bigoplus_{z\in Z^{(\leq k)}}\E_{(i,z)}S
    \end{equation}
    
    \noindent in $\genTP$, so their classes are equal in the Grothendieck group and the claim follows.
    
    Recall that for a $\kk$-linear category $\mc{C}$, the Euler form is the bilinear form on the Grothendieck group $K_0(\mc{C})$ defined by
    \begin{equation}
        \langle [x],[y]\rangle=\dim_{\kk}\mc{C}(x,y)
    \end{equation}
    
    \noindent for $x,y\in \ob(\mc{C})$. By \cite[Theorem~4.38]{Web15}, the isomorphisms $K_0(\genTP)\cong \VZlamtild$ and $K_0(\TPstar)\cong \VZulam$ intertwine the Euler forms with the Shapovalov form and factorwise Shapovalov form, respectively. By the uniqueness of contravariant forms on highest weight modules, the inclusions \eqref{eq:InclusionHWMods}, and consequently the embedding $\VZulam\into \VZlamtild$, are isometries.
    
    Now take Stendhal pairs $S$ and $S'$. Then
    \begin{equation}
        \dim_{\kk}\defTPstar(S,S')=\langle v_S,v_{S'}\rangle=\langle \vtild_S,\vtild_{S'}\rangle=\dim_{\KK}\genTP(S,S'),
    \end{equation}
    
    \noindent so $\defTPstar(S,S')$ is a free $\ATP$-module by the upper semi-continuity of dimension - Lemma~\ref{Lem:UpperSemi}. Since flatness is preserved under taking direct sums and direct summands, this shows that every morphism space in $\defTPstar$ is free.
\end{proof}

Recall that there is a functor from $\defTP$ to $\TP$ given by taking the degree zero component of the projection $\defTPstar\to \TPstar$ defined by tensoring over $\ATP$ with $\kk$.

\begin{corollary}\label{Cor:DeformedGG}
    The functor $\defTP\to\TP$ induces an isomorphism of $\QGdotZ$-modules
    \begin{equation}
        K_0\left( \check{\mc{X}}^{\ulambda}\right)\longrightarrow K_0\left( \mc{X}^{\ulambda}\right)
    \end{equation}
    
    \noindent and so $K_0(\defTP)\cong V_q^{\ZZ}(\ulambda)$.
\end{corollary}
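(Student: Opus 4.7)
The plan is to show that the functor $F\colon \defTP\to\TP$, obtained from $\defTPstar\to\TPstar$ by tensoring morphism spaces over $\ATP$ with $\kk$ and restricting to degree zero, induces a bijection on isomorphism classes of indecomposable objects, from which the isomorphism on split Grothendieck groups follows. Since both categories are idempotent complete graded $\kk$-linear categories in which every object is a finite direct sum of grading shifts of summands of Stendhal pairs, it suffices to show that $F$ induces a bijection between primitive idempotents (up to conjugation) in endomorphism rings of Stendhal pairs.

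First I would observe that, for any Stendhal pair $S$, Proposition~\ref{Prop:Flatness} implies that $\defTPstar(S,S)$ is a free graded right $\ATP$-module, hence so is its degree zero part $\defTP(S,S)$, and the canonical reduction map $\defTP(S,S)\otimes_{\ATP}\kk\to\TP(S,S)$ is an isomorphism. Since $\ATP$ is a graded local ring whose maximal graded ideal $\mf{m}$ lies entirely in strictly positive degrees, and $\defTP(S,S)$ is finite-dimensional in each graded component, the ideal $\defTP(S,S)\cdot\mf{m}$ is contained in the graded Jacobson radical of $\defTP(S,S)$. Standard graded idempotent lifting along this ideal then gives a bijection between complete sets of primitive orthogonal idempotents in $\TP(S,S)$ and in $\defTP(S,S)$.

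Taking direct sums of Stendhal pairs and passing to the idempotent completion, this bijection of primitive idempotents gives a bijection between indecomposable summands on the two sides and hence an isomorphism of $\ZZ[q^{\pm 1}]$-modules $K_0(\defTP)\to K_0(\TP)$. To check $\QGdotZ$-equivariance, note that the graded 2-representation of $\catQGdot$ on $\TP$ is by construction the specialization of the one on $\defTP$ along $\ATP\to\kk$, so $F$ is a morphism of 2-representations and the induced map on $K_0$ intertwines the actions of $K_0(\catQGdot)\cong\QGdotZ$. Combining with the isomorphism $K_0(\TP)\cong\VZqulam$ from \cite{Web16} yields the final claim.

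The main obstacle will be the idempotent lifting step, where we must verify that the graded local structure of $\ATP$ together with the finite-dimensionality of each graded piece of $\defTP(S,S)$ is sufficient to lift primitive idempotents degree by degree without passing to a formal completion. If this turns out to be delicate to formulate directly, an alternative is to carry out the lifting in $\defTP(S,S)/\defTP(S,S)\cdot\mf{m}^N$ for each $N$ and use that $\mf{m}^N$ acts trivially on any finite-dimensional graded subspace for $N$ sufficiently large, so no completion is actually required.
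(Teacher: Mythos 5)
Your proof is correct and takes essentially the same route as the paper's, namely lifting idempotents along the specialization map on endomorphism rings of Stendhal pairs. The one genuine issue is in your primary argument: $\defTP(S,S)$ is the degree-zero subalgebra of $\defTPstar(S,S)$, and since $\ATP$ is concentrated in non-negative degree with $\mf{m}=(\ATP)_+$ strictly positive, the notation "$\defTP(S,S)\cdot\mf{m}$'' does not describe the kernel of $\defTP(S,S)\to\TP(S,S)$; the kernel $K$ is the degree-zero part of $\defTPstar(S,S)\cdot(\ATP)_+$, which picks up contributions from \emph{negative}-degree morphisms multiplied by positive-degree elements of $\ATP$. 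So the claim that $K$ lands in a "graded Jacobson radical'' is not justified as stated. However, you flag exactly this point, and your fallback coincides with the paper's argument: $K^N$ lies in the degree-zero part of $\defTPstar(S,S)\cdot(\ATP)_+^N$, which vanishes for $N$ large because Lemma~\ref{lemma:SpanningSet} bounds morphism degrees below; hence $K$ is nilpotent and idempotents lift uniquely. Two small remarks: Proposition~\ref{Prop:Flatness} is not needed for this corollary (the identification $\TPstar(S,S)=\defTPstar(S,S)\otimes_{\ATP}\kk$ holds by the definition of $\TPstar$, independently of freeness), and the $\QGdotZ$-equivariance step, which you spell out, is left implicit in the paper.
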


\begin{proof}
    It suffices to show that the functor realises a bijection between isomorphism classes of indecomposables. Since every indecomposable object is a direct summand of (a grading shift of) a Stendhal pair, it suffices to show that if $S$ is a Stendhal pair then idempotents (and isomorphisms between them) lift uniquely under the algebra homomorphism
    \begin{equation}\label{eq:SpecialisationHom}
        \defTP(S,S)\longrightarrow \TP(S,S)
    \end{equation}
    
    The kernel $K$ of this map is the degree zero component of $\defTPstar(S,S)\cdot(\ATP)_+$, where $(\ATP)_+$ is the augmentation ideal of $\ATP$. So $K^N$ is contained in the degree zero component of $\defTPstar(S,S)\cdot(\ATP)_+^N$. Degrees of morphisms between Stendhal pairs are bounded below (this follows from Lemma~\ref{lemma:SpanningSet} which is independent of this proposition) so $K$ is nilpotent ideal of $\defTP(S,S)$. Since idempotents can be lifted uniquely modulo nilpotent ideals, the claim holds.
\end{proof}

If $\ulambda=(\lambda)$ consists of a single dominant weight then we write $\defTPlam$ for $\defTP$. Recall the deformed cyclotomic quotient $\defcyclolam$ from \S\ref{subsec:cyclo}.

\begin{corollary}\label{Cor:EquivDefCyclo}
    If $\lambda\in X^+$ is a dominant weight then there is a graded 2-natural isomorphism
    \begin{equation}
        \xi:\defcyclolam\longrightarrow \defTPlam
    \end{equation}
    
    \noindent sending $1_{\lambda}$ to the trivial Stendhal pair $\emptyset$.
\end{corollary}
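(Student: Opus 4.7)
The plan is to construct $\xi$ via the universal property of the deformed cyclotomic quotient, and then show it is an equivalence by combining essential surjectivity with an $\bA_\lambda$-rank comparison modelled on Proposition~\ref{Prop:Flatness}.

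For existence of $\xi$: $\defTPlam$ is a graded 2-representation of $\catQGdot$, and $\emptyset \in \defTPlam(\lambda)$. I would verify that the defining relation of $\defcyclolam$---vanishing of any 2-morphism whose rightmost strand is an uninterrupted upward black strand adjacent to the region of weight $\lambda$---holds at $\emptyset$ in $\defTPlam$. Under the $\catQGdot$-action such a strand becomes a black upward strand parallel to, and immediately to the left of, the red string of $\emptyset$; applying \eqref{eq:UpAndRed} rewrites this parallel pair as a doubly-crossed pair, and at the middle slice of the crossing the black strand lies to the right of the (only) red string, so the resulting Stendhal diagram is violated and hence zero. By the universal property we obtain a graded 2-natural transformation $\xi : \defcyclolam \to \defTPlam$ with $\xi(1_\lambda) = \emptyset$, $\xi(\E_\ui 1_\lambda) = \E_\ui \emptyset$, and $\xi$ on 2-morphisms given by placement to the left of the red string.

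Essential surjectivity is then immediate: any Stendhal pair with non-zero identity in $\defTPlam$ must satisfy $\kappa(1)=0$ (no black strings to the right of the unique red string), for otherwise its identity 2-morphism would be violated. Hence every non-zero object has the form $\E_\ui \emptyset = \xi(\E_\ui 1_\lambda)$, so $\xi$ is essentially surjective after passing to idempotent completions.

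For full faithfulness I would compare $\bA_\lambda$-ranks of the starred hom spaces. By Proposition~\ref{Prop:Flatness}, $\defTPlamstar(\E_\ui\emptyset,\E_\uj\emptyset)$ is a free $\bA_\lambda$-module whose rank equals $\dim_\KK \genTP(\E_\ui\emptyset,\E_\uj\emptyset)$; via Theorem~\ref{Thm:GenTPEquiv} and the isometric embedding $\VZ{\lambda}\hookrightarrow\VZlamtild$ exhibited in the proof of Proposition~\ref{Prop:Flatness}, this rank equals a coefficient of the Shapovalov form on $\VZ{\lambda}$. The category $\defcyclostar{\lambda}$ is enriched over $\bA_\lambda$-modules via bubbles in the weight-$\lambda$ region, whose action factors through $a_1:\Pi\to\bA_\lambda$ by Proposition~\ref{prop:SymFuncTPC}; the same generic-point argument of \S\ref{sec:unfurling}, adapted to the cyclotomic quotient, yields $\bA_\lambda$-freeness of the matching rank. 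Finally $\xi$ is $\bA_\lambda$-linear and surjective on hom spaces: any Stendhal diagram in $\defTPlam$ reduces, via \eqref{eq:DotsAndRed}, \eqref{eq:UpAndRed}, \eqref{eq:DeformedRels1}--\eqref{eq:DeformedRels3}, and Proposition~\ref{prop:SymFuncTPC}, to an $\bA_\lambda$-linear combination of diagrams without red-black crossings, and such diagrams lie in the image of $\xi$. A surjection between free $\bA_\lambda$-modules of equal rank is an isomorphism, so $\xi$ is fully faithful. The principal obstacle is verifying the $\bA_\lambda$-freeness and rank of the $\defcyclostar{\lambda}$-hom spaces, which requires porting the unfurling analysis of \S\ref{sec:unfurling} to the cyclotomic-quotient side.
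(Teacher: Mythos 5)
Your construction of $\xi$ via Rouquier's universal property and the verification that $\emptyset$ is a highest-weight object (any upward black strand slides to the right of the sole red strand by \eqref{eq:UpAndRed}, producing a violated diagram) matches the paper, as does the essential surjectivity observation. The divergence---and the gap---is in the full-faithfulness step, and you correctly flag it yourself: your strategy of comparing $\bA_\lambda$-ranks across all hom spaces requires an independent proof that morphism spaces in $\defcyclolamstar$ are free over $\bA_\lambda$, which would mean re-running the unfurling argument on the cyclotomic quotient. Without the corollary already in hand, that freeness is not available, so the argument as written is incomplete (and risks being circular, since freeness of $\defcyclolamstar$-homs is equivalent to freeness of $\defTPlamstar$-homs once the corollary is known).

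What you missed is that Rouquier's universality gives more than existence and essential surjectivity: the 2-natural transformation from the cyclotomic quotient is an equivalence \emph{if and only if} the induced map on the endomorphism algebra of the highest-weight object, $\defcyclolamstar(1_\lambda,1_\lambda)\to\defTPlamstar(\emptyset,\emptyset)$, is an isomorphism. This collapses the entire full-faithfulness question to a single hom space. Both endomorphism algebras are cyclic $\bA_\lambda$-modules generated by the identity (for $\defcyclolamstar$ this is because only bubbles survive in weight $\lambda$ and clockwise real bubbles vanish, so $\Pi$ acts through $\bA_\lambda$; for $\defTPlamstar$ it is the spanning set of Lemma~\ref{lemma:SpanningSet} applied to the empty Stendhal pair), $\xi$ is $\bA_\lambda$-linear by the bubble slides of Proposition~\ref{prop:SymFuncTPC}, and freeness from Proposition~\ref{Prop:Flatness} pins down $\defTPlamstar(\emptyset,\emptyset)\cong\bA_\lambda$. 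This sidesteps any unfurling analysis of the cyclotomic quotient entirely. You should replace your rank-comparison argument with this single-hom-space reduction.
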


\begin{proof}
    Horizontal composition induces a graded right action of $\catQGdotstar(1_{\lambda},1_{\lambda})$ on morphism spaces in $\defcyclolamstar$. Since an upward string at the far right is zero in $\defcyclolamstar$, real clockwise bubbles act as zero. So the bubble isomorphism $b_{\lambda}$ from \S\ref{subsec:BubblesInU} induces a right action of $\Pi$ such that elementary symmetric functions $e_{i,r}$ with $r>\langle i,\lambda\rangle$ act as zero. These generate the kernel of the projection $\Pi\to \bA_{\lambda}$, so the action of $\Pi$ factors through $\bA_{\lambda}$.
    
    Let $\emptyset\in\ob(\defTP)$ denote the trivial Stendhal pair with no black strings. By relation \eqref{eq:UpAndRed}, $\E_i\emptyset=0$ for any $i\in I$, so $\emptyset$ is a highest weight object of $\defTPlam$ of weight $\lambda$. Moreover, $\emptyset$ generates $\defTPlam$ under the action of $\catQGdot$ so by \cite[Theorem~4.25]{Rou12} there is an essentially surjective 2-natural transformation
    \begin{equation}
        \xi:\defcyclolam\longrightarrow \defTPlam
    \end{equation}
    
    \noindent sending $1_{\lambda}$ to $\emptyset$, and $\xi$ is an isomorphism if and only if the induced map
    \begin{equation}
        \defcyclolamstar(1_{\lambda},1_{\lambda})\longrightarrow \defTPlamstar(\emptyset,\emptyset)
    \end{equation}
    
    \noindent is an isomorphism. But the bubble slides \eqref{eq:SlidesTPGenFunc1}-\eqref{eq:SlidesTPGenFunc3} in $\defTPlam$ imply that $\xi$ respects the $\bA_{\lambda}$-actions and both $\defcyclolamstar(1_{\lambda},1_{\lambda})$ and $\defTPlamstar(\emptyset,\emptyset)$ are generated by the identity morphism under $\bA_{\lambda}$, so in particular $\defTPlamstar(\emptyset,\emptyset)\cong\bA_{\lambda}$. The claim follows.
\end{proof}

\section{Trace of \texorpdfstring{$\mc{X}$\textsuperscript{\underline{$\lambda$},$*$}}{XLambda*}}\label{sec:trace}

In this section we begin discussing the trace of $\TPstar$ and $\defTPstar$. After recalling the necessary background, we show that $\TrTPstar$ and $\TrdefTPstar$ are spanned by the classes of diagrams with no crossings between red and black strings. We use this to determine the trace of the unstarred categories $\TP$ and $\defTP$ and find an upper bound for $\dim_{\kk} \TrTPstar$.

\subsection{Trace decategorification}

First we recall the necessary background on trace decategorification and the relevant results from \cite{BHLW17} on the trace of the categorified quantum group and its cyclotomic quotients.

\subsubsection{Definition}

The \emph{trace} or zeroth Hochschild homology of a $\kk$-linear category $\mc{C}$, denoted $\Tr(\mc{C})$, is the $\kk$-vector space defined by:

\begin{equation*}
	\Tr(\mc{C})= \bigg( \bigoplus_{x\in \ob(\mathcal{C})} \mc{C}(x,x) \bigg) \big/\operatorname{span}_{\Bbbk}\{fg-gf\}, 
\end{equation*}

\noindent where the span is over all $f\in\mc{C}(x,y)$ and $g\in\mc{C}(y,x)$ for $x,y\in \ob(\mathcal{C})$. If $f\in\mc{C}(x,x)$ then write $[f]$ for the class of $f$ in $\Tr(\mc{C})$. In diagrammatic categories, applying the trace relation $fg=gf$ can be thought of as cutting a diagram at a horizontal line and swapping the top and bottom parts of the diagram.

%Trace decategorification has a convenient interpretation for diagrammatic categories: we consider our string diagrams to be drawn on an annulus instead of a plane. The annulus allows us to capture the trace relation $fg=gf$ diagrammatically by letting diagrams slide around the annulus to change their composition order, as illustrated below.

%\begin{equation}
%	\begin{tikzpicture}[baseline=(current bounding box.center),rounded corners,very thick]
%	\filldraw[blue,dashed,fill=blue!5!white] (0.5,1) circle [radius=40pt];
%	\filldraw[fill,blue,fill=white] (0.5,1) circle [radius=4pt];
%	\draw (0,0.3) to (0,0.5);
%	\draw (-0.25,0.5) rectangle (0.25,0.9);
%	\draw (0,0.9) to (0,1.0);
%	\draw (-0.25,1.0) rectangle (0.25,1.4);
%	\draw (0,1.7) arc(180:0:5mm);
%	\draw (1,1.7) to (1,0.3);
%	\draw (1,0.3) arc(360:180:5mm);
%	\node at (0,0.7){\small $f$};
%	\node at (0,1.2){\small $g$};
%	\end{tikzpicture}\hspace{6pt}
%	=
%	\hspace{6pt}
%	\begin{tikzpicture}[baseline=(current bounding box.center),rounded corners, very thick]
%	\filldraw[blue,dashed,fill=blue!5!white] (0.5,1) circle [radius=40pt];
%	\filldraw[fill,blue,fill=white] (0.5,1) circle [radius=4pt];
%	\draw (0,0.3) to (0,0.5);
%	\draw (-0.25,0.5) rectangle (0.25,0.9);
%	\draw (0,0.9) to (0,1.0);
%	\draw (-0.25,1.0) rectangle (0.25,1.4);
%	\draw[-] (0,1.4) to (0,1.7);
%	\draw (0,1.7) arc(180:0:5mm);
%	\draw (1,1.7) to (1,0.3);
%	\draw (1,0.3) arc(360:180:5mm);
%	\node at (0,0.7){\small $g$};
%	\node at (0,1.2){\small $f$};
%	\end{tikzpicture}
%\end{equation}

The trace and the split Grothendieck group of $\mc{C}$ are related by the $\kk$-linear \emph{Chern character map}:
\begin{align}
    \begin{split}
        h_{\mc{C}}: K^{\kk}_0(\mc{C}) &\longrightarrow \Tr(\mc{C}) \\
        [x]&\longrightarrow [1_x].
    \end{split}
\end{align}

\noindent This is injective under relatively weak hypotheses (see \cite[Proposition 2.4]{BHLW17}), but often fails to be surjective. Unlike the split Grothendieck group, the trace is invariant under taking the Karoubi envelope (c.f. \cite[Proposition 3.2]{BHLZ14}).

If $\mc{C}$ is a graded category then grading shift $\langle 1\rangle$ induces an automorphism $q$ on the trace. This gives $\Tr(\mc{C})$ a $\kk[q^{\pm 1}]$-module structure with respect to which $h_{\mc{C}}$ is a homomorphism of $\kk[q^{\pm 1}]$-modules.

If $\mc{C}^*$ is the corresponding starred category then the $q$-action on $\Tr(\mc{C}^*)$ is trivial, but since $\mc{C}^*$ is enriched over graded vector spaces, the trace $\Tr(\mc{C}^*)$ is a graded vector space also. The corresponding Chern character map $h_{\mc{C}^*}$ is a homomorphism of graded vector spaces where $K_0^{\kk}(\mc{C}^*)$ is concentrated in degree zero.

Since the morphism spaces in $\mc{C}^*$ are larger than in $\mc{C}$, we should expect $\Tr(\mc{C}^*)$ to be richer than $\Tr(\mc{C})$. In fact in this paper the Chern character maps $h_{\mc{C}}$ for the unstarred categories are always isomorphisms, so we focus our attention almost entirely on the traces of starred categories $\Tr(\mc{C}^*)$.

If $\mc{C}$ is enriched over (graded) right $A$-modules for some $\kk$-algebra $A$ then $\Tr(\mc{C})$ is a (graded) right $A$-module. Moreover, the trace commutes with base change; if $\mc{C}$ is a $\kk$-linear category and $A$ is a $\kk$-algebra then
\begin{equation}
    \Tr(\mc{C}\otimes A)\cong\Tr(\mc{C})\otimes A
\end{equation}
\noindent as right $A$-modules.

\subsubsection{Trace decategorification and 2-representations}

%We can extend the definition of trace to 2-categories. If $\mc{C}$ is a 2-category (so for every pair $x,y\in \ob(\mc{C})$ we have a category $\Hom_{\mc{C}}(x,y)$ of morphisms), then define $\Tr(\mc{C})$ to be the category with \begin{itemize} \item objects $\ob(\Tr(\mc{C})) = \ob(\mc{C})$ \item for $x,y \in \ob(\Tr(\mc{C}))$, morphisms $\Hom_{\Tr(\mc{C})}(x,y) = \Tr(\Hom_{\mc{C}}(x,y))$. \end{itemize}

The trace $\Tr(\catQGstar)\cong\Tr(\catQGdotstar)$ of the (starred) categorified quantum group is a locally unital graded $\kk$-algebra:
\begin{equation}
    \Tr(\catQGstar)=\bigoplus_{\mu,\nu\in X}1_{\nu}\Tr(\catQGstar(\mu,\nu))1_{\mu}
\end{equation}

\noindent with multiplication given by horizontal composition. A (graded) 2-representation of $\catQGdotstar$ on $\mc{M}=\bigoplus_{\mu\in X} \mc{M}(\mu)$ induces a locally unital (graded) $\Tr(\catQGstar)$-module structure on
\begin{equation}
    \Tr(\mc{M})=\bigoplus_{\mu\in X}1_{\mu}\Tr(\mc{M}(\mu)).
\end{equation}

A 2-natural transformation $\eta$ between 2-representations on $\mc{M}$ and $\mc{N}$ induces a homomorphism of locally unital $\Tr(\catQGstar)$-modules $\Tr(\eta):\Tr(\mc{M})\to \Tr(\mc{N})$. If $\eta$ is a 2-natural isomorphism then $\Tr(\eta)$ is an isomophism. If $\mc{M}$, $\mc{N}$, and $\eta$ are graded then $\Tr(\eta)$ respects this structure.

In particular, $\Tr(\defcyclolamstar)$, $\Tr(\cyclolamstar)$, $\TrdefTPstar$, and $\TrTPstar$ are locally unital graded $\Tr(\catQGstar)$-modules. The action is given by placing a diagram on the left if weights match, and taking the class in the trace. Moreover, since morphism spaces in $\defTPstar$ are enriched over graded right $\ATP$-modules, $\TrdefTPstar$ has the structure of a graded $(\Tr(\catQGstar),\ATP)$-bimodule.

%If $\catQG$ is a graded 2-category and $\eta$ is a graded 2-natural transformation between graded 2-representations $\mc{M}$ and $\mc{N}$ then $\Tr(\catQG)$ is a locally unital $\kk[q^{\pm 1}]$-algebra and the modules $\Tr(\mc{M})$ and $\Tr(\mc{N})$ and homomorphism $\Tr(\eta)$ respect this structure. Moreover, $\Tr(\catQG^*)$ is a $\ZZ$-graded locally unital algebra, $\Tr(\mc{M}^*)$ and $\Tr(\mc{N}^*)$ are graded $\Tr(\catQG^*)$-modules, and $\Tr(\eta^*)$ is a homomorphism of graded $\Tr(\catQG^*)$-modules.

\subsubsection{Results of \cite{BHLW17}}

Recall from \S\ref{Subsec:CurrentDef} that the current algebra $\currdot$ is a locally unital graded $\kk$-algebras and for $\lambda\in X^+$ it has (locally unital) graded modules $\WW(\lambda)$ and $W(\lambda)$ called global and local Weyl modules respectively. Recall, also for $\lambda\in X^+$, the deformed and undeformed cyclotomic quotients $\defcyclolam$ and $\cyclolam$ - graded 2-representations of $\catQGdot$. Finally recall the isomorphisms $b_{\mu}$ ($\mu\in X$) between symmetric functions and bubbles from \S\ref{subsec:BubblesInU}.

The following theorem comprises \cite[Theorems~7.4, 7.5, and 8.4]{BHLW17}:

\begin{theorem}\label{Thm:TraceCatQG}
    There is an isomorphism of locally unital graded $\kk$-algebras
    \begin{equation}
        \rho: \currdot \longrightarrow \Tr(\catQGstar)
    \end{equation}
    
    \noindent sending
    \begin{equation*}\label{Thm: TraceofQuantumGroup}
        (e_i \otimes t^r)1_\mu \longmapsto \left[
    \begin{tikzpicture}[very thick, baseline=(current bounding box.center)]
        \draw[->] (0,0)--(0,1);
        \node at (0,-.2){$i$};
        \node at (.5,.9){$\mu$};
        \draw[fill] (0,.5) circle[radius=2pt];
        \node at (.3,.5){$r$};
    \end{tikzpicture}\right],\quad
    (\xi_i\otimes t^r)1_{\mu}\longrightarrow[b_{\mu}(p_{i,r})],\quad
    (f_i\otimes t^r)1_{\mu}\longmapsto \left[ \begin{tikzpicture}[very thick, baseline=(current bounding box.center)]
        \draw[<-] (0,0)--(0,1);
        \node at (0,-.2){$i$};
        \node at (.5,.9){$\mu$};
        \draw[fill] (0,.5) circle[radius=2pt];
        \node at (.3,.5){$r$};
    \end{tikzpicture} \right]
%    \end{split}
    \end{equation*}
    
    \noindent for any $\mu\in X$, $i\in I$, and $r\in \NN$. Moreover, for any $\lambda\in X^+$ there are isomorphisms of graded modules
    \begin{equation}
        \WW(\lambda)\longrightarrow \Tr(\defcyclolamstar(\g)) \quad \text{and} \quad W(\lambda)\longrightarrow \Tr(\cyclolamstar)
    \end{equation}
    
    \noindent intertwining $\rho$ and sending the distinguished generator $w_{\lambda}$ to the class $[1_{\lambda}]$ of the empty diagram.
\end{theorem}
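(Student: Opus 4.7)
The plan is to define $\rho$ on generators by the prescribed formulas and then verify that the defining relations of $\currdot$ hold in $\Tr(\catQGstar)$. The most substantive relation is $[e_i\otimes t^r, f_j\otimes t^s]1_\mu = \delta_{ij}(\xi_i\otimes t^{r+s})1_\mu$: applying the extended $\mathfrak{sl}_2$-relation \eqref{eq:Extendedsl2} to adjacent dotted up/down $i$-strings produces a sum of cap/cup terms that, after invoking cyclicity of the trace, collapses to a single bubble class; this class corresponds under $b_\mu$ to $p_{i,r+s}$, which is $\rho((\xi_i\otimes t^{r+s})1_\mu)$ by definition. The commutation of $\xi_i\otimes t^r$ with the $e_j\otimes t^s$ and $f_j\otimes t^s$ translates directly into the $p_j(x)$ bubble-slide formula of Lemma~\ref{Lem:SlidesCatQG}, while the Serre and other quadratic relations on the $(e_i\otimes t^r)$ (respectively $(f_i\otimes t^r)$) come from the KLR relations \eqref{eq:nilHecke-1}--\eqref{eq:black-bigon} restricted to upward (respectively downward) strings.

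For surjectivity of $\rho$, the strategy is to reduce any closed diagram in $\catQGstar(\mu,\mu)$, modulo the trace relation, to a $\kk$-linear combination of \emph{straightened} diagrams: a single up- or down-oriented string decorated with dots, or a product of bubbles sitting in the $1_\mu\Tr(\catQGstar)1_\mu$ component. One first exploits biadjunction and cyclicity to bring all caps and cups together into bubbles; next, the KLR relations eliminate crossings among the remaining straight strings; finally, the infinite Grassmannian equation \eqref{eq:InfGrass} removes fake bubbles in favor of real ones. The resulting normal-form classes are precisely the images under $\rho$ of PBW-type monomials in the generators of $\currdot$.

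The hard part is injectivity, which requires a graded dimension count. One fixes a PBW-style basis of $1_\nu \currdot 1_\mu$ using an ordering on the generators $(e_i \otimes t^r)$, $(\xi_i \otimes t^r)$, $(f_i\otimes t^r)$, and checks that their images form a linearly independent set in the corresponding trace piece; non-degeneracy of the graphical calculus in $\catQGdot$, which is known in finite ADE type via the decategorification of cyclotomic quotients, underpins the required independence. For the Weyl module statements, the maps are defined by $w_\lambda \mapsto [1_\lambda]$ and extended through the $\currdot$-module structure on the trace of the cyclotomic quotient; the Weyl relation $(f_i \otimes 1)^{\langle i,\lambda\rangle+1}\cdot w_\lambda = 0$ is forced by the deformed cyclotomic relation \eqref{eq:defcyclorel} (respectively \eqref{eq:UndefCycloRel} in the undeformed case), and the additional nilpotency $\mathfrak{h}t[t]\cdot w_\lambda = 0$ in the local case is enforced by the vanishing of positive-degree bubbles at the far right in $\cyclolam$. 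Bijectivity of these maps then follows by matching the known Hilbert series of $\WW(\lambda)$ and $W(\lambda)$ against the dimension of the trace obtained from the same straightening-and-cyclicity argument, now carried out in the presence of the cyclotomic relations, which cut down the spanning set of straightened diagrams to exactly the expected size.
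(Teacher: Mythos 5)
The paper does not prove Theorem~\ref{Thm:TraceCatQG}; immediately above the statement it says ``The following theorem comprises [BHLW17, Theorems~7.4, 7.5, and 8.4]'' and cites Beliakova--Habiro--Lauda--Webster. So there is no in-paper proof to compare against, and what you have written is an independent reconstruction of the argument from that reference.

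Your reconstruction has the right overall shape --- define $\rho$ on generators, check relations via the extended $\mathfrak{sl}_2$-relation \eqref{eq:Extendedsl2} and the bubble-slide formulas, establish surjectivity by reducing closed diagrams to straightened normal forms using cyclicity and the KLR/Grassmannian relations, and then handle injectivity and the Weyl-module statements together. The weak point is injectivity. You invoke ``non-degeneracy of the graphical calculus'' to get linear independence of the images of PBW monomials, but non-degeneracy controls the size of the morphism spaces $\catQGdot(x,y)$, not of the \emph{quotient} $\Tr(\catQGstar)$, which is $\bigoplus_x\catQGstar(x,x)$ modulo the span of all $fg-gf$. A priori the trace relation could collapse your spanning set further, and nothing you have said rules that out. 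The argument in [BHLW17] circumvents this by first proving the Weyl-module statements (the surjections $\WW(\lambda)\onto\Tr(\defcyclolamstar)$ and $W(\lambda)\onto\Tr(\cyclolamstar)$ are obtained exactly as you describe, using \eqref{eq:defcyclorel} and \eqref{eq:UndefCycloRel}), establishing that they are isomorphisms by a graded-dimension count against the known characters of local Weyl modules, and then deducing injectivity of $\rho$ from the fact that $\bigoplus_{\lambda\in X^+}W(\lambda)$ is a faithful $\currdot$-module and the actions factor through $\rho$. In short: the Weyl-module isomorphisms are logically \emph{prior} to injectivity of $\rho$, not a corollary of it as your write-up suggests. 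Replacing your non-degeneracy appeal with this faithfulness argument would close the gap.
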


\noindent So $\TrTPstar$ is a graded $\currdot$-module and $\TrdefTPstar$ is a graded $(\currdot,\ATP)$-bimodule.

In \cite[Theorem~8.1]{BHLW17} the authors also showed that the Chern character maps $h_{\catQGdot}$, $h_{\defcyclolam}$, and $h_{\cyclolam}$ for the unstarred categories are isomorphisms, thereby determining the traces of these categories. Our proof that $h_{\defTPstar}$ and $h_{\TPstar}$ are isomorphisms follows theirs (see Proposition~\ref{prop:DecatUnstarred}).
 
%\noindent In particular this implies that a graded 2-representation $\mc{M}$ of $\catQG$ induces a graded representation $\Tr(\mc{M}^*)$ of the current algebra $\currdot$.

\subsection{Spanning set}\label{subsec:SpanningSet}

In this subsection we show that $\TrdefTPstar$ and $\TrTPstar$ are spanned by diagrams with no crossings between red and black strings and deduce that the Chern character maps $h_{\TP}$ and $h_{\defTP}$ for the unstarred categories are isomorphisms. 

We begin by constructing a spanning set for morphism spaces in $\defTPstar$ and $\TPstar$ following \cite[\S3.2]{KL10}. Choosing this set carefully allows us to use an inductive argument in the proof of Lemma~\ref{lemma:SpanningSet}.

Let $C$ denote the set of compositions $\nu=(\nu(1),\ldots,\nu(n))$ of length $n$. Write $\lvert\nu\rvert=\sum \nu(j)$ and let $C(m)$ denote the set of $\nu\in C$ with $\lvert \nu\rvert=m$. Each $C(m)$ is a poset under the reverse dominance order: $\nu\leq\nu'$ if
\begin{equation}
    \sum_{j=1}^k \nu(j)\geq\sum_{j=1}^k \nu'(j)% \text{ for }k\in [1,n-1] \text{ and } \sum_{j=1}^{n} \nu(j)= \sum_{j=1}^{n} \nu'(j).
\end{equation}

\noindent for all $k\in [1,n]$. If $S=(\ui,\kappa)$ is a Stendhal pair then define a composition
\begin{equation}
    \nu_{S}=(\kappa(2)-\kappa(1),\ldots ,\kappa(n+1)-\kappa(n)),
\end{equation}

\noindent so $\nu_S(k)$ is the number of black strings between the red strings labelled by $(k)$ and $(k+1)$. Observe that ``right'' crossings move us down in the partial order and ``left'' crossings move us up:
\begin{equation*}
    \begin{tikzpicture}{very thick}
        \draw[-] (.5,-.5)--(-.5,.5);
        \draw[wei] (-.5,-.5)--(.5,.5);
        \node at (0,-1){``left'' crossing};
    \end{tikzpicture}
    \qquad\qquad
    \begin{tikzpicture}{very thick}
        \draw[-] (-.5,-.5)--(.5,.5);
        \draw[wei] (.5,-.5)--(-.5,.5);
        \node at (0,-1){``right'' crossing};
    \end{tikzpicture}
\end{equation*}

Take Stendhal pairs $S=(\ui,\kappa)$ and $S'=(\ui',\kappa')$. Any Stendhal diagram with $S$ as its bottom and $S'$ as its top induces a matching on the disjoint union $\ui\sqcup\ui'$ by pairing elements of $\pm I$ that are connected by strings. Any such matching either connects an occurrence of $\pm i\in\pm I$ in $\ui$ with one in $\ui'$, or occurrences of $i$ and $-i$ that lie either both in $\ui$ or both in $\ui'$.

For each such matching we fix a diagram $D$ that attaches matched elements. We require that:
\begin{enumerate}
    \item $D$ is minimal (no two strings of any color cross more than once);
    \item $D$ has no closed loops (so no bubbles);
    \item there are no dots on any of the strings of $D$;
    \item\label{item:SpanningSet} on a given red string, all ``right'' crossings with black strings occur below all ``left'' crossings:
\end{enumerate}
    
\noindent Fix a point on each black string of $D$ away from intersections. Let $B_{S,S'}$ denote the union over all matchings of diagrams obtained from $D$ by placing an arbitrary number of dots at the chosen points on $D$.

\begin{lemma}\label{lemma:SpanningSet}
    The set $B_{S,S'}$ generates the morphism space $\TPstar(S,S')$ (resp.\ $\defTPstar(S,S')$) as a $\kk$-vector space (resp.\ $\ATP$-module).
\end{lemma}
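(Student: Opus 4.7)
The plan is to reduce an arbitrary Stendhal diagram $D\in\defTPstar(S,S')$ to a $\kk$-linear (resp.\ $\ATP$-linear) combination of elements of $B_{S,S'}$ by a three-stage normal-form argument, mirroring the Khovanov–Lauda spanning argument in \cite[\S3.2]{KL10}. Throughout, the reduction is driven by induction on the triple
\[
(c(D),\, v(D),\, d(D)),
\]
ordered lexicographically, where $c(D)$ is the total number of crossings, $v(D)$ counts the ``right-above-left'' violations on red strings (pairs of crossings on a common red string that violate condition \ref{item:SpanningSet}), and $d(D)$ counts dots lying away from the chosen reference points on black strings. We treat $\defTPstar$; the undeformed case is recovered by setting $z=0$.

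First, I would eliminate all bubbles. Any closed oriented loop in $D$ can be resolved using the infinite Grassmannian relation \eqref{eq:InfGrass} and the bubble/symmetric function isomorphism $b_\mu$ of \S3.2 to rewrite the bubble as a polynomial in standard bubbles placed in a chosen region. Using Lemma~\ref{Lem:SlidesCatQG} and Proposition~\ref{prop:SymFuncTPC} (the bubble slides through black and red strings) these bubbles may be pushed to the leftmost region of the diagram, where they become scalars in $\kk$ (or, after passage through the red strings, elements of $\ATP$ multiplying another diagram). All terms produced by the slide formulas either strictly decrease $c(D)$ or keep it fixed while yielding bubbles in a region strictly further to the left, so this terminates.

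Next, I would apply the standard KLR reduction \cite[\S3.2]{KL10} to the black-string part of $D$. Using the KLR relations \eqref{eq:nilHecke-1}, \eqref{eq:black-bigon} and \eqref{eq:Extendedsl2}, every black-black bigon is rewritten as a sum of diagrams with one fewer crossing (at the cost of extra dots or a bubble, handled by Stage~1), so we may assume no two black strings cross twice. Using the red-black relations \eqref{eq:UpAndRed} and the deformed analogues \eqref{eq:DeformedRels1}-\eqref{eq:DeformedRels3} (together with their vertical reflections), every red-black bigon is likewise rewritten as a combination of terms with strictly smaller $c(D)$, plus error terms involving an element of $a_k(\Pi)\subset\ATP$; any resulting violated diagram is zero. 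Dots are then moved past all crossings by the dot-slide rule \eqref{eq:DotsAndRed} and the KLR dot-slide \eqref{eq:nilHecke-1}, producing only terms with smaller $c(D)$, so we may assume each black string carries all its dots at the prescribed reference point.

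Finally, I would ensure condition \ref{item:SpanningSet}: on each red string, every right crossing lies below every left crossing. A ``violation'' is a pair consisting of a right crossing above a left crossing on the same red string $R$. Reading $R$ from bottom to top and considering the lowest such violation, the two black strands involved either (i) form a triangle which, by the triangle relations \cite[(4.4a)-(4.4c)]{Web17} plus the KLR relations, may be rewritten to reduce $v(D)$ while possibly increasing $d(D)$ only locally near $R$ (and after Stage~2, such extra dots are slid back to the reference point), or (ii) bound a red-black bigon which is handled by Stage~2, strictly decreasing $c(D)$. Either way the lexicographic triple strictly decreases, so the process terminates at a diagram in $B_{S,S'}$.

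The main obstacle is the interaction between Stages~2 and~3: the triangle-type relations used to remove ``right above left'' violations produce correction terms whose crossings, dots, and induced bubbles must all be reabsorbed without increasing the complexity measure. Fortunately every correction term either has strictly fewer crossings (so $c$ drops), or is violated (so is zero), or yields a polynomial in dots at the reference point times a bubble that Stage~1 pushes to the left as a scalar in $\kk$ or $\ATP$, so the induction closes.
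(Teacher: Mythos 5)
Your proposal elaborates the paper's one-line citation to \cite[\S3.2]{KL10} into a three-stage normal-form argument, and Stages~2 and~3 are a reasonable (if informal) account of what that citation is doing, suitably adapted via the new red-black relations. However, Stage~1 contains a genuine error in both the direction and the termination mechanism for eliminating closed loops.

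In the conventions this paper fixes (diagrams read right to left, following \cite{BHLW16}), the \emph{rightmost} region of any Stendhal diagram is labelled $0$ and lies to the right of all red strings, and the defining relations of $\TPaux$ set to zero any ``violated'' diagram, i.e.\ one having a black string to the right of all red strings at some horizontal slice. A closed black loop in that region is therefore killed outright. The correct Stage~1 is thus to slide bubbles to the \emph{right}: past black strands using Lemma~\ref{Lem:SlidesCatQG}, and past red strands using Proposition~\ref{prop:SymFuncTPC} (shedding factors $a_k(g_s')\in\ATP$, as your parenthetical remark correctly anticipates), until the surviving loop terms lie in the rightmost region and vanish by the violated-diagram relation. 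You instead push bubbles to the \emph{leftmost} region and claim they ``become scalars in $\kk$'' there. That region carries the weight $\mu$ determined by the object, and a positive-degree bubble there is a nontrivial element of $\catQGdotstar(1_\mu,1_\mu)\cong\Pi$, not a scalar; no relation in $\TPaux$, $\TP$, or $\defTP$ kills it. So as written Stage~1 does not eliminate closed loops, and the argument would only produce a spanning set still containing bubbled diagrams, contrary to the requirement on $B_{S,S'}$. (The slip may come from importing Webster's left-to-right reading of diagrams, which this paper explicitly does not use.) With the direction and endpoint corrected, the rest of your induction goes through along the lines the paper intends.
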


\begin{proof}
    This follows from the spanning set argument in \cite[\S3.2]{KL10} together with the new bubble slides from Proposition~\ref{prop:SymFuncTPC}.
\end{proof}

\begin{lemma}\label{lem:TraceSpanning}
    The trace $\TrTPstar$ (resp. $\TrdefTPstar$) is generated as a $\kk$-vector space (resp. $\ATP$-module) by the classes of Stendhal diagrams with no crossings between red and black strings. 
\end{lemma}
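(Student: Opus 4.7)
By Lemma~\ref{lemma:SpanningSet}, the trace $\TrdefTPstar$ is generated as an $\ATP$-module by classes $[D]$ with $D \in B_{S,S}$ as $S$ ranges over Stendhal pairs, and similarly $\TrTPstar$ is generated as a $\kk$-vector space by such classes. My plan is to argue by induction on the number of red-black crossings of $D$ that $[D]$ lies in the span of classes of diagrams with no such crossings; the base case is trivial. The guiding intuition is topological: interpreting the trace as placing the diagram on a cylinder obtained by identifying the top and bottom, the red strings become vertical circles and the bottom-to-top black strings chain together (through the matching forced by the fact that $D$ is an endomorphism) into closed loops. Each such black loop returns to its starting horizontal position and therefore has algebraic intersection zero with every red circle, so every black loop is null-homotopic in the cylinder minus the red circles. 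This is the topological shadow of the algebraic reduction to be carried out.

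To realise this reduction, I would isolate a red-black crossing in $D$ and combine trace cyclicity $[fg]=[gf]$ with the local Stendhal relations to produce a diagram with strictly fewer red-black crossings. The key local moves are: relation~\eqref{eq:UpAndRed}, which straightens up-oriented black strings through red strings at no cost; and relations~\eqref{eq:UndeformedTPRels1}--\eqref{eq:UndeformedTPRels2} in $\TPstar$ (resp.\ \eqref{eq:DeformedRels1}--\eqref{eq:DeformedRels3} in $\defTPstar$), which straighten down-oriented strings at the cost of dots, lower-order diagrams, and in the deformed case additional bubble corrections. Cyclicity is used to bring a crossing into a position where an appropriate local relation applies, for example by moving the topmost red-black crossing to a location where the relevant black and red strings can be bent back through one another.

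The main obstacle will be the careful bookkeeping of correction terms, particularly in the deformed setting. Bubble corrections produced by \eqref{eq:DeformedRels1} are transported past any remaining red strings using Proposition~\ref{prop:SymFuncTPC}, and the resulting symmetric polynomials in the indeterminates $Z$ are precisely absorbed by the right $\ATP$-action on $\TrdefTPstar$. One must verify that after gathering all error terms each local move yields a sum of diagrams with strictly fewer red-black crossings together with $\ATP$-multiples of classes to which the inductive hypothesis applies. The undeformed statement for $\TrTPstar$ then follows either by running the same argument (with the deformed corrections vanishing after setting all $z \in Z$ to zero) or by tensoring the deformed statement over $\ATP$ with $\kk$.
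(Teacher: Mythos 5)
Your reduction to classes $[D]$ with $D \in B_{S,S}$ via Lemma~\ref{lemma:SpanningSet} is the right first step, and cyclicity together with the spanning lemma are indeed the correct tools. However, your proposed inductive measure, the number of red-black crossings, is a genuine gap: it does not obviously decrease. Trace cyclicity $[D]=[D_2D_1]$ preserves crossing counts, and when $D_2D_1$ is re-expanded in the spanning set $B_{S',S'}$ for an intermediate Stendhal pair $S'$, the crossing counts of the resulting diagrams are not controlled by that of $D$. The alternative you sketch, explicitly cancelling crossings in pairs via a cascade of local moves, runs straight into the correction-term bookkeeping you yourself identify as the main obstacle, and in particular requires producing a ``bigon'' between the same red and the same black string, which need not be available after cyclicity alone.

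The paper sidesteps all of this by inducting on the reverse-dominance poset $C$ of compositions and by exploiting a structural feature of $B_{S,S}$ that your proposal never invokes: condition~(\ref{item:SpanningSet}), which requires that on each red string all ``right'' crossings occur below all ``left'' crossings. Since right crossings move strictly down in $C$ (and $\kappa(1)=0$ throughout any non-violated diagram), any $D\in B_{S,S}$ with a red-black crossing factors as $D=D_1\cdot D_2$ through a Stendhal pair $S'$ with $\nu_{S'}<\nu_S$. Cyclicity gives $[D]=[D_2\cdot D_1]$, which by Lemma~\ref{lemma:SpanningSet} lies in the span of $B_{S',S'}$; since $\nu_{S'}$ is intrinsic to the object $S'$ rather than to any particular diagram, induction on $C$ closes the argument with no local relations applied by hand and no deformed bubble corrections to track. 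The key ideas you are missing are condition~(\ref{item:SpanningSet}), which supplies the factorization, and the switch from a diagram-level measure (crossing count) to an object-level measure (the composition $\nu_S$).
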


\begin{proof}
    We show the statement for the undeformed category $\TPstar$; the argument for $\defTPstar$ is identical. It suffices to show that $\TrTPstar$ is generated as an vector space by the classes of diagrams in $B_{S,S}$ with no red-black crossings, taken over all Stendhal pairs $S$. %We show that if $D\in B_{S,S}$ has a red-black crossing then $[D]=[D']$ in the trace for some $D'\in\defTPstar(S',S')$ with $\nu_{S'}<\nu_S$. The claim follows by induction on $C$.
    
    Take a Stendhal pair $S$ and suppose $D\in B_{S,S}$ has a red-black crossing. Condition~(\ref{item:SpanningSet}) in the definition of $B_{S,S}$ implies that $D$ factors through a Stendhal pair $S'$ with $\nu_{S'}<\nu_S$. Let $D=D_1\cdot D_2$ be the corresponding factorization of $D$ and set $D'=D_2\cdot D_1\in\TPstar(S',S')$. Then $[D]=[D']$ in the trace, $D'$ lies in the span of $B_{S',S'}$, and $\nu_{S'}<\nu_S$. The claim follows by induction on $C$.
\end{proof}

Together with the results of \cite{BHLW17} this allows us to determine the structure of the trace of the unstarred categories $\defTP$ and $\TP$.

\begin{corollary}\label{prop:DecatUnstarred}
    The Chern character maps
    \begin{equation}
%        \begin{split}
            h_{\TP}:K_0^{\kk}(\TP) \longrightarrow \TrTP,\hspace{.5in}
            h_{\defTP}:K_0^{\kk}(\defTP) \longrightarrow \TrdefTP
 %       \end{split}
    \end{equation}
    
    \noindent are isomorphisms, so by Corollary~\ref{Cor:DeformedGG} both $\TrTP$ and $\TrdefTP$ are isomorphic to $\VZqulam$ with scalars extended to $\kk$.
    %
    %\begin{equation}
    %    \VZqulam\otimes_{\ZZ[q^{\pm 1}]}\kk[q^{\pm 1}].
    %\end{equation}
\end{corollary}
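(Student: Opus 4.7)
The plan is to prove that $h_{\TP}$ is an isomorphism; the argument for $h_{\defTP}$ is essentially identical. The identification with $\VZqulam\otimes_{\ZZ}\kk$ then follows from Corollary~\ref{Cor:DeformedGG}.

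For injectivity, I would invoke \cite[Proposition~2.4]{BHLW17}, the standard criterion that the Chern character is injective for any idempotent-complete $\kk$-linear category with finite-dimensional endomorphism algebras. Both hypotheses hold for $\TP$: it is Karoubian by construction, and since only degree-zero morphisms survive in the unstarred category, the spanning set of Lemma~\ref{lemma:SpanningSet} implies that each endomorphism space is finite-dimensional.

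For surjectivity, I would appeal to Lemma~\ref{lem:TraceSpanning}, which reduces the problem to showing that every class $[D]\in\TrTP$ of an endomorphism $D\in\TP(S,S)$ with no red-black crossings is a $\kk$-linear combination of classes of identities. Such a $D$ straightens so that the $n$ red strings become vertical lines, and the black portion decomposes as a horizontal juxtaposition of sub-diagrams $D_0 \mid D_1 \mid \cdots \mid D_n$, where each $D_k$ is a $\catQG$-morphism supported on the black strings contained in the corresponding strip. Applying \cite[Theorem~8.1]{BHLW17}, which asserts that $h_{\catQGdot}$ is an isomorphism, strip by strip reduces each $[D_k]$ to a $\kk$-combination of classes of identities in $\catQG$; reassembling then expresses $[D]$ as a $\kk$-combination of classes $[1_{S'}]$ for Stendhal pairs $S'$ (and their indecomposable summands supplied by the Karoubian structure), yielding surjectivity.

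The principal technical obstacle is making the strip-wise reassembly rigorous, since the trace relation $[fg]=[gf]$ permits horizontal cuts that could in principle mix different strips. I would handle this by observing that the assignment $D\mapsto (D_0,\ldots,D_n)$ promotes the no-crossing sub-category of $\TP$ to a suitable restriction of the external tensor product $\catQG^{\otimes(n+1)}$ of $\kk$-linear categories, which is compatible with the trace via the standard identity $\Tr(\mathcal{C}\otimes_{\kk}\mathcal{D})\cong \Tr(\mathcal{C})\otimes_{\kk}\Tr(\mathcal{D})$. For $\defTP$, the same argument applies with \cite[Theorem~8.1]{BHLW17} supplying the trace identification $\Tr(\defcyclolam)\cong K_0^{\kk}(\defcyclolam)$ needed in each strip.
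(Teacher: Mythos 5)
Your injectivity argument and the initial reduction via Lemma~\ref{lem:TraceSpanning} both match the paper. The divergence, and where the argument develops a genuine gap, is the surjectivity step.

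The paper does not invoke \cite[Theorem~8.1]{BHLW17} (the statement that $h_{\catQGdot}$ is an isomorphism); it instead cites \cite[Corollary~6.3]{BHLW17}, a result describing the degree-zero part of the trace of the KL calculus directly as spanned by divided-power idempotents. Your proposal tries to reach the same conclusion by decomposing a no-crossing diagram into strips between red strings and then applying the $\catQGdot$ result strip by strip via $\Tr(\mathcal{C}\otimes_{\kk}\mathcal{D})\cong\Tr(\mathcal{C})\otimes_{\kk}\Tr(\mathcal{D})$. That decomposition does not hold as stated, for three reasons. First, the weight labels of the strips are rigidly coupled by \eqref{LabellingAcrossStrands}: the region weight to the left of the red $(k)$-string is the region weight to its right plus $\lam{k}$, so the strips cannot vary independently, and the no-crossing subcategory is at best a fiber product over the weight lattice, not a tensor product of copies of $\catQG$. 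Second, the morphism spaces within a strip are not free $\catQG$-morphism spaces: the relations \eqref{eq:UpAndRed} and \eqref{eq:UndeformedTPRels1} transfer strings across red strings, so the actual endomorphism algebra of a Stendhal pair in $\TP$ is a proper quotient of the naive product of strip algebras, and the isomorphism $\Tr(\mathcal{C}\otimes\mathcal{D})\cong\Tr(\mathcal{C})\otimes\Tr(\mathcal{D})$ does not pass to this quotient without further argument. Third, the degree-zero constraint is global: for a no-crossing endomorphism $D$ of a Stendhal pair, $\deg D=\sum_k\deg D_k=0$ does not force each $\deg D_k=0$, so each strip need not be a morphism in the \emph{unstarred} $\catQGdot$ and \cite[Theorem~8.1]{BHLW17} cannot be applied to it.

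If you want to salvage the strip-wise idea, the right tool is the paper's citation: \cite[Corollary~6.3]{BHLW17}, which operates on degree-zero KL diagrams globally and yields directly that the trace class of a degree-zero no-crossing Stendhal diagram is a $\kk$-combination of concatenations of divided-power idempotents separated by red strings, without ever needing the strips to be independent or individually degree zero. From there surjectivity follows as in your last sentence, since $\TP$ (resp.\ $\defTP$) is Karoubian.
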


\begin{proof}
    We prove the claim for the undeformed tensor product category $\TP$; the argument for $\defTP$ is identical. The field $\kk$ is perfect since it has characteristic 0, and morphism spaces in $\TP$ are finite-dimensional vector spaces by Lemma~\ref{lemma:SpanningSet} so $h_{\TP}$ is injective by \cite[Proposition 2.4]{BHLW17}. The same argument as Lemma~\ref{lem:TraceSpanning} shows that $\TrTP$ is spanned by classes of Stendhal diagrams of degree zero with no red-black crossings, so by \cite[Corollary~6.3]{BHLW17} $\TrTP$ is spanned by concatenations of idempotents projecting onto divided powers, separated by red strings. Since $\TP$ is idempotent complete, any idempotent lies in the image of $h_{\TP}$ and so it is surjective.
\end{proof}

\subsection{Upper bound on dimension}

In this subsection we show that the dimension of $\TrTPstar$ is bounded above by the dimension of the tensor product $\Wulam$ of local Weyl modules. This will allow us to show that $\TrdefTPstar$ is free over $\ATP$ in Corollary~\ref{Cor:TraceFree} using the upper semi-continuity of dimension.

Recall the cyclotomic quotients from \S\ref{subsec:cyclo}. We wish to relate $\TP$ and the undeformed cyclotomic quotients $\cyclo{\lam{k}}$  and their traces. Consider the product category
\begin{equation}
    \cycloulam=\cyclo{\lam{1}}\times\cdots\times\cyclo{\lam{n}}.
\end{equation}

\noindent Objects (resp. morphisms) in $\cycloulam$ are tuples of objects (resp. morphisms) with composition defined component-wise. The trace of a product category is isomorphic to the product of the traces.

We would like to define a functor sending a tuple of objects in $\cycloulam$ to the obvious horizontal composition in $\TP$ with different components separated by red strings. But in $\TP$ the cyclotomic-type relations \eqref{eq:UpAndRed} and \eqref{eq:UndeformedTPRels1} only allow us to pull black strings through red strings, while in cyclotomic quotients they give zero, so we would need to pass to a filtration (or stratification) of $\TP$ for this functor to be well-defined. In \cite[\S6]{Web17} Webster constructed such a functor $\Strat$. To deal with taking quotients of objects in $\TP$, he defined $\Strat$ as a functor from $\cycloulam$ to the category of representations of $\TP$ - an abelian category in which $\TP$ embeds fully-faithfully. 

To avoid this complication we ignore objects; instead of defining a functor we construct homomorphisms from morphism spaces in $\cycloulamstar$ to quotients of morphism spaces in $\TPstar$. In the trace this gives surjections
\begin{equation}
    \Tr(\cycloulamstar)_{\nu}\longrightarrow \TrTPstar_{\leq \nu}\big/ \TrTPstar_{< \nu},
\end{equation}

%\noindent where the $\Tr(\cycloulamstar)_{\nu}$ (resp. $\TrTPstar_{\leq \nu}$) form a grading (resp. filtration) of $\Tr(\cycloulamstar)$ (resp. $\TrTPstar$) indexed by the poset $C$ of compositions $\nu$ of length $n$ (see \S\ref{subsec:SpanningSet}).

\noindent where the $\Tr(\cycloulamstar)_{\nu}$ and $\TrTPstar_{\leq \nu}$ form a grading and a filtration of $\Tr(\cycloulamstar)$ and $\TrTPstar$ respectively, both indexed by the poset $C$ of compositions $\nu$ of length $n$ (see \S\ref{subsec:SpanningSet}). The upper bound on dimension follows from the trace decategorification of cyclotomic quotients.

%We will avoid this complication by ignoring objects and constructing homomorphisms between morphism spaces and then passing to the trace, but the main idea is the same.

%We consider $\Tr(\cycloulamstar)=\bigoplus_{\nu}\Tr(\cycloulamstar)_{\nu}$ as graded by $C$, the poset of compositions of length n from \S\ref{subsec:SpanningSet}, according to the number of black strings in each component and construct a filtration $\{\TrTPstar_{\leq \nu}\}_{\nu\in C}$ of $\TrTPstar$. We show there is a surjective linear map from $\Tr(\cycloulamstar)_{\nu}$ to the corresponding summand $\TrTPstar_{\nu}$ of the associated graded space for $\TrTPstar$. The upper bound on dimension follows from the trace decategorification of cyclotomic quotients.

\begin{corollary}\label{cor:DimSpecialPoint}
    The dimension of $\TrTPstar$ is bounded above:
    \begin{equation}
        \dim_{\kk}\TrTPstar\leq \dim_{\kk}\Wulam.
    \end{equation}
\end{corollary}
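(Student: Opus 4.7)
The plan is to combine three ingredients: (1) the spanning set of Lemma~\ref{lem:TraceSpanning}, (2) a compatible filtration on $\TrTPstar$ indexed by the poset $C$ of compositions of length $n$, and (3) the trace decategorification of the undeformed cyclotomic quotients from \cite{BHLW17} (the second part of Theorem~\ref{Thm:TraceCatQG}), which identifies $\Tr(\cyclo{\lam{k},*})$ with $W(\lam{k})$ as a graded module.

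First, I would define a filtration of $\TrTPstar$ using the statistic $\nu_S \in C$ assigned to each Stendhal pair $S$ in \S\ref{subsec:SpanningSet}. By Lemma~\ref{lem:TraceSpanning}, the trace is spanned by classes of diagrams with no red-black crossings; any such diagram from $S$ to $S$ requires $\nu_S=\nu_{S}$, so it makes sense to set $\TrTPstar_{\leq\nu}$ to be the span of classes $[D]$ where $D$ is a (not necessarily crossing-free) diagram from some $S$ to $S$ with $\nu_S\leq\nu$. The key fact, already observed in the proof of Lemma~\ref{lem:TraceSpanning}, is that applying a trace relation to a red-black crossing rewrites a class in $\TrTPstar_{\leq\nu_S}$ as a class in $\TrTPstar_{<\nu_S}$. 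This will show that the associated graded piece $\TrTPstar_{\leq\nu}/\TrTPstar_{<\nu}$ is spanned by the classes of crossing-free diagrams with fixed profile $\nu$.

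Next, I would construct a surjection from $\Tr(\cycloulamstar)_\nu$, the degree-$\nu$ piece of $\Tr(\cycloulamstar)= \prod_k \Tr(\cyclo{\lam{k},*})$, onto this associated graded piece. Concretely, a crossing-free diagram with profile $\nu$ in $\TPstar$ splits as a horizontal concatenation of $n$ blocks separated by red strings, where the $k$th block is a diagram in $\catQGdotstar$ that involves only the region to the right of the $(k)$-string. The undeformed tensor-product relations \eqref{eq:UpAndRed} and \eqref{eq:UndeformedTPRels1} applied locally at the right of each red string impose precisely the cyclotomic relation of weight $\lam{k}$ on the corresponding block, \emph{modulo} diagrams whose classes live in lower filtration pieces (because each application of the cyclotomic relation moves a black string across a red string and hence strictly decreases the profile). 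Thus the assignment sending a tuple of morphisms to the corresponding horizontal concatenation descends, in the associated graded, to a well-defined surjective map $\Tr(\cycloulamstar)_\nu\twoheadrightarrow \TrTPstar_{\leq\nu}/\TrTPstar_{<\nu}$.

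Combining these two points yields
\begin{equation*}
\dim_{\kk}\TrTPstar = \sum_{\nu\in C} \dim_{\kk}\bigl(\TrTPstar_{\leq\nu}/\TrTPstar_{<\nu}\bigr) \leq \sum_{\nu\in C} \dim_{\kk}\Tr(\cycloulamstar)_\nu = \dim_{\kk}\Tr(\cycloulamstar).
\end{equation*}
Since $\Tr$ sends products of categories to tensor products of traces, and since Theorem~\ref{Thm:TraceCatQG} identifies each $\Tr(\cyclo{\lam{k},*})$ with $W(\lam{k})$ as a graded vector space, the right-hand side equals $\dim_{\kk}\Wulam$, as required. The main obstacle will be verifying carefully that the stratification map is well-defined on the associated graded: one must check that the red and black generators and all local relations in $\cyclo{\lam{k},*}$ (in particular the cyclotomic relation) are respected modulo $\TrTPstar_{<\nu}$, which ultimately reduces to the bookkeeping that every application of \eqref{eq:UpAndRed} or \eqref{eq:UndeformedTPRels1} in $\TPstar$ produces only diagrams of strictly smaller profile.
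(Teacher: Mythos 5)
Your overall strategy is the paper's own: filter $\TrTPstar$ by the composition statistic $\nu$, produce surjections $\Tr(\cycloulamstar)_\nu\twoheadrightarrow\TrTPstar_{\leq\nu}/\TrTPstar_{<\nu}$, and sum dimensions. The first half of the plan (the filtration and the spanning set) is sound, and the final dimension count is fine (note only that for a poset-indexed filtration one has $\dim_{\kk}\TrTPstar\leq\sum_\nu\dim_{\kk}\bigl(\TrTPstar_{\leq\nu}/\TrTPstar_{<\nu}\bigr)$ rather than equality, but the inequality points the right way).

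The gap is in the well-definedness of the stratification map, which you correctly identify as the crux but then misdiagnose. You reduce it to ``bookkeeping'' about \eqref{eq:UpAndRed} and \eqref{eq:UndeformedTPRels1} knocking strings across red lines, i.e.\ to checking that the cyclotomic relation \eqref{eq:UndefCycloRel} is respected modulo lower filtration pieces. But the defining relations of $\cyclo{\lam{k},*}$ are not only KLR relations plus the cyclotomic relation; a priori they include all local relations of $\catQGdot$ --- in particular the extended $\mf{sl}_2$ relation \eqref{eq:Extendedsl2}, the curl relation \eqref{eq:CurlRel}, and the bubble relations --- which \emph{depend on the weight of the ambient region}. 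When you embed the $k$th block into $\TPstar$, the region to its right is no longer labelled $\lam{k}$ (there are red strings $(k-1),\dots,(1)$ to the right), so these weight-dependent relations simply do not match, and the naive assignment of diagrams is not obviously a homomorphism. The paper's resolution is two-fold: first it restricts on both sides to the all-downward objects $Q_m$ and $P_\nu$ (justified by \cite[Lemma~2.1]{BHLW17}, which shows the trace is unchanged upon passing to a generating full subcategory), and then it invokes \cite[Proposition~3.13]{Web17}, which says that $\EndP$ admits a presentation by diagrams with no critical points subject \emph{only} to KLR relations and the cyclotomic relation --- all of which are weight-independent. Without this presentation, or some substitute for it, the map is not known to be well-defined, and this is a genuine hole rather than bookkeeping.
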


\begin{proof}
    %First we construct a filtrations of $\TrTPstar$. 
    For $m\geq0$ let $Q_m\in\ob(\TP)$ be the direct sum of all Stendhal pairs $(\ui,\kappa)$ with $\ui\in (-I)^m$, so there are $m$ black strings and they are all oriented downward. By the categorified commutation relation for $\E_{+i}$ and $\E_{-i}$ and the fact that by \eqref{eq:UpAndRed} upward black strings commute with red strings, the objects $Q_m$ additively generate $\TP$. So \cite[Lemma~2.1]{BHLW17} implies that $\TrTPstar$ is isomorphic to the trace of the full subcategory of $\TPstar$ with objects $Q_m$. Since there are no morphisms between $Q_m$ and $Q_{m'}$ for $m\neq m'$, this implies that the trace decomposes as a direct sum:
    \begin{equation}
        \TrTPstar=\bigoplus_{m\geq 0}\TrTPstar_m,
    \end{equation}
    
    \noindent where $\TrTPstar_m$ is the image of the morphism space $\EndQ$ in the trace.
    
    We can refine this by a filtration indexed by the poset $C(m)$ of compositions of $m$ (c.f. \S\ref{subsec:cyclo}). For $\nu\in C(m)$ let $\EndQ_{\leq \nu}$ and $\EndQ_{< \nu}$ denote the subalgebras of $\EndQ$ spanned by diagrams that factor through a Stendhal pair $S$ with $\nu_S\leq \nu$ and $\nu_S< \nu$ respectively. Let $\TrTPstar_{\leq \nu}$ and $\TrTPstar_{<\nu}$ denote their images in the trace.
    
    The spaces $\TrTPstar_{\leq \nu}$ with $\lvert\nu\rvert=m$ form a filtration of $\TrTPstar_m$. The components of the associated graded space are defined by
    \begin{equation}
        \TrTPstar_{\nu}=\TrTPstar_{\leq \nu}\big/ \TrTPstar_{<\nu}.
    \end{equation}
    
    \noindent for $\nu\in C(m)$. In particular, this implies that
    \begin{equation}
        \dim_{\kk}\TrTPstar_m\geq\sum_{\nu\in C(m)}\dim_{\kk}\TrTPstar_{\nu}.
    \end{equation}
    
    Now consider the undeformed cyclotomic quotients $\cyclo{\lam{k}}$. For $k\in[1,n]$ and $m\geq 0$ let $P^{(k)}_{m}\in\ob(\cyclo{\lam{k}})$ be the direct sum of all $\E_{\ui}1_{\lam{k}}$ with $\ui\in(-I)^{m}$. For a composition $\nu\in C$, the tuple  $P_{\nu}=(P_{\nu(1)}^{(1)},\cdots,P_{\nu(n)}^{(n)})$ is an object in the product category $\cycloulam$.
    
    As with $\TP$, the objects $P_{\nu}$ additively generate $\cycloulam$ and there are no non-zero morphisms between $P_{\nu}$ and $P_{\nu'}$ for $\nu\neq\nu'$, so there is a direct sum decomposition
    \begin{equation}
        \Tr(\cycloulamstar)=\bigoplus_{\nu}\Tr(\cycloulamstar)_{\nu},
    \end{equation}
    
    \noindent where $\Tr(\cycloulamstar)_{\nu}$ is the image of $\EndP$ in the trace.
    
    For a composition $\nu$ with $\lvert\nu\rvert=m$, define an algebra homomorphism
    \begin{equation}
        \EndP\longrightarrow \EndQ_{\leq \nu}\big/ \EndQ_{< \nu}
    \end{equation}
    
    \noindent by sending a tuple $(D_1,\ldots ,D_n)$ of diagrams to the coset of
    \begin{equation}
	    \begin{tikzpicture}[very thick,baseline=(current bounding box.center)]
            \node (a) [inner xsep=10pt, inner ysep=6.6pt,draw] at (2,.25){$D_n$};
            \draw (a.-130) -- +(0,-.33);
            \draw (a.-50) -- +(0,-.33);
            \draw (a.130) -- +(0,.33);
            \draw (a.50) -- +(0,.33);
            \draw[wei] (3,-.5) -- +(0,1.5) node[at        start, below]{$(n)$};
            \draw[wei] (6,-.5) -- +(0,1.5) node[at start, below]{$(n-1)$};
            \node (a) [inner xsep=10pt, inner ysep=6.6pt,draw] at (4.5,.25){$D_{n-1}$};
            \draw (a.-130) -- +(0,-.33);
            \draw (a.-50) -- +(0,-.33);
            \draw (a.130) -- +(0,.33);
            \draw (a.50) -- +(0,.33);
            \draw[wei] (8,-.5) -- +(0,1.5) node[at start,below]{$(2)$}; 
            \node (a) [inner xsep=10pt, inner ysep=6.6pt,draw] at (9.5,.25){$D_1$};
            \draw (a.-130) -- +(0,-.33);
            \draw (a.-50) -- +(0,-.33);
            \draw (a.130) -- +(0,.33);
            \draw (a.50) -- +(0,.33);
            \node at (7,0){$\cdots$}; 
            \draw[wei] (11,-.5) -- +(0,1.5) node[at start,below]{$(1)$}; 
        \end{tikzpicture}
    \end{equation}
    
    \noindent If $D_k$ has a downward string with $\lam{k}_i$ dots (or an upward string) at the far right then by \eqref{eq:UndeformedTPRels1} (resp. \eqref{eq:UpAndRed}) we can pass that string through the red $(k)$-string in $\TP$. So we have an element of $\EndQ_{< \nu}$ and this map respects the cyclotomic relations \eqref{eq:UndefCycloRel}.
    
    It isn't obvious that the homomorphism is well-defined, since it doesn't respect the weights of regions and the defining relations depend on these weights. However, by \cite[Proposition~3.13]{Web17} the algebra $\EndP$ is generated by diagrams whose black strings have no critical points (that is they never turn back on themselves) subject only to KLR relations and the cyclotomic relation. Since these relations are independent of the weight of the region, the map is well-defined.
    
    Passing to the trace we get a linear map
    \begin{equation}
        \Tr(\cycloulamstar)_{\nu}\longrightarrow \TrTPstar_{\nu}
    \end{equation}
    
    \noindent for any composition $\nu$. By Lemma~\ref{lem:TraceSpanning}, $\TrTPstar$ is spanned by diagrams with no red-black crossings, so this is surjective. Since the trace commutes with products, Theorem~\ref{Thm:TraceCatQG} implies that
    \begin{align}
        \begin{split}
            \dim_{\kk}W(\ulambda)   & = \sum_{\nu\in C}\dim_{\kk}\Tr(\cyclostar{\ulambda})_{\nu}\\
                & \geq \sum_{\nu\in C} \dim_{\kk}\TrTPstar_{\nu}\\
                & \geq \sum_{m\geq 0} \dim_{\kk} \TrTPstar_m \\
                & = \dim_{\kk}\TrTPstar
        \end{split}
    \end{align}
    
    \noindent as required.
\end{proof}

\section{Proof of Theorem~\ref{thm:introtheorem}}\label{Sec:MainTheorem}

In this section we prove our main theorem on the trace decategorification of $\TPstar$ and $\defTPstar$. First we introduce some notation to make the statement more precise: for $u_1,\ldots ,u_n\in \currdot$, recursively define elements
\begin{equation}
    w(u_1,\ldots ,u_k)\in \WW(\lam{1})\otimes\cdots \WW(\lam{k})
\end{equation}

\noindent for $k\in[1,n]$ by setting $w(u_1):=u_1w_{\lam{1}}$ and
\begin{equation}\label{eq:wRecursive}
    w(u_1,\ldots ,u_k):=u_k(w(u_1,\ldots ,u_{k-1})\otimes w_{\lam{k}}).
\end{equation}

\noindent So $w(u_1,\ldots ,u_n)$ is a well-defined element of $\WWulam$.

Recall the isomorphism $\rho:\currdot\to\Tr(\catQGstar)$ from Theorem~\ref{Thm:TraceCatQG}.

\begin{theorem}\label{Thm:IsoFromWeyl}
    There are isomorphisms
    \begin{equation}
        \WWulam\longrightarrow \TrdefTPstar, \qquad \Wulam\longrightarrow \TrTPstar
    \end{equation}
    
    \noindent of graded $(\currdot,\bA_{\ulambda})$-bimodules and graded $\currdot$-modules respectively, sending $w(u_1,\ldots ,u_n)$ to the class of the diagram
    \begin{equation}\label{eq:ImageMainIso}
	    \begin{tikzpicture}[very thick,baseline=(current bounding box.center)]
            \node (a) [inner xsep=10pt, inner ysep=6.6pt,draw] at (2,.25){$\rho(u_n)$};
            \draw (a.-130) -- +(0,-.33);
            \draw (a.-50) -- +(0,-.33);
            \draw (a.130) -- +(0,.33);
            \draw (a.50) -- +(0,.33);
            \draw[wei] (3,-.5) -- +(0,1.5) node[at        start, below]{$(n)$};
            \draw[wei] (6,-.5) -- +(0,1.5) node[at start, below]{$(n-1)$};
            \node (a) [inner xsep=10pt, inner ysep=6.6pt,draw] at (4.5,.25){$\rho(u_{n-1})$};
            \draw (a.-130) -- +(0,-.33);
            \draw (a.-50) -- +(0,-.33);
            \draw (a.130) -- +(0,.33);
            \draw (a.50) -- +(0,.33);
            \draw[wei] (8,-.5) -- +(0,1.5) node[at start,below]{$(2)$}; 
            \node (a) [inner xsep=10pt, inner ysep=6.6pt,draw] at (9.5,.25){$\rho(u_1)$};
            \draw (a.-130) -- +(0,-.33);
            \draw (a.-50) -- +(0,-.33);
            \draw (a.130) -- +(0,.33);
            \draw (a.50) -- +(0,.33);
            \node at (7,0){$\cdots$}; 
            \draw[wei] (11,-.5) -- +(0,1.5) node[at start,below]{$(1)$}; 
        \end{tikzpicture}
    \end{equation}
      
    \noindent for any $u_1,\ldots ,u_n\in \currdot$.
\end{theorem}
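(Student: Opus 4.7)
The plan is to construct the first map $\Phi \colon \WWulam \to \TrdefTPstar$ and then obtain the second map by base change. Since the trace commutes with base change and $\TPstar$ is obtained from $\defTPstar$ by setting all $z \in Z$ to zero, we have $\TrTPstar \cong \TrdefTPstar \otimes_{\ATP} \kk$, and $\Wulam = \WWulam \otimes_{\ATP} \kk$, so the second isomorphism follows formally from the first by tensoring over $\ATP$ with $\kk$.

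First I would verify that the assignment sending $w(u_1,\ldots,u_n)$ to the class of the diagram in \eqref{eq:ImageMainIso} defines a homomorphism of graded $(\currdot,\ATP)$-bimodules. Compatibility with the $\currdot$-action follows from the recursion \eqref{eq:wRecursive} together with Theorem~\ref{Thm:TraceCatQG}: placing a diagram on the left corresponds exactly to applying the associated element of $\currdot$, and horizontal composition across red strings categorifies the coproduct. Compatibility with the right $\ATP$-action is exactly Proposition~\ref{prop:SymFuncTPC}, which asserts that pulling a bubble past the $(k)$-th red string picks up the projection $a_k$ on one side and leaves the bubble on the other. The defining relations of $\WWulam$ must then be checked to hold in $\TrdefTPstar$: the highest-weight relations are immediate from \eqref{eq:UpAndRed}, and the cyclotomic-style relation $(f_i \otimes 1)^{\lam{k}_i + 1} \cdot w_{\lam{k}} = 0$ follows from \eqref{eq:DeformedRels1} after moving the result into the trace.

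The heart of the proof is the analysis at the generic point. After tensoring with $\KK$ over $\ATP$, I would identify both sides with a single local Weyl module for $\currdottild$, where $\gtild = \g^{\oplus Z}$ is the unfurling and $\lamtild = \sum_{i \in I} \sum_{z \in Z_i} \Lambda_{(i,z)}$ as in \eqref{eq:LamTilde}. On the algebraic side, \cite{CFK10} exhibits $\WWulam \otimes_{\ATP} \KK$ as a tensor product of local Weyl modules for $\currK$ indexed by the parameters $z \in Z$ (with twists), which reassembles to $W_{\KK}(\lamtild)$ under the identification $\gtild = \g^{\oplus Z}$. On the categorical side, $\TrdefTPstar \otimes_{\ATP} \KK \cong \Tr(\genTP)$, and Theorem~\ref{Thm:GenTPEquiv} identifies $\genTP$ with $\cyclostartild$; applying Theorem~\ref{Thm:TraceCatQG} to $\gtild$ then identifies its trace with $W_{\KK}(\lamtild)$. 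Matching the two identifications with $\Phi \otimes_{\ATP} \KK$ reduces to tracking the cyclic vector ($w_{\ulambda} \leftrightarrow [1_\emptyset]$) and the action of each generator $e_{(i,z)} \otimes t^r$ and $f_{(i,z)} \otimes t^r$ of $\currdottild$, where Corollary~\ref{Cor:CommutationInGenTP} plays the crucial role of matching the direct-sum decomposition $\gtild = \g^{\oplus Z}$ with the spectral decomposition of dots on black strings placed between red strings.

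From the generic isomorphism we obtain $\dim_{\KK}(\TrdefTPstar \otimes_{\ATP} \KK) = \dim_{\KK}(\WWulam \otimes_{\ATP} \KK) = \dim_{\kk}\Wulam$, where the last equality uses Theorem~\ref{thm:WeylFree}. Combining this with the upper bound $\dim_{\kk}\TrTPstar \leq \dim_{\kk}\Wulam$ from Corollary~\ref{cor:DimSpecialPoint} and the upper semi-continuity of dimension (Lemma~\ref{Lem:UpperSemi}), equality is forced at both fibres and $\TrdefTPstar$ is free over $\ATP$. By flatness the map $\Phi$ is determined by its generic fibre, and since both $\WWulam$ and $\TrdefTPstar$ are free of the same $\ATP$-rank with $\Phi \otimes_{\ATP} \KK$ an isomorphism, $\Phi$ itself must be an isomorphism. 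The main obstacle I anticipate is the bookkeeping in Step~3: verifying that the composite identification with $W_{\KK}(\lamtild)$ produced by unfurling agrees with $\Phi \otimes_{\ATP} \KK$ on the nose rather than up to some unspecified automorphism of the target Weyl module, which requires carefully threading the cyclic generator and the $\currdottild$-action through the equivalences of Theorem~\ref{Thm:GenTPEquiv} and the twists appearing in \cite{CFK10}.
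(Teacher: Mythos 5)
The overall architecture (generic point identification via unfurling, upper semi-continuity of dimension, base change for the local Weyl module) matches the paper, but there is a genuine gap in the first step: the construction of $\Phi\colon\WWulam\to\TrdefTPstar$.

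You claim the cyclotomic-style relation $(f_i\otimes 1)^{\lam{k}_i+1}\cdot w_{\lam{k}}=0$ ``follows from \eqref{eq:DeformedRels1} after moving the result into the trace.'' This is not justified, and I do not believe it is easy to prove directly. Relation \eqref{eq:DeformedRels1} only governs how a \emph{single} downward black string crosses a red $(k)$-string; it says nothing about $\lam{k}_i+1$ parallel strings sitting between red strings, and the Stendhal pair carrying those strings is \emph{not} the zero object in $\defTP$ (unlike the $n=1$ cyclotomic case, where putting strings to the right of the single red string literally kills the diagram). The relation you want must instead be verified in the \emph{trace}, i.e.\ you must exhibit the relevant endomorphism class as a sum of commutators, and nothing in the local relations hands you that. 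This difficulty is precisely what the paper is engineered to avoid: Lemma~\ref{lemma:map} constructs the map from the Verma-like module $M(\ulambda)$ (whose presentation has only highest-weight relations, so well-definedness is automatic via Frobenius reciprocity and the tensor identity), Proposition~\ref{prop:PhiBimod} upgrades it to a surjective bimodule map, and then the proof of Theorem~\ref{Thm:IsoFromWeyl} compares $\Ker(\varphi)$ with the kernel of the natural projection $\psi\colon M(\ulambda)\otimes_{\Pi^{\otimes n}}\ATP\to\WWulam$. Because both $\WWulam$ (Theorem~\ref{thm:WeylFree}) and $\TrdefTPstar$ (Corollary~\ref{Cor:TraceFree}) are free over the graded local ring $\ATP$, and the two maps agree at the generic point by Proposition~\ref{Prop:IsoGenericPoint}, the kernels are equal — which is exactly the factorization through $\WWulam$ you were trying to check by hand. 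So the cyclotomic relation is a \emph{consequence} of the flatness/generic-point argument, not a hypothesis to be verified before it; attempting to verify it first puts you in a circular position.

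One smaller remark: you phrase the construction as ``verify that the assignment $w(u_1,\ldots,u_n)\mapsto$ diagram defines a homomorphism,'' but strictly speaking the elements $w(u_1,\ldots,u_n)$ span $\WWulam$ with many relations among them coming from $M(\ulambda)$ being a quotient of $\currdot^{\otimes n}$, not only the cyclotomic ones. Routing through $M(\ulambda)$ (induced from a one-dimensional $\mf{p}$-module) is what makes those other relations free, which is another reason the paper works at the Verma level.
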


In \S\ref{subsec:HomFromVerma} we construct a $\currdot$-module homomorphism from the tensor product of Verma-like modules $M(\ulambda)$ to $\TrdefTPstar$ and show it is surjective and compatible with the right actions by symmetric functions. Then in \S\ref{subsec:GenericIsoWeyl} we use the results of \S\ref{sec:unfurling} to show that this descends to an isomorphism at the generic point:
\begin{equation}
    \WWulam\otimes_{\ATP}\KK\longrightarrow \TrdefTPstar\otimes_{\ATP}\KK.
\end{equation}

\noindent Finally we use the upper bound on dimension from Proposition~\ref{cor:DimSpecialPoint} and upper semi-continuity of dimension to establish the theorem.

\subsection{Homomorphism from \texorpdfstring{$M($\underline{$\lambda$}$)$}{MLambda}}\label{subsec:HomFromVerma}

Recall the Verma-like modules $M(\lam{k})$ from \S\ref{subsec:Verma}. For $u_1,\ldots ,u_n\in\currdot$, recursively define elements
\begin{equation}
    m(u_1,\ldots ,u_k)\in M(\lam{1})\otimes\cdots\otimes M(\lam{k})
\end{equation}

\noindent by analogy with \eqref{eq:wRecursive}, so $m(u_1,\ldots ,u_n)\in M(\ulambda)$.

\begin{lemma}\label{lemma:map}
    There is a unique homomorphism of graded $\currdot$-modules
    \begin{equation}
        \varphi:M(\ulambda)\longrightarrow \Tr(\check{\mc{X}}^{\ulambda,*})
    \end{equation}
    
    \noindent sending $m(u_1,\ldots ,u_n)$ to the class of the diagram \eqref{eq:ImageMainIso} for any $u_1,\ldots,u_n\in \currdot$.
    %
    %\begin{equation}
	%    \begin{tikzpicture}[very thick,baseline=(current bounding box.center)]
    %        \node (a) [inner xsep=10pt, inner ysep=6.6pt,draw] at (2,.25){$\rho(u_n)$};
    %        \draw (a.-130) -- +(0,-.33);
    %        \draw (a.-50) -- +(0,-.33);
    %        \draw (a.130) -- +(0,.33);
    %        \draw (a.50) -- +(0,.33);
    %        \draw[wei] (3,-.5) -- +(0,1.5) node[at
    %        start,below]{$\lam{n}$};
    %        \draw[wei] (6,-.5) -- +(0,1.5) node[at
    %        start,below]{$\lam{n-1}$};
    %        \node (a) [inner xsep=10pt, inner ysep=6.6pt,draw] at (4.5,.25){$\rho(u_{n-1})$};
    %        \draw (a.-130) -- +(0,-.33);
    %        \draw (a.-50) -- +(0,-.33);
    %        \draw (a.130) -- +(0,.33);
    %        \draw (a.50) -- +(0,.33);
    %        \draw[wei] (8,-.5) -- +(0,1.5) node[at start,below]{$\lam{2}$}; 
    %        \node (a) [inner xsep=10pt, inner ysep=6.6pt,draw] at (9.5,.25){$\rho(u_1)$};
    %        \draw (a.-130) -- +(0,-.33);
    %        \draw (a.-50) -- +(0,-.33);
    %        \draw (a.130) -- +(0,.33);
    %        \draw (a.50) -- +(0,.33);
    %        \node at (7,0){$\cdots$}; 
    %        \draw[wei] (11,-.5) -- +(0,1.5) node[at start,below]{$\lam{1}$}; 
    %    \end{tikzpicture}
    %\end{equation}
%      
%    \noindent for any $u_1,\ldots,u_n\in \currdot$.
\end{lemma}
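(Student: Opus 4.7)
The plan is to factor $\varphi$ through the free tensor power. Define a $\kk$-linear map
\[
\tilde{\varphi}:\currdot^{\otimes n}\longrightarrow\TrdefTPstar,\qquad u_1\otimes\cdots\otimes u_n\longmapsto\bigl[\,\text{diagram \eqref{eq:ImageMainIso}}\,\bigr],
\]
which is manifestly well defined since it is specified by a diagram formula. The recursive formula \eqref{eq:wRecursive} yields a $\kk$-linear surjection $\Psi:\currdot^{\otimes n}\twoheadrightarrow M(\ulambda)$ sending $u_1\otimes\cdots\otimes u_n\mapsto m(u_1,\ldots,u_n)$; surjectivity holds because $m_{\lam{k}}$ generates $M(\lam{k})$ as a $\currdot$-module and one can iteratively build up any pure tensor through the coproduct. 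It therefore suffices to show $\ker\Psi\subseteq\ker\tilde{\varphi}$, which produces the desired map $\varphi$ uniquely.

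I would proceed by induction on $n$. The base case $n=1$ uses the composite
\[
M(\lam{1})\twoheadrightarrow\WW(\lam{1})\xrightarrow{\sim}\Tr(\defcyclo{\lam{1},*})\xrightarrow{\sim}\TrdefTPstar,
\]
where the first arrow is the standard quotient defining $\WW(\lam{1})$, the second is the isomorphism of Theorem~\ref{Thm:TraceCatQG}, and the third is induced by the equivalence of Corollary~\ref{Cor:EquivDefCyclo}; inspection shows that the composite sends $um_{\lam{1}}$ to the class of $\rho(u)$ placed to the left of a single red $(1)$-string, as required. For the inductive step, the new relations contributed to $\ker\Psi$ by the $n$-th factor are the Verma-like relations $(e_j\otimes t^r)m_{\lam{n}}=0$ and $((\xi_i\otimes 1)-\langle i,\lam{n}\rangle)m_{\lam{n}}=0$. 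For the first relation, a term $\rho(u_n(e_j\otimes t^r))$ contributes a dotted upward $j$-string at its far right; by \eqref{eq:UpAndRed} this string slides freely through the red $(n)$-string, and using the cyclic property of the trace it can be cycled around the remainder of the diagram until it rests to the right of the red $(1)$-string, yielding a violated diagram (black string to the right of all red strings) that is zero in $\defTPstar$. For the second relation, observe that $\rho((\xi_i\otimes 1)1_\mu)=\langle i,\mu\rangle 1_\mu$ acts as a scalar in each weight region, and the cumulative effect of sliding through $n$ red strings changes the leftmost weight from $0$ to $\lam{1}+\cdots+\lam{n}$, matching the coproduct-driven Cartan action on $M(\ulambda)$; the finer factor-wise Cartan-type relations on $m_{\ulambda}$ are handled by the bubble-slide identities of Proposition~\ref{prop:SymFuncTPC}.

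The main obstacle is tracking the interplay between the iterated coproduct of $\currdot$ governing the action on $M(\ulambda)$ and the cycling/sliding operations inside the trace; this bookkeeping is organized by the compatibility between the coproduct $\delta$ on $\Pi$ and the diagrammatic bubble slides established in \S\ref{subsec:TPSymFun}, which ensures that coproduct decompositions of elements of $U(\h[t])$ match the diagrammatic splittings across the red strings. Once well-definedness is secured, the $\currdot$-module homomorphism property is immediate from \eqref{eq:wRecursive}, since $u\cdot m(u_1,\ldots,u_n)=m(u_1,\ldots,u_{n-1},uu_n)$ and the corresponding diagrams differ only by prepending $\rho(u)$, which is the definition of the $\Tr(\catQGstar)\cong\currdot$-action on $\TrdefTPstar$. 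Uniqueness of $\varphi$ follows from the surjectivity of $\Psi$.
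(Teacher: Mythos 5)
Your strategy --- factor $\varphi$ through a surjection $\Psi:\currdot^{\otimes n}\twoheadrightarrow M(\ulambda)$ and verify $\ker\Psi\subseteq\ker\tilde{\varphi}$ --- genuinely differs from the paper's. The paper instead uses the tensor identity $M(\ulambda,\mu)\cong\Ind_{\mf{p}}^{\g[t]}(M(\ulambda)\otimes\kk_{\mu})$ and Frobenius reciprocity, reducing the inductive step to checking that ``insert a new red string at the far left'' is a $\mf{p}$-module map, with the $\mf{n}[t]$-compatibility being exactly the local relations \eqref{eq:UpAndRed} and \eqref{eq:DotsAndRed}. Your base case matches the paper's, but the inductive step as you have written it contains two genuine errors.

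First, the ``cycling'' argument does not work. $\TrdefTPstar$ is the trace of a $1$-category, so the only trace relation is $[f\cdot g]=[g\cdot f]$, which exchanges the top and bottom halves of a diagram; there is no operation that moves a vertical black string horizontally past the boxes $\rho(u_k)$ for $k<n$. The $j$-string can be slid through the red $(n)$-string by \eqref{eq:UpAndRed}, but it then hits $\rho(u_{n-1})$ and goes no further, so the diagram never becomes violated and your conclusion that it vanishes is false. Second, and more fundamentally, you have misidentified $\ker\Psi$. Since the $\currdot$-action on $M(\ulambda)$ is via the coproduct, the tensor $u_1\otimes\cdots\otimes u_{n-1}\otimes u_n(e_j\otimes t^r)$ is \emph{not} in $\ker\Psi$: using $\Delta(e_j\otimes t^r)(m'\otimes m_{\lam{n}})=(e_j\otimes t^r)m'\otimes m_{\lam{n}}$ (where the Verma relation enters) one computes $\Psi(u_1\otimes\cdots\otimes u_n(e_j\otimes t^r))=m(u_1,\ldots,(e_j\otimes t^r)u_{n-1},u_n)$, which is generally nonzero. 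The actual kernel element coming from this Verma relation is the \emph{difference} of the two tensors, and what one must check is that $\tilde{\varphi}$ agrees on them; this does hold, because the upward $j$-string with its $r$ dots slides through the red $(n)$-string by \eqref{eq:UpAndRed} and \eqref{eq:DotsAndRed}, so the two diagrams are already equal in $\defTPstar$ before taking the trace. The Cartan relation needs the same correction. With these repairs the approach can be completed, but it then amounts to a hands-on unwinding of the adjunction the paper uses directly. One smaller point: $\tilde{\varphi}$ is not ``manifestly'' well defined, since $\rho(u_k)$ is a trace class rather than a specific diagram; you must remark that commutators in $\catQGstar$ remain commutators after sandwiching between red strings, so the resulting class in $\TrdefTPstar$ is independent of the chosen representatives.
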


\begin{proof}
    We proceed by induction on $n$; the number of tensor factors. If $n=1$ then by Corollary~\ref{Cor:EquivDefCyclo} there is a 2-natural isomorphism between $\defTPlamstar$ and the deformed cyclotomic quotient $\defTPlamstar$, so the claim follows from Theorem~\ref{Thm:TraceCatQG}.
    
    %First suppose that $n=1$ so that $\ulambda=(\lambda)$. There is a graded action of $\dot{U}(\mf{g}[t])$ on $\Tr(\check{\mc{X}}^{\lambda,*})$ such that the class $[1_{\emptyset}]$ of the identity on the trivial object $\emptyset$ has weight $\lambda$. So there is a homomorphism from $\dot{U}(\mf{g}[t])$ to $\Tr(\check{\mc{X}}^{\ulambda,*})$ sending $1_{\lambda}$ to $[1_{\emptyset}]$. By \eqref{eq:UpAndRed} this descends to a map from $M(\lambda)$. \red{If we prove an equivalence $\defTP\cong\defcyclolam$ then this is immediate}
	
	Assume the claim holds for $n\in \NN$ and take $\mu\in X^+$. Recall the Lie algebra $\mf{p}=\mf{h}\oplus\mf{n}[t]$ and the one dimensional $\mf{p}$-module $\kk_{\mu}$ from \S\ref{subsec:Verma}. The $\dot{U}(\mf{g}[t])$-module map from the inductive hypothesis induces a $\mf{p}$-module map $M(\ulambda)\to\Tr(\defTPstar)$. There is a map of graded $\kk$-vector spaces from $\Tr(\check{\mc{X}}^{\ulambda,*})$ to $\Tr(\check{\mc{X}}^{(\ulambda,\mu),*})$ induced by placing the new red string at the far left of the diagram. By \eqref{LabellingAcrossStrands} this increases the weight by $\mu$ and so there is an induced $\mf{h}$-module map from $M(\ulambda)\otimes \kk_{\mu}$ to $\Tr(\check{\mc{X}}^{(\ulambda,\mu),*})$. In fact this is a $\mf{p}$-module map by \eqref{eq:DotsAndRed} and \eqref{eq:UpAndRed}. By Frobenius reciprocity and the tensor identity this yields a $\mf{g}[t]$-module homomorphism $M(\ulambda,\mu)\to \Tr(\check{\mc{X}}^{(\ulambda,\mu),*})$ of the desired form. Uniqueness is clear.
\end{proof}

Recall that $M(\ulambda)$ carries a right action by $\Pi^{\otimes n}$. We can consider $\Tr(\check{\mc{X}}^{\ulambda,*})$ as a right $\Pi^{\otimes n}$-module via projection $\bigotimes a_k:\Pi^{\otimes n}\to \bA_{\ulambda}$.

\begin{proposition}\label{prop:PhiBimod}
    The map $\varphi$ above is a surjective homomorphism of $(\currdot,\Pi^{\otimes n})$-bimodules.
\end{proposition}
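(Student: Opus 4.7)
The plan is first to establish right $\Pi^{\otimes n}$-equivariance of $\varphi$ on the cyclic generator $m_0 := m_{\lam{1}} \otimes \cdots \otimes m_{\lam{n}}$ for algebra generators of $\Pi^{\otimes n}$, then extend via commutativity of the two actions, and finally deduce surjectivity from Lemma~\ref{lem:TraceSpanning}.

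For equivariance, since the right $\Pi^{\otimes n}$-action commutes with the left $\currdot$-action and $M(\ulambda)$ is cyclic over $\currdot$ with generator $m_0$, it suffices to verify
\[
    \varphi\bigl(m_0 \cdot (1, \ldots, p_{i,r}, \ldots, 1)\bigr) = \varphi(m_0) \cdot a_k(p_{i,r})
\]
for each $k \in [1,n]$, $i \in I$, $r \geq 1$, with $p_{i,r}$ in the $k$-th tensor factor, and then extend to all of $\Pi^{\otimes n}$ by multiplicativity of both actions. Unpacking the recursive definition of $m(u_1,\ldots,u_n)$ and using the primitive coproduct $\Delta(\xi_i \otimes t^r) = (\xi_i \otimes t^r) \otimes 1 + 1 \otimes (\xi_i \otimes t^r)$, I would obtain a telescoping identity
\[
    m_0 \cdot (1, \ldots, p_{i,r}, \ldots, 1) = m(1, \ldots, (\xi_i \otimes t^r)_k, \ldots, 1) - m(1, \ldots, (\xi_i \otimes t^r)_{k-1}, \ldots, 1),
\]
with the convention that the second term vanishes for $k = 1$. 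Applying $\varphi$ and using Theorem~\ref{Thm:TraceCatQG}, the right-hand side becomes the class of the trivial Stendhal diagram with a bubble $b_{\mu_k}(p_{i,r})$ immediately to the left of the red $(k)$-string minus the same diagram with $b_{\mu_{k-1}}(p_{i,r})$ immediately to its right. By Proposition~\ref{prop:SymFuncTPC} applied to $f = p_{i,r}$ (equivalently, the coefficient of $x^r$ in the slide \eqref{eq:SlidesTPGenFunc3}), this difference equals $a_k(p_{i,r}) \cdot \varphi(m_0)$, which matches the desired right-hand side since $\ATP$ is commutative.

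For surjectivity, Lemma~\ref{lem:TraceSpanning} tells us that $\TrdefTPstar$ is generated as an $\ATP$-module by classes of Stendhal diagrams with no red-black crossings. Such a diagram, viewed as an endomorphism of some Stendhal pair $S$, splits along vertical strips bounded by the red strings into $n$ pieces lying in $\catQGdotstar$, each of which closes up independently under the trace within its own region. By Theorem~\ref{Thm:TraceCatQG}, each resulting trace class in $\Tr(\catQGstar(1_{\mu_k}, 1_{\mu_k}))$ equals $\rho(u_k)$ for some $u_k \in 1_{\mu_k}\currdot$, so the whole trace class lies in $\varphi(M(\ulambda))$ via $\varphi(m(u_1, \ldots, u_n))$. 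Since the image of $\varphi$ is a $\currdot$-submodule and is also stable under the right $\ATP$-action (by the equivariance just proved together with surjectivity of $\bigotimes_k a_k : \Pi^{\otimes n} \to \ATP$), it contains the $\ATP$-span of all such classes, hence all of $\TrdefTPstar$.

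The main obstacle is the careful bookkeeping in the equivariance verification: one must correctly track the coproduct expansion of $(\xi_i \otimes t^r)$ through the iterated recursion defining $m(u_1, \ldots, u_n)$ to produce exactly the right telescoping difference, and then recognize that the two terms match precisely the two sides of the bubble slide across the red $(k)$-string given in Proposition~\ref{prop:SymFuncTPC}.
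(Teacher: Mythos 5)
Your argument is correct and takes essentially the same route as the paper: establish right $\Pi^{\otimes n}$-equivariance via the primitive coproduct telescoping and the bubble slide of Proposition~\ref{prop:SymFuncTPC}, then deduce surjectivity from the spanning set (Lemma~\ref{lem:TraceSpanning}) together with surjectivity of $\rho$. The only inessential organizational difference is that you reduce the equivariance check to the cyclic generator $m_0$ before invoking commutativity of the two actions, whereas the paper carries out the telescoping computation directly on a general element $m(u_1,\ldots,u_n)$.
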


\begin{proof}
    Take $u_1,\ldots ,u_n\in\dot{U}(\mf{g}[t])$. By the definition of the action of $\Pi^{\otimes n}$ and the coproduct on $U(\mf{g}[t])$, a power sum symmetric function $p_{i,r}$ in the $k$th copy of $\Pi$ in $\Pi^{\otimes n}$ sends $m(u_1,\ldots ,u_n)$ to
    \begin{equation}
        m(u_1,\ldots ,u_k(\xi_i\otimes t^r),\ldots ,u_n)-m(u_1,\ldots ,(\xi_i\otimes t^r)u_{k-1},\ldots ,u_n)
    \end{equation}
    
    \noindent By Theorem~\ref{Thm:TraceCatQG} and the bubbles slides in Proposition~\ref{prop:SymFuncTPC}, this is mapped under $\varphi$ to
    \begin{equation}
        \varphi(m(u_1,\ldots,u_n))\cdot a_k(p_{i,r}).
    \end{equation}
    
    \noindent Surjectivity follows from the spanning set Lemma~\ref{lemma:SpanningSet} and surjectivity of $\rho$.
\end{proof}

\subsection{Unfurling and the trace}\label{subsec:GenericIsoWeyl}

In this subsection we apply the trace decategorification results from \cite{BHLW17} to the unfurled 2-representation on $\genTP$ of \S\ref{sec:unfurling}. We use this to determine the structure of $\TrdefTPstar$ at the generic point and apply upper semi-continuity to show that $\varphi$ descends to an isomorphism from $\WWulamk$. Since the underlying field and Cartan datum vary in this section we take care to include them in notation.

\subsubsection{The trace of $\genTP$}\label{subsec:TraceGenTP}

Recall from \S\ref{sec:unfurling} that the set $\Itild=I\times Z$ indexes simple roots for the unfurled Lie algebra $\gtild=\g^{\oplus Z}$ and there is a 2-representation of the corresponding categorified quantum group $\catQGstarKtild$ on the idempotent completion $\genTP$ of $\defTPstar\otimes_{\ATP}\KK$, where $\KK=\overline{\kk(Z)}$. By Theorem~\ref{Thm:TraceCatQG} there is an induced $\currdottild$-action on $\TrgenTP$.

We need to be a little careful with interpreting this action: the isomorphism $\rho$ is expressed diagrammatically, but the diagrammatics in $\catQGstarKtild$ doesn't match the diagrammatics of $\genTP$. In particular, for $(i,z)\in\pm\Itild$ and $r\in\NN$, $e_{(i,z)}\otimes t^r$ acts on the class of a morphism in $\TrgenTP$ by applying the functor $\E_{(i,z)}$ and acting by $\ytild\eptild_{(i,z)}\in \Rtild_1$ - the KLR algebra for $\gtild$. By Proposition~\ref{prop:KLRCompletions}, this is the same as applying $\E_i$ and acting by $(y-z)^r\varep_{(i,z)}\in R_1$. So the action of $\currdottild$ on $\TrgenTP$ is ``twisted'' according to $z\in Z$.

We wish to modify the action of $\currdottild$ on $\TrgenTP$ to remove this twist.  For $z\in  Z$, the current algebra $\currK$ of $\g$ has an automorphism $\sigma_z$ given by
\begin{equation}\label{eq:TwistedAction}
    \sigma_z(x\otimes t^r)=x\otimes (t+z)^r
\end{equation}

\noindent for $x\in \g$ and $r\geq 0$. For any $\currK$-module $M$ we can define a \emph{$z$-twisted action} on $M$ by
\begin{equation}
    u*m=\sigma_z(u)m
\end{equation}

\noindent for $u\in\currK$ and $m\in M$. Let $\sigma_Z$ denote the automorphism of $\currtild$ which restricts to $\sigma_z$ on the copy of $\g[t]$ indexed by $z$ under the identification $\gtild[t]=\g[t]^{\oplus Z}$. We define the \emph{$Z$-twisted action} on a $\currtild$-module as above.

Let $\TrZgenTP$ denote the trace of $\genTP$ under the $Z$-twisted action of $\currdottild$. Now $e_{(i,z)}\otimes t^r$ acts on the class of a morphism by applying $\E_i$ and acting by $y^r\varep_{(i,z)}$, or in diagrammatic terms, by adding a black $i$-string at the left, projecting to the generalized $z$-eigenspace of a dot, and applying $r$ dots.

Recall from Theorem~\ref{Thm:GenTPEquiv} that there is a 2-natural isomorphism
\begin{equation}
    \eta:\cyclostartild\longrightarrow \genTP
\end{equation}

\noindent where $\cyclostartild$ is the cyclotomic quotient of $\catQGstarKtild$ of weight

\begin{equation}
    \lamtild=\sum_{i\in I}\sum_{z\in Z_i} \Lambda_{(i,z)}.
\end{equation}

\noindent By Theorem~\ref{Thm:TraceCatQG}, the trace of $\cyclostartild$ is isomorphic to the local Weyl module for $\currdottild$ of weight $\lamtild$, so taking the trace of $\eta$ and twisting by $Z$ yields an isomorphim of $\currdottild$-modules:
\begin{equation}\label{eq:TrZeta}
    \Tr^{Z}(\eta):\WZtild\longrightarrow \TrZgenTP,
\end{equation}

\noindent where $\WZtild$ denotes the local Weyl module under the $Z$-twisted action.

\subsubsection{Application to $\TrdefTPstar$}

By \cite[Corollary~6]{CFK10}, there is an isomorphism of $\currdotK$-modules
\begin{equation}\label{eq:WeylGenericIso}
    \WWulamk\otimes_{\ATP}\KK\longrightarrow \bigotimes_{i\in I}\bigotimes_{z\in Z_i}W^z_{\KK}(\Lambda_i),
\end{equation}
    
%\noindent where $\currK$ acts on the right via the coproduct and 
\noindent where $W^z_{\KK}(\Lambda_i)$ denotes the $z$-twisted $\currK$-module structure on the local Weyl module $W_{\KK}(\Lambda_i)$. The module on the right coincides with $\WZtild$ using the identification $\gtild=\g^{\oplus Z}$. Moreover, since the trace is invariant under idempotent completion, there are $\KK$-linear isomorphisms
\begin{equation}\label{eq:GenericTP}
    \TrZgenTP\longrightarrow \TrdefTPstar\otimes_{\ATP}\KK\longrightarrow \TrdefTPstar\otimes_{\ATP}\KK.
\end{equation}

Now we can relate $\WWulamk$ and $\TrdefTPstar$ at the generic point:
    
\begin{proposition}\label{Prop:IsoGenericPoint}
    The composition
    \begin{equation}\label{eq:CompositionGeneric}
        \WWulamk\otimes_{\ATP}\KK\longrightarrow \WZtild\longrightarrow \TrZgenTP\longrightarrow \TrdefTPstar\otimes_{\ATP}\KK
    \end{equation}
    
    \noindent of the $\KK$-linear isomorphisms \eqref{eq:WeylGenericIso}, \eqref{eq:TrZeta}, and \eqref{eq:GenericTP} sends $w(u_1,\ldots ,u_n)$ to the class of the diagram \eqref{eq:ImageMainIso} for any $u_1,\ldots,u_n\in\currdotk$.
\end{proposition}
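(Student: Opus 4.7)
We proceed by induction on $n$, using the recursive definition \eqref{eq:wRecursive} of $w(u_1,\ldots,u_n)$. Let $\Phi$ denote the composition in \eqref{eq:CompositionGeneric}. The two key ingredients are (a) $\Phi$ is a $\currdotk$-module map, and (b) $\Phi$ sends the tensor of cyclic vectors $w_{\ulambda}=w_{\lam{1}}\otimes\cdots\otimes w_{\lam{n}}$ to the class $[1_{\emptyset}]\in\TrdefTPstar\otimes_{\ATP}\KK$ of the trivial Stendhal pair (consisting of $n$ red strings and no black strings).

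For (a), each constituent of $\Phi$ is $\currdottild$-equivariant under the $Z$-twisted action (by construction in \S\ref{subsec:TraceGenTP}, combined with Theorem~\ref{Thm:TraceCatQG} for $\gtild$). Under the diagonal embedding $\currdotk\hookrightarrow\currdottild$, this twisted action specializes to the natural $\currdotk$-actions on both ends: on $\TrdefTPstar\otimes_{\ATP}\KK$ via $\rho$, and on $\WWulamk\otimes_{\ATP}\KK$ via the coproduct. This is a direct computation using $\sigma_z(x\otimes t^r)=x\otimes(t+z)^r$ together with the identity $e_i=\sum_z e_{(i,z)}$, in the same spirit as the observation at the end of \S\ref{subsec:TraceGenTP}. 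For (b), we trace $w_{\ulambda}$ through each of the three maps: \eqref{eq:WeylGenericIso} sends $w_{\ulambda}\otimes 1$ to the tensor of twisted cyclic vectors, i.e.\ to $w_{\lamtild}\in\WZtild$; the iso \eqref{eq:TrZeta} (combining Theorem~\ref{Thm:TraceCatQG} for $\gtild$ with $\Tr^Z(\eta)$) sends $w_{\lamtild}$ first to $[1_{\lamtild}]\in\Tr(\cyclostartild)$ and then to $[1_{\emptyset}]\in\TrZgenTP$ via Theorem~\ref{Thm:GenTPEquiv}; finally \eqref{eq:GenericTP} carries this class to the corresponding class in $\TrdefTPstar\otimes_{\ATP}\KK$.

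The base case $n=1$ follows immediately from (a) and (b): $\Phi(u_1 w_{\lam{1}})=u_1\cdot[1_{\emptyset}]$, which by the definition of the $\currdotk$-action via $\rho$ equals the class of \eqref{eq:ImageMainIso}. For the inductive step, write $w(u_1,\ldots,u_n)=u_n(v\otimes w_{\lam{n}})$ with $v=w(u_1,\ldots,u_{n-1})$; by $\currdotk$-equivariance it suffices to show that $\Phi$ sends $v\otimes w_{\lam{n}}$ to the class of the diagram obtained from the inductive hypothesis for $v$ by augmenting it with an additional red $(n)$-string on the left, since applying $u_n$ via $\rho$ then places $\rho(u_n)$ at the far left as in \eqref{eq:ImageMainIso}. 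This augmentation compatibility is verified constituent-by-constituent: under \eqref{eq:WeylGenericIso}, $v\otimes w_{\lam{n}}$ picks up the additional cyclic vectors $\bigotimes_{i,\,z\in Z_i^{(n)}} w^z_{\Lambda_i}$ on the right; this corresponds to increasing $\lamtild$ by $\sum_{i,\,z\in Z_i^{(n)}}\Lambda_{(i,z)}$ on the cyclotomic side; and under $\eta$ the empty Stendhal pair in $\genTP$ for $\ulambda$ is precisely the empty pair for $(\lam{1},\ldots,\lam{n-1})$ with an extra red $(n)$-string adjoined on the left.

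\textbf{Main obstacle.} The inductive-step compatibility for $\eta$ is the crux, since the unfurled Lie algebras $\gtild^{(n-1)}\hookrightarrow\gtild^{(n)}$ and the cyclotomic weights $\lamtild^{(n-1)}\le\lamtild^{(n)}$ differ between the two settings. This is handled by invoking the universal property of cyclotomic quotients (\cite[Theorem~4.25]{Rou12}) used in the construction of $\eta$ in Theorem~\ref{Thm:GenTPEquiv}: the empty Stendhal pair in $\genTP$ is characterized as the unique (up to isomorphism) highest-weight object of weight $\lamtild$ on which dots act nilpotently, and this characterization is functorial under the passage $(\lam{1},\ldots,\lam{n-1})\mapsto\ulambda$, which on the diagrammatic side is implemented by adjoining a red $(n)$-string on the far left.
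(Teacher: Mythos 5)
Your proof takes a genuinely different route from the paper's, and the key step you flag as the \emph{main obstacle} is in fact a real gap that the sketched resolution does not close.

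The paper's proof never compares structures across different values of $n$. It fixes $\ulambda$ throughout, observes that the base case $w(1,\ldots,1)\mapsto[1_{\emptyset}]$ is immediate, and then does the inductive step by building up each $u_k$ from generators $e_i\otimes t^r$ ($i\in\pm I$): assuming the claim for $w=w(u_1,\ldots,u_k,1,\ldots,1)$, it deduces it for $w^+=w(u_1,\ldots,(e_i\otimes t^r)u_k,1,\ldots,1)$. The image of $w$ is the class of an endomorphism $D$ of a Stendhal pair $S$ with no black strings left of the red $(k{+}1)$-string, and the computation that $w^+\mapsto[D^+]$ is a direct application of the discussion in \S\ref{subsec:TraceGenTP} (the ``untwisting'' of $\Rtildhat_m\to\Rhat_m$) together with Corollary~\ref{Cor:CommutationInGenTP}. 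Everything lives inside the single unfurled setting for $\ulambda$, so the three constituent isomorphisms never need to be compared for different weight sequences.

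Your proposal instead inducts on $n$, which forces you to relate the entire chain $\WW(\ulambda')_{\KK'}\to W^Z_{\KK'}(\lamtild')\to\Tr^Z(\mc{G}^{\ulambda'})\to\Tr(\check{\mc{X}}^{\ulambda',*})\otimes\KK'$ for $\ulambda'=(\lam{1},\ldots,\lam{n-1})$ to the chain for $\ulambda$. This is where the argument breaks down. The unfurled Lie algebras $\gtild'=\g^{\oplus Z'}$ and $\gtild=\g^{\oplus Z}$ are different, the scalars $\KK'=\overline{\kk(Z')}\subsetneq\KK$ are different, the weights $\lamtild'\neq\lamtild$, and the cyclotomic quotients $\cyclostar{\lamtild'}$, $\cyclostartild$ live in different ambient $2$-categories $\dot{\mc{U}}^{*}_{\KK'}(\gtild')$ and $\catQGstarKtild$. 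Adjoining a red $(n)$-string on the left does not give a morphism of $2$-representations between $\mc{G}^{\ulambda'}$ and $\mc{G}^{\ulambda}$ over a common categorified quantum group, so the universal property of \cite[Theorem~4.25]{Rou12} does not yield the claimed functoriality of $\eta$ in $n$. Proving the required compatibility square is possible in principle, but it would amount to a substantial additional lemma (essentially a ``stacking'' compatibility of the unfurled equivalences across adjacent $n$), none of which is supplied or cited, and which the paper deliberately avoids by keeping the induction internal to a fixed $\ulambda$.

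Two smaller points worth flagging. First, your claim (a) — that $\Phi$ is $\currdotk$-equivariant — is true but is not merely a bookkeeping observation: the compatibility of the diagonal $\g\hookrightarrow\gtild$ with the $Z$-twist and the completion isomorphism $\Rtildhat_m\to\Rhat_m$ is exactly the content of the \S\ref{subsec:TraceGenTP} discussion, i.e.\ it is the same calculation the paper performs in its inductive step, not a separate shortcut. Second, even granting equivariance, the recursive structure of $w(u_1,\ldots,u_n)$ applies $u_k$ only after restricting to the first $k$ factors, so equivariance of the full map $\Phi$ for $\ulambda$ does not on its own let you track $w(u_1,\ldots,u_n)$ without the cross-$n$ compatibility, which is precisely the unproved step.
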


\begin{proof}
    The claim is immediate if all $u_k=1$. Assume the statement holds for $w:=w(u_1,\ldots, u_k,1,\ldots ,1)$ for some $k\in[1,n]$ and $u_1,\ldots,u_k\in\currdotk$ and take $i\in \pm I$ and $r\in \NN$. We will deduce the claim for $w^+:=(u_1,\ldots ,(e_i\otimes t^r)u_k,1,\ldots,1)$. The proposition follows by induction.
    
    By assumption, the composition \eqref{eq:CompositionGeneric} sends $w$ to the class of an endomorphism $D$ of a Stendhal pair $S$ with no black strings to the left of the red $(k+1)$-string. Let $S^+$ be the Stendhal pair obtained from $S$ as in Corollary~\ref{Cor:CommutationInGenTP}; that is, by placing a black string labelled by $\lvert i\rvert$ and oriented according to the sign of $i$ immediately to the right of this red string. Let $D^+$ be the endomorphism of $S^+$ obtained from $D$ by inserting this new string and acting on it with $r$ dots. We need to show that $w^+\mapsto [D^+]$.
    
    We obtain $w^+$ from $w\in \WWulamk$ by applying $e_i\otimes t^r$ to the first $k$ tensor factors. So in $\WZtild$,
    \begin{equation}
        w^+=\sum_{z\in Z^{(\leq k)}}(e_{(i,z)}\otimes t^r)* w,
    \end{equation}
    
    \noindent under the $Z$-twisted action, where $Z^{(\leq k)}=\bigcup_{l\leq k}Z^{(l)}$.
    
    Since $\Tr^Z(\eta)$ is an isomorphism of $\currtild$-modules, it sends $w^+$ to
    \begin{equation}\label{eq:ImageOfw'}
        \sum_{z\in Z^{(\leq k)}}(e_{(i,z)}\otimes t^r)* [D].
    \end{equation}
    
    \noindent By the discussion in \S\ref{subsec:TraceGenTP} this is the class of $r$ dots acting on the left-most string in
    \begin{equation}
        \sum_{z\in Z^{(\leq k)}}\E_{(i,z)}D.
    \end{equation}
    
    \noindent The isomorphism in Corollary~\ref{Cor:CommutationInGenTP} sends this to $D^+$ and so they are equal in the trace. The claim follows.
\end{proof}

\begin{corollary}\label{Cor:TraceFree}
    The trace $\TrdefTPstar$ is a free graded right $\ATP$-module.
\end{corollary}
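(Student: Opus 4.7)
The strategy is to compare $\dim_{\kk}\TrdefTPstar\otimes_{\ATP}\kk$ and $\dim_{\KK}\TrdefTPstar\otimes_{\ATP}\KK$ and apply the upper semi-continuity criterion Lemma~\ref{Lem:UpperSemi}. Since the trace commutes with base change, we have $\TrdefTPstar\otimes_{\ATP}\kk\cong \TrTPstar$, which is finite-dimensional over $\kk$ by Corollary~\ref{cor:DimSpecialPoint}. Graded Nakayama for the graded local ring $\ATP$ then implies that $\TrdefTPstar$ is finitely generated over $\ATP$, so Lemma~\ref{Lem:UpperSemi} is applicable.

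First I would compute the dimension at the generic point. By Proposition~\ref{Prop:IsoGenericPoint} there is a $\KK$-linear isomorphism $\WWulamk\otimes_{\ATP}\KK\cong \TrdefTPstar\otimes_{\ATP}\KK$, and by Theorem~\ref{thm:WeylFree} the global Weyl module $\WWulamk$ is free over $\ATP$. Therefore
\begin{equation*}
    \dim_{\KK}\TrdefTPstar\otimes_{\ATP}\KK \;=\; \dim_{\KK}\WWulamk\otimes_{\ATP}\KK \;=\; \dim_{\kk}\WWulamk\otimes_{\ATP}\kk \;=\; \dim_{\kk}\Wulam.
\end{equation*}
Combining this with the upper bound from Corollary~\ref{cor:DimSpecialPoint},
\begin{equation*}
    \dim_{\kk}\TrdefTPstar\otimes_{\ATP}\kk \;=\; \dim_{\kk}\TrTPstar \;\leq\; \dim_{\kk}\Wulam \;=\; \dim_{\KK}\TrdefTPstar\otimes_{\ATP}\KK.
\end{equation*}
Upper semi-continuity of dimension gives the reverse inequality, so equality holds and Lemma~\ref{Lem:UpperSemi} concludes that $\TrdefTPstar$ is a free graded right $\ATP$-module.

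I do not anticipate a major obstacle — all substantive work has already been done in earlier sections. The only mild subtlety is confirming the hypotheses of Lemma~\ref{Lem:UpperSemi}: one must verify that $\TrdefTPstar$ is finitely generated over $\ATP$, which, as noted, follows from graded Nakayama applied to the finite-dimensional $\kk$-vector space $\TrTPstar$. The local Weyl modules $W(\lam{k})$ are known to be finite-dimensional in ADE type, guaranteeing $\dim_{\kk}\Wulam<\infty$ and making all the above dimension comparisons meaningful.
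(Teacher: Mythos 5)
Your proof is correct and takes essentially the same approach as the paper's: compare $\dim_{\kk}\TrdefTPstar\otimes_{\ATP}\kk$ with $\dim_{\KK}\TrdefTPstar\otimes_{\ATP}\KK$ via Proposition~\ref{Prop:IsoGenericPoint}, Corollary~\ref{cor:DimSpecialPoint}, and freeness of $\WWulam$, then invoke Lemma~\ref{Lem:UpperSemi}. You add a welcome bit of care (graded Nakayama for finite generation, finiteness of local Weyl modules) that the paper leaves implicit, but the argument is the same.
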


\begin{proof}
    By Proposition~\ref{Prop:IsoGenericPoint},
    \begin{equation}
        \dim_{\KK} \TrdefTPstar\otimes_{\ATP}\KK= \dim_{\KK}\WWulamk\otimes_{\ATP}\KK.
    \end{equation}
    
    \noindent If $\kk$ denotes the unique simple graded $\ATP$-module then $\Wulamk\cong \WWulamk\otimes_{\ATP}\kk$ and $\TrTPstar\cong \TrdefTPstar\otimes_{\ATP}\kk$, so Corollary~\ref{cor:DimSpecialPoint} implies that
    \begin{equation}
        \dim_{\kk} \TrdefTPstar\otimes_{\ATP}\kk\leq \dim_{\kk}\WWulamk\otimes_{\ATP}\kk.
    \end{equation}
    
    By Theorem~\ref{thm:WeylFree}, $\WWulamk$ is a free graded right $\ATP$-module so the right hand terms in the above equations are equal by upper semi-continuity of dimension (see Lemma~\ref{Lem:UpperSemi}). Now the claim follows by appealing to upper semi-continuity again.
\end{proof}

Now we can prove the main theorem.

\begin{proof}[Proof of Theorem~\ref{Thm:IsoFromWeyl}]
    It suffices to construct the isomorphism from $\WWulam$. There are surjective homomorphisms of graded $(\currdot,\bA_{\ulambda})$-bimodules
    \begin{equation}\label{eq:TwoBimodMaps}
        \begin{tikzcd} 
        \WW(\ulambda)& M(\ulambda)\otimes _{\Pi^{\otimes n}}\bA_{\ulambda} \ar[l, "\psi", swap] \ar[r, "\varphi"] & \Tr(\check{\mc{X}}^{\ulambda,*})
        \end{tikzcd}
    \end{equation}

    \noindent coming from the natural projection from $M(\ulambda)$ onto $\WW(\ulambda)$ and the map in Proposition~\ref{prop:PhiBimod} respectively. By Theorem~\ref{thm:WeylFree} and Corollary~\ref{Cor:TraceFree}, all three of these $\bA_{\ulambda}$-modules are graded free, so $\Ker(\varphi)$ and $\Ker(\psi)$ are also free (they are flat and so free since $\bA_{\ulambda}$ is graded local).
    
    By Proposition~\ref{Prop:IsoGenericPoint}, these factor through an isomorphism at the generic point:
    \begin{equation}
    \begin{tikzcd} 
        & M(\ulambda)\otimes _{\Pi^{\otimes n}}\KK \ar[ld, "\psi\otimes \KK", swap] \ar[rd, "\varphi\otimes \KK"] & \\
        \WW(\ulambda)\otimes_{\bA_{\ulambda}}\KK\ar[rr] & & \Tr(\check{\mc{X}}^{\ulambda,*})\otimes_{\bA_{\ulambda}}\KK
        \end{tikzcd}
    \end{equation}
    
    \noindent So $\Ker(\varphi\otimes \KK)=\Ker(\psi\otimes \KK)$. Freeness implies that $\Ker(\varphi)=\Ker(\psi)$ and so $\varphi$ factors through an isomorphism from $\WW(\ulambda)$ as claimed.
\end{proof}


\begin{thebibliography}{BHLW17}

\bibitem[BGHL14]{BGHL14}
Anna Beliakova, Zaur Guliyev, Kazuo Habiro, and Aaron~D. Lauda.
\newblock Trace as an alternative decategorification functor.
\newblock {\em Acta Math. Vietnam.}, 39(425):425--480, 2014.

\bibitem[BHLW16]{BHLW16}
Anna Beliakova, Kazuo Habiro, Aaron~D. Lauda, and Ben Webster.
\newblock Cyclicity for categorified quantum groups.
\newblock {\em J. Algebra}, 452:118--132, 2016.

\bibitem[BHLW17]{BHLW17}
Anna Beliakova, Kazuo Habiro, Aaron~D. Lauda, and Ben Webster.
\newblock Current algebras and categorified quantum groups.
\newblock {\em J. Lond. Math. Soc. (2)}, 95(1):248--276, 2017.

\bibitem[BHL{\v Z}17]{BHLZ14}
Anna Beliakova, Kazuo Habiro, Aaron~D. Lauda, and Marko {\v Z}ivkovi{\'c}.
\newblock Trace decategorification of categorified quantum $\mathfrak{sl}(2)$.
\newblock {\em Math. Ann.}, 367:397--440, 2017.

\bibitem[BK09]{BK09}
Jonathan Brundan and Alexander Kleshchev.
\newblock Blocks of cyclotomic {H}ecke algebras and {K}hovanov-{L}auda
  algebras.
\newblock {\em Invent. Math.}, 178(3):451--484, 2009.

\bibitem[BN04]{BN04}
Jonathan Beck and Hiraku Nakajima.
\newblock Crystal bases and two-sided cells of quantum affine algebras.
\newblock {\em Duke Math. J.}, 123(2):335--402, 2004.

\bibitem[CFK10]{CFK10}
Vyjayanthi Chari, Ghislain Fourier, and Tanusree Khandai.
\newblock A categorical approach to {W}eyl modules.
\newblock {\em Transform. Groups}, 15(3):517--549, 2010.

\bibitem[CL06]{CL06}
Vyjayanthi Chari and Sergei Loktev.
\newblock Weyl, {D}emazure and fusion modules for the current algebra of
  {$\mathfrak{sl}_{r+1}$}.
\newblock {\em Adv. Math.}, 207(2):928--960, 2006.

\bibitem[CL15]{CL15}
Sabin Cautis and Aaron~D. Lauda.
\newblock Implicit structure in 2-representations of quantum groups.
\newblock {\em Selecta Math. (N.S.)}, 21(1):201--244, 2015.

\bibitem[CLLS18]{CLLS18}
Sabin Cautis, Aaron~D. Lauda, Anthony~M. Licata, and Joshua Sussan.
\newblock W-algebras from {H}eisenberg categories.
\newblock {\em J. Inst. Math. Jussieu}, 17(5):981--1017, 2018.

\bibitem[CP01]{CP01}
Vyjayanthi Chari and Andrew Pressley.
\newblock Weyl modules for classical and quantum affine algebras.
\newblock {\em Represent. Theory}, 5:191--223, 2001.

\bibitem[EL16]{EL16}
Ben Elias and Aaron~D. Lauda.
\newblock Trace decategorification of the {H}ecke category.
\newblock {\em J. Algebra}, 449:615--634, 2016.

\bibitem[FL07]{FL07}
Ghislain Fourier and Peter Littelmann.
\newblock Weyl modules, {D}emazure modules, {KR}-modules, crystals, fusion
  products and limit constructions.
\newblock {\em Adv. Math.}, 211(2):566--593, 2007.

\bibitem[KK12]{KK12}
Seok-Jin Kang and Masaki Kashiwara.
\newblock Categorification of highest weight modules via
  {K}hovanov-{L}auda-{R}ouquier algebras.
\newblock {\em Invent. Math.}, 190(3):699--742, 2012.

\bibitem[KL10]{KL10}
Mikhail Khovanov and Aaron~D. Lauda.
\newblock A categorification of quantum {${\rm sl}(n)$}.
\newblock {\em Quantum Topol.}, 1(1):1--92, 2010.

\bibitem[Lau10]{Lau10}
Aaron~D. Lauda.
\newblock A categorification of quantum {${\rm sl}(2)$}.
\newblock {\em Adv. Math.}, 225(6):3327--3424, 2010.

\bibitem[Lau12]{Lau12}
Aaron~D. Lauda.
\newblock An introduction to diagrammatic algebra and categorified quantum
  {$\mathfrak{sl}_2$}.
\newblock {\em Bull. Inst. Math. Acad. Sin. (N.S.)}, 7(2):165--270, 2012.

\bibitem[{Rou}08]{Rou08}
Rapha\"{e}l {Rouquier}.
\newblock {2-Kac-Moody algebras}.
\newblock {\em ArXiv e-prints}, December 2008.

\bibitem[Rou12]{Rou12}
Rapha\"{e}l Rouquier.
\newblock Quiver {H}ecke algebras and 2-{L}ie algebras.
\newblock {\em Algebra Colloq.}, 19(2):359--410, 2012.

\bibitem[{Web}12]{Web12}
Ben {Webster}.
\newblock {Weighted Khovanov-Lauda-Rouquier algebras}.
\newblock {\em ArXiv e-prints}, September 2012.

\bibitem[Web15]{Web15}
Ben Webster.
\newblock Canonical bases and higher representation theory.
\newblock {\em Compos. Math.}, 151(1):121--166, 2015.

\bibitem[{Web}16]{Web16}
Ben {Webster}.
\newblock {Unfurling Khovanov-Lauda-Rouquier algebras}.
\newblock {\em ArXiv e-prints}, March 2016.

\bibitem[Web17]{Web17}
Ben Webster.
\newblock Knot invariants and higher representation theory.
\newblock {\em Mem. Amer. Math. Soc.}, 250(1191):v+141, 2017.

\bibitem[{\v Z}iv14]{Ziv14}
Marko {\v Z}ivkovi{\'c}.
\newblock {Trace decategorification of categorified quantum sl(3)}.
\newblock {\em arXiv e-prints}, page arXiv:1405.2314, May 2014.

\end{thebibliography}
\end{document}